\newtheorem{assumption}{Assumption}
\DeclareMathOperator*{\minimize}{minimize~}
\DeclareMathOperator*{\argmin}{arg\,min}
\newcommand{\R}{\mathbb{R}}
\newtheorem{lemma}{Lemma}[section]
\newtheorem{corollary}{Corollary}[section]
\newtheorem{theorem}{Theorem}[section]
\newtheorem{proposition}{Proposition}[section]
\newtheorem{remark}{Remark}[section]
\newcommand{\rank}{{\rm rank}\,}
\newcommand{\beq}{\begin{equation}}
\newcommand{\eeq}{\end{equation}}
\newcommand{\beqa}{\begin{eqnarray}}
\newcommand{\eeqa}{\end{eqnarray}}
\newcommand{\beqas}{\begin{eqnarray*}}
\newcommand{\eeqas}{\end{eqnarray*}}
\newcommand{\bi}{\begin{itemize}}
\newcommand{\ei}{\end{itemize}}
\newcommand{\ba}{\begin{array}}
\newcommand{\ea}{\end{array}}
\newcommand{\nn}{\nonumber}
\def\eqnok#1{(\ref{#1})}
\def\vgap{\vspace*{.1in}}
\def\exp{{\rm exp}}
\newcommand{\bbe}{\Bbb{E}}
\newcommand{\bbr}{\Bbb{R}}
\def\w{\omega}
\def\tr{{\rm tr}}
\def\EE{{\mathbb{E}}}
\def\tr{{\mbox{Tr}}}
\title{Fully Zeroth-Order Bilevel Programming via Gaussian Smoothing}
\date{}
\author{Alireza Aghasi\thanks{Department of electrical engineering and computer science (EECS), Oregon State University; email: alireza.aghasi@oregonstate.edu. }~\textsuperscript{$\ddagger$}
\and Saeed Ghadimi\thanks{Department of management science and engineering; University of Waterloo; email: sghadimi@uwaterloo.ca \\ \indent ~\textsuperscript{$\ddagger$}The authors have equally contributed to this work}~\textsuperscript{$\ddagger$}}
\begin{document}
\maketitle

\begin{abstract}
In this paper, we study and analyze zeroth-order stochastic approximation algorithms for solving bilvel problems, when neither the upper/lower objective values, nor their unbiased gradient estimates are available. In particular, exploiting Stein's identity, we first use Gaussian smoothing to estimate first- and second-order partial derivatives of functions with two independent block of variables. We then use these estimates in the framework of a stochastic approximation algorithm for solving bilevel optimization problems and establish its non-asymptotic convergence analysis. To the best of our knowledge, this is the first time that sample complexity bounds are established for a fully stochastic zeroth-order bilevel optimization algorithm.

\end{abstract}

\section{Introduction}

In this paper, we study the stochastic bilevel programming (BLP) problem given by
\begin{align} \label{main_prob_st}
&\min_{x \in X} \left\{\psi(x):= f(x,y^*(x))=\bbe[F(x,y^*(x),\xi)]\right\} \nn \\
& \text{s.t.} \ \  y^*(x) = \argmin_{y \in \bbr^m} \left\{g(x,y)= \bbe[G(x,y,\zeta)]\right\},
\end{align}
where $f$ and $g$ are continuously differentiable functions in $(x,y) \in X \times \bbr^m$, $X \subseteq \bbr^n$ is a closed convex set, and the expectations are taken with respect to independent random vectors $\xi, \zeta$ whose probability distributions are supported on two subspaces with possibly different dimensions. We are particularly interested in the setting where neither explicit knowledge about $f,g$ are available nor their unbiased stochastic derivatives. In this zeroth-order setting, we assume that only noisy evaluations  of $f$ and $g$ are available upon query to an oracle.

The BLP problem was first introduced by Bracken and McGill in the 1970s~\cite{Bracken1973MathematicalPW} followed by a more general
form of problem involving joint constraints of outer and inner variables. This is a fundamental problem in engineering and economics with direct applications in problems such as decision making \cite{lu2016multilevel}, supply chain \cite{sadigh2012manufacturer, roghanian2007probabilistic}, network design \cite{marcotte1992efficient,leblanc1986bilevel}, transportation and planning \cite{clegg2001bilevel, yin2002multiobjective}, and optimal design \cite{bendsoe1995optimization, herskovits2000contact}. More recently, BLP has found applications in many areas of machine learning and artificial intelligence. 

Zeroth-order methods apply to many optimization problems (including the BLP) where for various reasons such as complexity, lack of access to an accurate model, or computational limitations, there is no or limited access to the objective gradient. In these situations the common practice is approximating the gradient using deterministic or randomized finite difference methods \cite{shi2021numerical, nesterov2017random}. The scope of application for these methods is  vastly diverse  (see the remainder of this section for a comprehensive summary of applications in machine learning, and see   \cite{zhang2021directional, aghasi2012joint, aghasi2013geometric, liu2020primer, hu2023gradient, van2022data} for examples in other areas of engineering).

The techniques and algorithms developed in this paper can be considered in several areas of machine learning and artificial intelligence. It is noteworthy that BLP is a very general problem and even some of the other popular programs such as min-max problems (used in adversarial and robust machine learning) can be cast as a BLP. In particular, the min-max problem $\min_{x\in X}~\max_{y\in Y}~f(x,y)$ can be reformulated in a bilevel form as:
\begin{align*} 
\minimize_{x \in X} ~ f(x,y^*(x)) 
 ~~~~~\text{subject to:} ~~~~ \ \  y^*(x) \in \argmin_{y \in Y} ~-f(x,y).
\end{align*}
This motivates a more comprehensive review of the literature related to BLP and zeroth-order optimization in the area of machine learning. A more detailed literature review highlighting this connection is presented below.

\paragraph{BLP in Machine Learning.} Probably, the most well-known application of BLP in modern machine learning is the meta-learning problem \cite{
franceschi2018bilevel, hospedales2021meta,  finn2017model,nichol2018first, chen2022meta}, where previously learned tasks help expedite and automate the learning of new tasks. A good meta-learning model generalizes well to new tasks that have not been encountered during the meta-training process, and learns the new tasks with many fewer training examples -- what is often referred to as \emph{learning to learn} \cite{hospedales2021meta}. Meta-learning can be presented in various forms, such as metric-based \cite{snell2017prototypical, vinyals2016matching}, model-based \cite{ mishra2018simple, santoro2016meta}, and optimization-based \cite{
franceschi2018bilevel, hospedales2021meta,  finn2017model,nichol2018first, chen2022meta}. 
In the context of supervised learning, 
the optimization-based meta-learning is normally handled through a BLP of the form \cite{hospedales2021meta, franceschi2018bilevel}: \vspace{-.3cm}
\begin{align*} 
    \minimize_{\lambda\in\Lambda} & ~~ \sum_{i=1}^M\mathcal{L}^{meta}_i\left(\theta^{*}_i(\lambda), \lambda, \mathcal{D}^{val}_i \right) \\[-.2cm] \mbox{subject to:}&~~  \left\{\theta_1^*(\lambda),\ldots, \theta_M^*(\lambda) \right\}= \argmin_{\{\theta_1,\ldots,\theta_M\}}~ \sum_{i=1}^M\mathcal{L}^{task}_i\left(\theta_i, \lambda, \mathcal{D}^{tr}_i \right).
\end{align*} 
Using $M$ available sample tasks, the ultimate goal of this program is training the models $\mathcal{M}_{\theta_i,\lambda}(.)$ for each task $i=1,\ldots,M$, 
where $\theta_i$ represents the task-specific parameters, and $\lambda$ represents the hyperparameters shared among all task models. The $i$-th sample task can be modeled by the triplet $(\mathcal{D}_i^{tr}, \mathcal{D}_i^{val}, \mathcal{L}_i)$, where $\mathcal{D}_i^{tr}$ and $\mathcal{D}_i^{val}$ are consonant datasets, with $\mathcal{D}_i^{tr}$ used to learn the task-specific parameters $\theta_i$, and $\mathcal{D}_i^{val}$ to learn the common hyperparameters $\lambda$. By sharing  $\lambda$ across all the task models, one hopes to expedite the training process for new tasks.

Beyond meta-learning, BLP is central to many other learning paradigms. In the hyperparameter optimization problem, the bilevel formulation allows a concurrent training of a machine learning model, as well as an optimal selection of the free parameters in the model \cite{bennett2008bilevel, franceschi2018bilevel, mackay2018selftuning}. Automated machine learning (AutoML), which aims to automate the manual steps of a machine learning process is another application of BLP. The BLP in this case often follows a leader/follower format, where the follower takes the optimal action that maximally benefits the
leader’s objective value  \cite{yao2018taking, hutter2019automated}. In continual learning, where one aims to learn continuously from a stream of tasks and data, sharing parameters between the continuous models is a way of updating them quickly, while not letting them forget the previous data patterns \cite{9533824, borsos2020coresets}. In both single-agent and multi-agent reinforcement learning, actor-critic-type methods use bilevel programming \cite{konda1999actor, yang2019provably, hong2020two}. In fact, as stated in \cite{qiu2021finite},  the actor-critic algorithm can be essentially viewed as a solution scheme for a class of BLPs. In multi-agent environments, a BLP formulation can help with coordinating the actions taken by the agents \cite{zhang2020bi}. 
In the neural architecture search problem (seeking to automate the processes of determining the optimal neural network architecture), differentiable architecture search methods, using a bilevel formulation, have shown superior performance and speed \cite{liu2018darts, lian2019towards, wang2020m}.
Clearly, developing a scalable solution strategy to the BLP problem not only contributes to major areas such as meta-learning and AutoML, it can also find immediate applications in subdivisions of machine learning that are application-specific.

\paragraph{Zeroth-Order Methods in Machine Learning.} In many machine learning problems, the stochastic versions of the gradient often replace its exact version for training and analysis purposes. Because of privacy or model complexity concerns, the access to such information can be limited, for which the use of stochastic/zeroth-order methods becomes prominent. In the realm of adversarial machine learning, zeroth-order optimization has become a systematic method for crafting adversarial examples from the so called \emph{black-box} models. Creating adversarial examples is usually done in a rather unrealistic white-box setting, where the attacker has full access to the target model. However, in a real-world setting and in the context of attacking a deployed machine learning model, the attacker can only communicate with the model by sending input queries, and receiving the output feedback, which define a black-box attack setting. Works such as \cite{chen2017zoo, tu2019autozoom, ilyas2018black} use zeroth-order techniques to construct such attacks. In contrast, works such as \cite{zhang2021robustify} use zeroth-order methods to robustify black-box models against such attacks. This specifically applies to models that share no information with the outside world for privacy reasons, and yet need to become resilient to adversarial attacks. 

Zeroth-order techniques can reveal more information about black-box models. 
For example in \cite{dhurandhar2019model, dhurandhar2022model}, the authors use zeroth-order optimization to make black-box classification models explainable and interpretable. Their method only requires class probabilities for a given input and can be applied to a variety of black-box models such as deep neural networks, random forests, and boosted trees. 

Gradient-free optimization, and in particular zeroth-order methods are also considered in meta learning problems which as stated above, often offer an explicit BLP formulation. In \cite{du2019query}, the authors propose a ``meta attack'' approach to reduce the number of queries needed for a black-box attack. In \cite{ruan2019learning}, the authors apply the zeroth-order optimization to the learning to learn (L2L) problem (see \cite{chen2022learning} for a comprehensive review of L2L), where the design of an optimization algorithm is learned through a deep neural network. They use a recurrent neural network as the learned optimizer, which initially provides a gradient approximate through a zeroth-order method, and then uses the previous iterations to produce parameter updates.  Song et al. \cite{song2019maml} tackle the model-agnostic meta-learning problem (e.g., see the seminal paper \cite{finn2017model}) through evolution strategies and zeroth-order techniques that involve finite difference approximations, or Gaussian smoothing \cite{nesterov2017random}. It is also possible to meta-learn zeroth-order optimizers by training a model that rather than learning the optimizer states such as the gradient, learns evaluations of the inner loop in the meta-learning bilevel program \cite{chen2017learning}. 

Zeroth-order methods are applicable to many other areas of machine learning including but not limited to transfer learning  of black-box models \cite{tsai2020transfer}, action space exploration in reinforcement learning \cite{vemula2019contrasting}, neural architecture search \cite{wang2022zarts, xie2023zo}, hyperparameter tuning \cite{li2021zeroth}, and federated learning \cite{fang2022communication, chen2023fine}. The reader is referred to \cite{liu2020primer} for a more comprehensive review of zeroth-order methods in machine learning. 

\paragraph{Related Works and Our Solution Strategy.}Solving a bilevel program can be computationally challenging, especially in large machine learning problems that involve millions or billions of parameters \cite{colson2007overview}. In particular, to compute the gradient of the upper-level objective function, one needs an optimal solution to the lower level objective function which is not readily available. Thus, an approximation of this solution will result in an additional error term. Bounding the accumulation of this error is one of the most challenging parts in designing bilevel optimization algorithms, specifically when obtaining first-order information of the objective functions is itself computationally expensive.

The first general  approach for solving BLPs is to reduce the bilevel problem to a single level problem by replacing the lower
optimization problem with its optimality conditions as constraints (see e.g., \cite{Hansen1992NewBR,Shi2005AnEK}). When the lower problem is large-scale, the reduced
problem will have too many constraints. Moreover, without imposing a special structure on $g$, its optimality conditions usually make the feasible set of the reduced problem nonconvex. In addition, the complementary constraints are combinatorial in nature, which results in a
mixed integer programming problem. Due to these shortcomings, a second approach has been proposed to use iterative algorithms for directly solving a bilevel optimization
problem. Penalty methods solving penalized lower objective function (see e.g., \cite{Aiyoshi1980HIERARCHICALDS,Case1998AnLP}), descent methods using approximate descent feasible directions (see
e.g., \cite{Falk1995OnBP,Kolstad1990DerivativeEA}), and trust region methods with mixed integer from of subproblems (see e.g., \cite{Colson2005ATM}) are among these methods.

The main body of existing works on bilevel optimization have focused on developing (stochastic) methods which require (stochastic) second-order derivatives of the inner function $g$ (e.g., \cite{GhadWang18, chen2021closing,ji2021provably,dagrou2022a,chen2023optimal}). More recently, fully (stochastic) first-order methods have been developed that only require first-order derivatives of both upper and lower objective functions (\cite{Ye2022BOMEBO,kwon2023fully}). However, to the best of our knowledge, there are only few works considering (partially) derivative-free approaches for solving bilevel optimizations. 

In \cite{conn2012bilevel, ehrhardt2021inexact} the authors focus on solving a quadratic belief model, and their approach is mainly applicable to the deterministic setting (non-stochastic and hence not scalable). As a classical approach for zeroth-order optimization, one can approximate the functions by convolution with some probability density functions such as uniform over a ball or Gaussian distributions. It can be shown that the gradient of this approximation function is calculated by the values of the original function. This property is useful to provide a biased gradient estimator for $f$ and $g$ in \eqnok{main_prob_st} by only using their function values. In \cite{Gu-etal2021}, the authors use the Gaussian convolution approach for the upper level function while assuming a good solution to the lower level problem is available through an optimization algorithm such as (proximal) gradient method. However, no convergence analysis is provided. On the other hand, Jacobian/Hessian-free methods have been developed recently for solving bilevel optimization problems which do not require any second-order information about the lower function $g$ while still using the first-order derivatives of both $f$ and $g$ (e.g., \cite{sow2022convergence,yang2023achieving}).

In all of the above-mentioned methods, some first-order information of $f$ or $g$ is used. In this paper, we focus on Gaussian convolution to approximate these functions and provide the first fully zeroth-order method to solve bilevel optimization problems. This setting also occurs in several applications in which function evaluations are only estimated via simulation. Our contribution in this paper consists of the following aspects. First, we generalize Gaussian convolution technique to the functions with two block-variables and establish all corresponding relationships between such functions and their smooth Gaussian approximations. Using two independent smoothing parameters for each block, will give us the flexibility of exploiting zeroth-order derivative estimates over just one block. To the best of our knowledge, this is the first time that such properties have been established for a function of two block-variables. Second, we provide the first fully zeroth-order stochastic approximation method for solving bilevel optimization problems. Unlike the existing works, we do not assume the availability of unbiased first/second order derivatives either for the upper level objective function or the lower level one. 
We also provide a detailed non-asymptotic convergence analysis of our proposed method and present sample complexity results.
To the best of our knowledge, these are the first established sample complexity bounds for a fully zeroth-order method proposed for solving stochastic bilevel optimization problems.

The remainder of the paper is organized as follows. First, we lay out the necessary assumptions and notations in the remaining part of the introduction. Then, to develop the required analysis tools, Section \ref{sec:second} focuses on the application of Gaussian smoothing techniques to functions with two block-variables. In this regard, estimates of the first and second-order partial derivatives are presented and the discrepancies with their true values are carefully bounded in terms of the problem inputs, such as the dimension and smoothing parameters. This part is also independently usable for future developments that apply Gaussian smoothing techniques to functions of two block-variables. In Section \ref{sec:third}, our focus shifts to the zeroth-order approximation of the bilevel optimization in \eqnok{main_prob_st}. We address problems such as the approximation error and the efficient evaluation of the gradient of the upper-level objective function.  Section \ref{sec:four} is devoted to the presentation of our proposed solution algorithm, utilizing tools and results from Sections \ref{sec:second} and \ref{sec:third} to analyze the inner and outer loops of our bilevel programming scheme. We provide sample complexity results pertaining the overall algorithm performance. Finally, some concluding remarks are presented in Section \ref{sec:conc}. To abide by the page limit, the proofs of all the technical results in Sections \ref{sec:second} and \ref{sec:third} are moved to the Appendix. The proofs related to Section \ref{sec:four} are all referenced and presented within the section.

\subsection{Main Assumptions}
As a general assumption and to avoid degenerate cases, throughout the paper we assume $m,n\geq 1$. For the sake of analysis, we need to make some other assumptions about problem \eqref{main_prob_st} that hold throughout the paper.

\begin{assumption}\label{fg_assumption}
The following statements hold for the functions $f$ and $g$.
\begin{itemize}
    \item [a)] The function $f$ is Lipschitz continuous. 
    
     \item [b)] The function $f$ has Lipschitz continuous gradients.
    
     \item [c)] The function $g$ is twice differentiable in $(x,y)$ with Lipschitz continuous gradient and Hessian.
     
     \item [d)] For any fixed $ x \in X$, $g(x,\cdot)$ is strongly convex whose modulus does not depend on $x$.
\end{itemize}
\end{assumption}

We will use parts of the above assumption in different results as will be explained later. Since, we are dealing with the zeroth-order, we need to make the following assumptions about the stochastic estimators of $f$ and $g$.
\begin{assumption}\label{stochastic_assumption}
The stochastic functions $F(x,y,\xi)$ and $G(x,y,\zeta)$ are respectively differentiable and twice differentiable in $(x,y) \in \bbr^{n \times m}$, and the statements of Assumption~\ref{fg_assumption} hold for them as well. Moreover, $\nabla_x F(x,y,\xi)$, $\nabla_y F(x,y,\xi)$, $\nabla_y G(x,y,\zeta)$, $\nabla^2_{xy} G(x,y,\zeta)$, $\nabla^2_{yy} G(x,y,\zeta)$ are unbiased estimators with bounded variance for the true gradients and Hessian of $f$ and $g$.
\end{assumption}
We need to emphasize that the unbiased stochastic gradients of $f$ and $g$ in the above assumption are assumed to be inaccessible throughout the paper and the above assumptions are solely made for analysis purposes. We also need the following assumption to establish the convergence analysis of our proposed method.
\begin{assumption}\label{bnd_yx}
The optimal solution to the lower problem is bounded i.e., $\max_{x \in X} \|y^*(x)\|$ is bounded.
\end{assumption}

\subsection{Notation and Definitions} The vertical concatenation of two vectors $x_1\in\mathbb{R}^{d_1}$ and $x_2\in\mathbb{R}^{d_2}$ (i.e., $[x_1^\top,x_2^\top]^\top$) which is a vector in $\mathbb{R}^{d_1+d_2}$ would be denoted by $(x_1,x_2)$. We reserve $I$ for the identity matrix. 

\paragraph{Smoothness.} Different orders of smoothness are considered in this paper. We use $L_{i,q}$ to show the (expected) Lipschitz constant of $i$-th derivative of a (stochastic) function $q$, where $i=0$ corresponds to the function itself. More specifically,
\begin{itemize}
    \item[--]  $q\in \mathcal{C}^0(S;L_{0,q})$, if for $z,z'\in S$, $|q(z) - q(z')|\leq L_{0,q}\|z-z'\|$.
    \item[--] $q\in \mathcal{C}^1(S;L_{1,q})$, if for $z,z'\in S$, $\|\nabla q(z) - \nabla q(z')\|\leq L_{1,q}\|z-z'\|$, or equivalently,
    \begin{equation*}
        \left |q(z') - q(z) -\left \langle \nabla q(z), z'-z\right \rangle\right|\leq \frac{L_{1,q}}{2}\|z-z'\|^2.
    \end{equation*}
   \item[--] $q\in \mathcal{C}^2(S;L_{2,q})$, if for $z,z'\in S$, $\|\nabla^2 q(z) - \nabla^2 q(z')\|\leq L_{2,q}\|z-z'\|$, or equivalently,
    \begin{equation*}
        \left|q(z') - q(z) -\langle \nabla q(z), z'-z\rangle- \frac{1}{2}\langle \nabla^2 q(z)(z'-z), z'-z\rangle\right |\leq \frac{L_{2,q}}{6}\|z-z'\|^3.
    \end{equation*}
\end{itemize}

\paragraph{Strong Convexity.} We use $\lambda_q$ to show the strong convexity modulus of $q$. More specifically, for a convex domain $S$, when $q\in \mathcal{C}^1(S,\cdot)$: 
\begin{equation*}
   q(z')-q(z)-\langle\nabla q(z),z'-z \rangle \geq \frac{\lambda_q}{2}\|z-z'\|^2, ~~ z,z'\in S,
\end{equation*}
and when  $q\in \mathcal{C}^2(S,\cdot)$: 
\begin{equation*}
    \nabla^2 q(z)\succeq \lambda_q I, ~~ z\in S. 
\end{equation*}
\paragraph{Stochastic Moments.} For a stochastic function $Q(z,\zeta)$, with a random parameter $\zeta$, we use $M_{i,Q}^2$ and $\sigma_{i,Q}^2$ to show global upper bounds on the second moment and variance of its $i$-th derivative, respectively. For example, in this case bounded $M_{0,Q}^2$ and $\sigma_{2,Q}^2$ imply that for all $z$:
\[
\EE_\zeta \left[ Q^2(z,\zeta) \right]\leq M_{0,Q}^2, ~~\mbox{and}~~ \EE_\zeta \left\| \nabla_z^2 Q(z,\zeta) - \EE_\zeta \left[\nabla_z^2 Q(z,\zeta)\right]\right\|_F^2\leq \sigma_{2,Q}^2.
\]
As a generalization of the variance, for a random vector $x$, we define the notation
\[
\mathbb{V}[x] := \EE \|x-\EE[x]\|^2 = \EE\|x\|^2 - \left\|\EE x\right\|^2.
\]

\section{Gaussian Smoothing for Functions of Two Block-Variables }\label{sec:second}
In this section, we collect some facts about the first and second derivative of a given function using only function evaluations.
\subsection{Stein's Identity and Zeroth-Order Smooth Approximation}\label{sec:stein}
To provide a zeroth-order smooth approximation of a function, we first review the Stein's identity theorem \cite{stein1981estimation, stein1972bound}. 
\begin{theorem}\label{thStein}
Let $w\sim\mathcal{N}(0,I_d)$, be a standard Gaussian random vector, and let $q:\mathbb{R}^d\to\mathbb{R}$, be an almost-differentiable function \footnote{See \cite{stein1981estimation} for a definition of almost differentiable}, with $\mathbb{E}[\|\nabla q\|]<\infty$, then:
\begin{equation}\label{stein1}
    \mathbb{E}[w~\!q(w)] = \mathbb{E}[\nabla q(w)].
\end{equation}
Furthermore, when the function $q$ has a twice continuously differentiable Hessian, we have
\begin{equation}\label{stein1}
    \mathbb{E}\left[\left( w w^\top - I_d\right)q(w) \right]  = \mathbb{E}\left[\nabla^2 q(w)\right].
\end{equation}
\end{theorem}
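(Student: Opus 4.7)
The plan is to prove both identities by integration by parts against the standard Gaussian density $\phi(w) = (2\pi)^{-d/2}\exp(-\|w\|^2/2)$. The engine of the argument is the pair of elementary identities $\nabla\phi(w) = -w\,\phi(w)$ and $\nabla^2\phi(w) = (ww^\top - I_d)\,\phi(w)$, both of which follow from direct differentiation of $\phi$. These let us rewrite the multiplicative factors $w$ and $ww^\top - I_d$ that appear inside the Stein expectations as derivatives of $\phi$, which we can then shift onto $q$ via integration by parts.

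Concretely, for the first identity I would write
$$\mathbb{E}[w\,q(w)] \;=\; \int_{\mathbb{R}^d} q(w)\,w\,\phi(w)\,dw \;=\; -\int_{\mathbb{R}^d} q(w)\,\nabla\phi(w)\,dw$$
and integrate by parts componentwise. The super-exponential decay of $\phi$ drives the boundary terms to zero, while the integrability hypothesis $\mathbb{E}[\|\nabla q\|] < \infty$ ensures absolute convergence of the resulting integral $\int \nabla q(w)\,\phi(w)\,dw = \mathbb{E}[\nabla q(w)]$. For the second identity, the same recipe is applied twice: starting from $\int q(w)\,\nabla^2\phi(w)\,dw$, a first integration by parts contracts $\nabla\phi$ with $\nabla q$, and a second pass produces $\int \nabla^2 q(w)\,\phi(w)\,dw = \mathbb{E}[\nabla^2 q(w)]$.

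The main technical subtlety lies in justifying integration by parts under the weak \emph{almost-differentiability} hypothesis of the first identity, since $\nabla q$ need not be a classical pointwise derivative. The standard workaround, which is essentially the one Stein used, is to mollify $q$ into a smooth approximation $q_\varepsilon = q \ast \eta_\varepsilon$, apply the classical identity to $q_\varepsilon$, and pass to the limit $\varepsilon \to 0^+$ using the Gaussian weight together with the $L^1(\phi\,dw)$ bound on $\|\nabla q\|$ to invoke dominated convergence on both sides. The second identity requires no such care: the $\mathcal{C}^2$ regularity makes all derivatives classical, Fubini is directly applicable, and the only bookkeeping needed is that the first- and second-order boundary contributions vanish against the Gaussian tail for any polynomially bounded $q$.
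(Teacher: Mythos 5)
Your argument is essentially correct, but note that the paper never proves this statement at all: Theorem \ref{thStein} is quoted as a known result and attributed directly to Stein's original papers, so there is no in-paper proof to compare against. Your integration-by-parts route, based on $\nabla\phi(w) = -w\,\phi(w)$ and $\nabla^2\phi(w) = (ww^\top - I_d)\phi(w)$ together with mollification to handle almost-differentiability, is the standard classical proof and would be a legitimate way to make the theorem self-contained. Two caveats. First, for the second identity you quietly introduce a polynomial-boundedness assumption on $q$ to kill the boundary terms; this is not in the theorem's hypotheses (which, as stated, only ask for a twice continuously differentiable Hessian and are themselves somewhat incomplete regarding integrability, e.g.\ one implicitly needs $\mathbb{E}\left[\left\|\nabla^2 q(w)\right\|\right]<\infty$ and suitable decay of $q\,\nabla\phi$ and $\phi\,\nabla q$ at infinity), so you should flag it as an added regularity condition rather than free bookkeeping. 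Second, a cleaner way to get the second-order identity is to apply the first-order identity twice instead of integrating by parts twice: applying \eqref{stein1} to the function $w_j q(w)$ gives $\mathbb{E}[w_i w_j q(w)] - \delta_{ij}\mathbb{E}[q(w)] = \mathbb{E}[w_j \partial_i q(w)]$, and applying it once more to $\partial_i q$ gives $\mathbb{E}[w_j \partial_i q(w)] = \mathbb{E}[\partial_j \partial_i q(w)]$, which assembles into $\mathbb{E}[(ww^\top - I_d)q(w)] = \mathbb{E}[\nabla^2 q(w)]$ without having to re-justify boundary terms at second order.
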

Considering the block structure $w = (u,v)$, where $u\sim\mathcal{N}(0,I_n)$, and $v\sim\mathcal{N}(0,I_m)$, Theorem \ref{thStein} extends to the following results for the partial derivatives of $q(u,v)$:
\begin{equation}
    \mathbb{E}[u~\! q(u,v)] = \mathbb{E}[\nabla_u q(u,v)], ~~~~~ \mathbb{E}[v~\! q(u,v)] = \mathbb{E}[\nabla_v q(u,v)].
\end{equation}
Moreover, focusing on the diagonal blocks of $\nabla^2 q(w)$, one gets
\begin{equation}
    \mathbb{E}\left[\left( u u^\top - I_n\right)q(u,v) \right]  = \mathbb{E}\left[\nabla^2_{uu} \ q(u,v)\right], ~~~~~  \mathbb{E}\left[\left( v v^\top - I_m\right)q(u,v) \right]  = \mathbb{E}\left[\nabla^2_{vv} \ q(u,v)\right],
\end{equation}
and the off-diagonal blocks offer the identity
\begin{equation}
    \mathbb{E}\left[ u v^\top q(u,v) \right]  = \mathbb{E}\left[\nabla^2_{uv} \ q(u,v)\right], ~~~~~  \mathbb{E}\left[  v u^\top q(u,v) \right]  = \mathbb{E}\left[\nabla^2_{vu} \ q(u,v)\right].
\end{equation}

Letting $w\sim\mathcal{N}(0,I_d)$, one way of obtaining a smooth approximation of a given possibly non-smooth function $q(z)$, is through the following Gaussian convolution (e.g., \cite{nesterov2017random}):
\begin{equation}
    q_\nu(z) \triangleq \mathbb{E}_w [q(z+\nu w)],
\end{equation}
where $\nu\in(0,\infty)$ is called the smoothing parameter (the approximation trivially becomes exact when $\nu=0$). Using $\hat q(w) = q(z+\nu w)$ in the Stein's Theorem \ref{thStein}, one can immediately verify that
\begin{align}\notag 
     \nabla q_\nu(z) &= \mathbb{E}_w \left[  \frac{q(z+\nu w)}{\nu} \ w\right]\\ & = \mathbb{E}_w \left[  \frac{q(z+\nu w)-q(z)}{\nu} \ w\right].
\end{align}
Also, for the Hessian of the smooth approximation we get
\begin{align}\label{HessApprox1}
     \nabla^2 q_\nu(z) &= \mathbb{E}_w \left[  \left( ww^\top -I_d\right)\frac{q(z+\nu w)}{\nu^2} \ \right]\\ \label{HessApprox2} & = \mathbb{E}_w \left[  \left( ww^\top -I_d\right) \frac{q(z+\nu w) + q(z-\nu w)-2q(z)}{2\nu^2} \right].
\end{align}
Compared to \eqref{HessApprox1}, the Hessian formulation \eqref{HessApprox2} provides a more controllable way of bounding the second moments when $\nu\to 0$, and would be henceforth used.   

Considering the block structure $z = (x,y)$, one can also consider a smooth approximation to $q(x,y)$ as
\begin{equation}
    q_{\eta,\mu}(x,y) \triangleq \mathbb{E}_{u,v} [q(x+\eta u, y+\mu v)],
\end{equation}
where $u\sim\mathcal{N}(0,I_n)$, $v\sim\mathcal{N}(0,I_m)$, and $\eta\geq 0$ and $\mu\geq 0$ control the level of smoothness along the $x$ and $y$ directions, respectively. The following result states that the main convexity and smoothness characteristics of $q$ are preserved under the proposed smooth approximation. 

\begin{proposition}\label{general_prop}
Consider $x\in\mathbb{R}^n$, $y\in \mathbb{R}^m$, and $z = (x,y)\in \mathbb{R}^{n+m}$. For a given function $q: \mathbb{R}^{n+m}\to \mathbb{R}$, and non-negative scalars $\eta$ and $\mu$:
\begin{itemize}
    \item[(a)] If $q(z)$ is convex (strongly convex with modulus $\lambda_q$), $q_{\eta,\mu}(z)$ is also convex (strongly convex with modulus $\lambda_{q_{\eta,\mu}} = \lambda_q$). 
    \item[(b)]  Assume that $q\in \mathcal{C}^i(\mathbb{R}^{n+m};L_{i,q})$ for a smoothness order $i\in \{0,1,2\}$. Then $q_{\eta,\mu}\in \mathcal{C}^i(\mathbb{R}^{n+m};L_{i,q})$ for the same smoothness order $i$.

\end{itemize}
\end{proposition}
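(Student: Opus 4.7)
The plan is to realize $q_{\eta,\mu}$ as an average of translates of $q$ and then push each structural property through the expectation by linearity and Jensen's inequality. Specifically, letting $w = (\eta u, \mu v)$ with $u\sim\mathcal{N}(0,I_n), v\sim\mathcal{N}(0,I_m)$, we have $q_{\eta,\mu}(z) = \mathbb{E}_w[q(z+w)]$, and $\mathbb{E}[w]=0$. This single representation drives the entire argument.

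For part (a), convexity of $q$ implies that for each fixed realization of $w$ the map $z \mapsto q(z+w)$ is convex, and taking expectation preserves this. For strong convexity, I would reduce to the convex case by considering $\tilde q(z) := q(z) - \frac{\lambda_q}{2}\|z\|^2$, which is convex. Expanding
\begin{equation*}
q_{\eta,\mu}(z) \;=\; \mathbb{E}_w\bigl[\tilde q(z+w)\bigr] + \frac{\lambda_q}{2}\,\mathbb{E}_w\bigl[\|z+w\|^2\bigr],
\end{equation*}
and using $\mathbb{E}_w[w]=0$ to get $\mathbb{E}_w\|z+w\|^2 = \|z\|^2 + \mathbb{E}_w\|w\|^2$, the last term becomes $\frac{\lambda_q}{2}\|z\|^2$ up to a $z$-independent constant. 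Since $\mathbb{E}_w[\tilde q(z+w)]$ is convex in $z$ by the first part, $q_{\eta,\mu}(z) - \frac{\lambda_q}{2}\|z\|^2$ is convex, i.e., $\lambda_{q_{\eta,\mu}} = \lambda_q$.

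For part (b), the case $i=0$ follows directly from Jensen's inequality: $|q_{\eta,\mu}(z) - q_{\eta,\mu}(z')| \leq \mathbb{E}_w|q(z+w)-q(z'+w)| \leq L_{0,q}\|z-z'\|$, since translation by $w$ does not change $\|z-z'\|$. For $i=1$ and $i=2$, I would first justify passing derivatives inside the expectation: the Lipschitz bounds on $\nabla q$ (respectively $\nabla^2 q$) together with the Gaussian density's integrability provide a dominating function, so dominated convergence yields $\nabla q_{\eta,\mu}(z) = \mathbb{E}_w[\nabla q(z+w)]$ and $\nabla^2 q_{\eta,\mu}(z) = \mathbb{E}_w[\nabla^2 q(z+w)]$. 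Then, for any $z,z'$,
\begin{equation*}
\bigl\|\nabla^i q_{\eta,\mu}(z) - \nabla^i q_{\eta,\mu}(z')\bigr\| \;\leq\; \mathbb{E}_w\bigl\|\nabla^i q(z+w) - \nabla^i q(z'+w)\bigr\| \;\leq\; L_{i,q}\,\|z-z'\|,
\end{equation*}
giving $q_{\eta,\mu} \in \mathcal{C}^i(\mathbb{R}^{n+m};L_{i,q})$ with the same constant.

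The conceptually interesting step is the strong convexity argument, where the vanishing of $\mathbb{E}[w]$ is exactly what guarantees that no cross term $\lambda_q z^\top \mathbb{E}[w]$ appears to degrade the quadratic lower curvature. The only mildly technical obstacle is the differentiation-under-the-integral step needed in part (b), but given Assumption \ref{fg_assumption} and the decay of the Gaussian measure it is a standard application of dominated convergence and requires no delicate estimate; no smoothness properties are lost in the smoothing, which is the key qualitative content of the proposition.
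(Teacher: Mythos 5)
Your proof is correct, and for part (b) it is essentially the paper's argument: Jensen's inequality plus the translation-invariance of $\|z-z'\|$ for $i=0$, and interchange of $\nabla^i$ with the expectation followed by Jensen for $i=1,2$ (you are in fact somewhat more careful than the paper here, since you explicitly justify the interchange via domination, whereas the paper simply asserts it for the gradient case and skips the Hessian case; the paper itself later remarks, in the proof of its Proposition 2.2, that this interchange is not automatic for improper Gaussian integrals, so your extra care is welcome rather than redundant). The only genuine divergence is in part (a): the paper verifies the strong-convexity inequality directly along convex combinations, replacing $(x,x')$ by $(x+\eta u, x'+\eta u)$ and $(y,y')$ by $(y+\mu v, y'+\mu v)$ and taking expectations, which works because the translation leaves $\|x-x'\|$ and $\|y-y'\|$ unchanged; you instead use the equivalent characterization that $q$ is $\lambda_q$-strongly convex iff $q-\frac{\lambda_q}{2}\|\cdot\|^2$ is convex, and expand $\mathbb{E}_w\|z+w\|^2$. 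Both are valid and of comparable length; the paper's route needs no moment information about $w$ at all, and in fact your emphasis on $\mathbb{E}[w]=0$ is an overstatement of its role, since a nonzero mean would only contribute a linear term $\lambda_q z^\top \mathbb{E}[w]$, which does not affect convexity of $q_{\eta,\mu}(z)-\frac{\lambda_q}{2}\|z\|^2$ either — so the conclusion is robust, just not for the reason you highlight.
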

\begin{proof}
    See Section \ref{general_prop:proof} of the Supplement. 
\end{proof}

Using a similar line of argument as above and Section \ref{sec:stein}, the first order partial derivatives of  $q_{\eta,\mu}(x,y)$ can be acquired as
\begin{align}
     \nabla_x q_{\eta,\mu}(x,y)  &= \mathbb{E}_{u,v} \left[  \frac{q(x+\eta u, y+\mu v) - q(x, y)}{\eta} \ u\right],\label{gradx}\\
     \nabla_y q_{\eta,\mu}(x,y) & = \mathbb{E}_{u,v} \left[  \frac{q(x+\eta u, y+\mu v) - q(x, y)}{\mu} \ v\right].\label{grady}
\end{align}
Similarly, for the second order partial derivatives we have
\begin{align}
     \nabla^2_{xx} q_{\eta,\mu}(x,y) &=  \mathbb{E}_{u,v} \left[  \left( uu^\top -I_n\right) \frac{q(x+\eta u, y+\mu v) + q(x-\eta u,y-\mu v)-2q(x,y )}{2\eta^2} \right],\label{Hessian_x}\\
     \nabla^2_{yy} q_{\eta,\mu}(x,y) &=  \mathbb{E}_{u,v} \left[  \left( vv^\top -I_m\right) \frac{q(x+\eta u, y+\mu v) + q(x-\eta u,y-\mu v)-2q(x,y)}{2\mu^2} \right],\label{Hessian_y}
\end{align}
and 
\begin{align}
     \nabla^2_{xy} q_{\eta,\mu}(x,y) &=  \mathbb{E}_{u,v} \left[   uv^\top  \frac{q(x+\eta u, y+\mu v) + q(x-\eta u,y-\mu v) -2q(x,y)}{\eta\mu} \right]\label{Hessian_xy2}.
\end{align}
Depending on the smoothness level of $q$, the approximation errors of $q_{\eta,\mu}$ and $\nabla q_{\eta,\mu}$ can be bounded, as stated next. For these set of results we repeatedly appeal to the following lemma from \cite{nesterov2017random}. 
\begin{lemma}\label{lemmaExpectationNormPower}
Consider $u\sim\mathcal{N}(0,I_n)$. Then
\begin{itemize}
    \item[--] for $p\in[0,2]$, $$\EE\|u\|^p\leq n^{p/2},$$
    \item[--] for $p\geq 2$, 
    \[
    n^{p/2}\leq \EE\|u\|^p\leq (n+p)^{p/2}.
    \]
\end{itemize}
\end{lemma}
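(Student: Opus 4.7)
The plan is to reduce everything to properties of $X = \|u\|^2$, which is $\chi^2_n$-distributed with $\EE X = n$, and then handle the three sub-claims with a common workhorse. For the upper bound when $p \in [0,2]$, I would apply Jensen's inequality to the concave function $t \mapsto t^{p/2}$ on $[0,\infty)$, which immediately gives $\EE\|u\|^p = \EE X^{p/2} \leq (\EE X)^{p/2} = n^{p/2}$. The matching lower bound when $p \geq 2$ is entirely symmetric: the same identity with the now convex $t \mapsto t^{p/2}$ yields $\EE\|u\|^p \geq (\EE X)^{p/2} = n^{p/2}$, again by Jensen. Neither step needs anything beyond convexity/concavity of a single power function and the elementary moment $\EE X = n$.

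The main step, and the one I expect to be the real work, is the upper bound for $p \geq 2$. My route is to use the closed-form chi moment
\[
\EE\|u\|^p = \frac{2^{p/2}\,\Gamma\bigl((n+p)/2\bigr)}{\Gamma(n/2)},
\]
which turns the claim into the Gamma-ratio inequality $\Gamma(x+s)/\Gamma(x) \leq (x+s)^s$ applied with $x = n/2$ and $s = p/2 \geq 1$. I would prove this via log-convexity of $\Gamma$: because the digamma function $\psi = \Gamma'/\Gamma$ is monotonically increasing,
\[
\log\Gamma(x+s) - \log\Gamma(x) = \int_x^{x+s}\psi(t)\,dt \leq s\,\psi(x+s),
\]
after which the classical bound $\psi(y) \leq \log y$ for $y > 0$ (itself a consequence of the integral representation $\log y - \psi(y) = \int_0^\infty \bigl(\tfrac{1}{1-e^{-t}} - \tfrac{1}{t}\bigr) e^{-yt}\,dt \geq 0$) delivers $\log \Gamma(x+s) - \log \Gamma(x) \leq s \log(x+s)$, which is exactly the claimed inequality.

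The hard part is therefore confined to the Gamma/digamma estimate; once it is in hand, substituting $x = n/2$ and $s = p/2$ back yields $\EE\|u\|^p \leq (n+p)^{p/2}$ directly. A slightly more elementary alternative for even integer $p = 2k$ is the telescoping identity $\EE X^k = n(n+2)\cdots(n+2k-2) \leq (n+2k-2)^k \leq (n+p)^{p/2}$, but extending this to arbitrary real $p \geq 2$ still requires a Gamma-function interpolation (a naive Lyapunov/H\"older bound overshoots by an additive constant in the exponent), so the digamma-based path is the cleanest self-contained route.
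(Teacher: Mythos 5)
Your proof is correct. Note, however, that the paper does not prove this lemma at all: it is imported verbatim from Nesterov and Spokoiny \cite{nesterov2017random} (their Lemma 1), so the only meaningful comparison is with that source. There, the two Jensen steps are the same as yours, but the $p\ge 2$ upper bound is obtained without any Gamma-function machinery: one bounds the integrand pointwise via $t^{p}e^{-\tau t^{2}/2}\le (p/(\tau e))^{p/2}$ for $\tau\in(0,1)$, absorbs the remaining factor $e^{-(1-\tau)\|u\|^{2}/2}$ into a rescaled Gaussian normalization (costing $(1-\tau)^{-n/2}$), and optimizes $\tau=p/(n+p)$, so that $\EE\|u\|^{p}\le \left((n+p)/e\right)^{p/2}\left(1+p/n\right)^{n/2}\le (n+p)^{p/2}$; this same splitting trick is in fact reused in the present paper's proof of Proposition~\ref{hessMomentBounds}. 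Your route instead goes through the exact chi moment $\EE\|u\|^{p}=2^{p/2}\Gamma((n+p)/2)/\Gamma(n/2)$ and the ratio bound $\Gamma(x+s)/\Gamma(x)\le (x+s)^{s}$, proved from monotonicity of the digamma function and $\psi(y)\le\log y$; every step checks out, and as a bonus your argument needs no restriction $s\ge 1$, so it gives the upper bound for all $p\ge 0$. The trade-off is that you lean on special-function facts (the chi moment formula and a digamma integral representation), whereas the cited proof is self-contained calculus; on the other hand your closed-form route is arguably sharper in spirit since it identifies exactly what is being estimated.

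One side remark in your write-up is off: the ``naive'' Lyapunov/H\"older interpolation does not overshoot. With $X=\|u\|^{2}$, $k=\lceil p/2\rceil$ and $\EE X^{k}=n(n+2)\cdots(n+2k-2)\le (n+2k-2)^{k}$, Lyapunov gives $\EE X^{p/2}\le(\EE X^{k})^{p/(2k)}\le (n+2k-2)^{p/2}\le (n+p)^{p/2}$, because $2\lceil p/2\rceil-2\le p$ always. So the elementary even-moment argument does extend to all real $p\ge 2$; this does not affect the validity of your main proof, but it means the digamma detour, while correct, is not actually forced.
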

We are now ready to state the discrepancy results. 


\begin{proposition}\label{propNestApprox}
Let $x\in\mathbb{R}^n$, $y\in \mathbb{R}^m$, and $z = (x,y)\in \mathbb{R}^{n+m}$. Consider function $q: \mathbb{R}^{n+m}\to \mathbb{R}$, and positive scalars $\eta$ and $\mu$.
\begin{itemize}
    \item[(a)] If $q\in \mathcal{C}^0(\mathbb{R}^{n+m};L_{0,q})$, we have $| q_{\eta,\mu}(x,y) - q(x,y)|\leq L_{0,q}\left(\eta^2n + \mu^2 m\right)^{\frac{1}{2}}$.
    
    \item[(b)] If $q\in \mathcal{C}^1(\mathbb{R}^{n+m};L_{1,q})$, we have $| q_{\eta,\mu}(x,y) - q(x,y)| \leq \frac{L_{1,q}}{2}\left(\eta^2n + \mu^2 m\right)$.
    Moreover, $\|\nabla q_{\eta,\mu}(x,y) - \nabla q(x,y)\| \leq U^{(1)}_x + U^{(1)}_y$, where 
    \begin{align*}
        U^{(1)}_x = \frac{L_{1,q}}{2}\left(\eta(n+3)^{\frac{3}{2}} + \frac{\mu^2}{\eta} m n^{\frac{1}{2}} \right), ~~~~U^{(1)}_y =\frac{L_{1,q}}{2}\left( \frac{\eta^2}{\mu} n m^{\frac{1}{2}}+ \mu(m+3)^{\frac{3}{2}} \right).
    \end{align*}
    \item[(c)] If $q\in \mathcal{C}^2(\mathbb{R}^{n+m};L_{2,q})$, we have $ \|\nabla q_{\eta,\mu}(x,y) - \nabla q(x,y)\| \leq  U^{(2)}_x + U^{(2)}_y$, where 
    \begin{align*}
        U^{(2)}_x = \frac{2L_{2,q}}{3}\left(\eta^2(n+4)^2 + \frac{\mu^3}{\eta} (m+3)^{\frac{3}{2}}n^{\frac{1}{2}}    \right), ~~~~U^{(2)}_y =\frac{2L_{2,q}}{3}\left( \frac{\eta^3}{\mu} (n+3)^{\frac{3}{2}}m^{\frac{1}{2}} +  \mu^2(m+4)^2  \right).
    \end{align*}
    Moreover, $\|  \nabla^2 q_{\eta,\mu}(x,y) - \nabla^2 q(x,y)\| \leq U^{(2)}_{xx} + U^{(2)}_{xy} + U^{(2)}_{yy}$,
    where
    \begin{align*}
    U^{(2)}_{xx} &= \frac{2 L_{2,q}}{3}\left( \eta (n+5)^{5/2} + \eta(n+3)^{3/2} + \frac{\mu^3}{\eta^2}(n+1)(m+3)^{3/2} \right), \\ U^{(2)}_{yy} &= \frac{2 L_{2,q}}{3}\left(\mu (m+5)^{5/2} + \mu(m+3)^{3/2} + \frac{\eta^3}{\mu^2}(m+1)(n+3)^{3/2}\right),\\ U^{(2)}_{xy} &= \frac{2 L_{2,q}}{3}\left(\frac{\eta^2}{\mu}(n+4)^2 m^{1/2} + \frac{\mu^2}{\eta}(m+4)^2 n^{1/2}\right).
    \end{align*}
\end{itemize}
\end{proposition}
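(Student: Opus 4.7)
The plan is to prove each of the three bounds by a suitable Taylor expansion of $q$ around $(x,y)$, exploiting the zero-mean and independence structure of the Gaussian vectors $u\sim\mathcal{N}(0,I_n)$ and $v\sim\mathcal{N}(0,I_m)$, and then controlling the remainders using Lemma~\ref{lemmaExpectationNormPower}. Part (a) is immediate: by $L_{0,q}$-Lipschitz continuity of $q$, we have $|q(x+\eta u, y+\mu v)-q(x,y)|\leq L_{0,q}\sqrt{\eta^2\|u\|^2+\mu^2\|v\|^2}$. Taking expectation, applying Jensen's inequality, and invoking Lemma~\ref{lemmaExpectationNormPower} with $p=2$ yields the claimed bound.

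For the first inequality in part (b), the $\mathcal{C}^1$ Taylor bound implies
\begin{equation*}
|q(x+\eta u, y+\mu v)-q(x,y)-\eta\langle\nabla_x q,u\rangle-\mu\langle\nabla_y q,v\rangle|\leq \tfrac{L_{1,q}}{2}(\eta^2\|u\|^2+\mu^2\|v\|^2).
\end{equation*}
Since $\mathbb{E}[u]=0$ and $\mathbb{E}[v]=0$, taking expectation eliminates the linear terms and the function value bound follows directly. For the gradient bound, the idea is to combine the identity $\nabla_x q(x,y)=\mathbb{E}[\,u\,\langle\nabla_x q(x,y),u\rangle\,]$ with formula~\eqref{gradx} to write
\begin{equation*}
\nabla_x q_{\eta,\mu}(x,y)-\nabla_x q(x,y)=\mathbb{E}\Bigl[\tfrac{1}{\eta}\,u\bigl(q(x+\eta u,y+\mu v)-q(x,y)-\eta\langle\nabla_x q(x,y),u\rangle\bigr)\Bigr].
\end{equation*}
Plugging in the first-order Taylor expansion, the surviving linear term $\mu\langle\nabla_y q,v\rangle$ vanishes in expectation against $u$ by independence of $u$ and $v$, leaving only the quadratic remainder $R$ with $|R|\leq (L_{1,q}/2)(\eta^2\|u\|^2+\mu^2\|v\|^2)$. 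Bounding $\|\mathbb{E}[uR/\eta]\|\leq \mathbb{E}[\|u\|\,|R|]/\eta$ and invoking Lemma~\ref{lemmaExpectationNormPower} to estimate $\mathbb{E}\|u\|^3\leq(n+3)^{3/2}$ and $\mathbb{E}\|u\|\,\mathbb{E}\|v\|^2\leq n^{1/2}m$ produces exactly $U_x^{(1)}$; the $y$-side bound follows by the symmetric calculation.

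Part (c) uses the same template with a second-order Taylor expansion, so the leading $\mathcal{C}^2$ correction $(1/2)\langle\nabla^2 q(\eta u,\mu v),(\eta u,\mu v)\rangle$ also appears. For the gradient bound, this quadratic term splits into three blocks: the $xx$-block contributes an odd cubic in $u$ (mean zero), the mixed $xy$-block factors through $\mathbb{E}[v]=0$, and the $yy$-block carries an outer factor of $\mathbb{E}[u]=0$, so all three vanish against the outer $u$. Only the cubic Taylor remainder $R_3$ with $|R_3|\leq (L_{2,q}/6)\|(\eta u,\mu v)\|^3$ survives. The sub-additive inequality $(a+b)^{3/2}\leq\sqrt{2}(a^{3/2}+b^{3/2})$ decouples $\|u\|$ and $\|v\|$ in $\|(\eta u,\mu v)\|^3$, and then Lemma~\ref{lemmaExpectationNormPower} with $p=3,4$ delivers $U_x^{(2)}$. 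For the Hessian bounds, the same second-order Taylor expansion is substituted into the symmetric finite-difference formulas~\eqref{Hessian_x}, \eqref{Hessian_y}, and \eqref{Hessian_xy2}; the core identity to verify is that each quadratic block, when integrated against the outer factors $(uu^\top-I_n)$, $(vv^\top-I_m)$, or $uv^\top$, selects out the corresponding true Hessian block while the other two blocks vanish, which is an instance of the Stein identities recalled in Section~\ref{sec:stein}. The cubic remainder is then controlled using $\|uu^\top-I_n\|\leq\|u\|^2+1$, the analogous bound for $vv^\top-I_m$, and moment estimates from Lemma~\ref{lemmaExpectationNormPower} with $p=3,4,5$.

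The main obstacle is the bookkeeping in part (c): one must carefully identify which quadratic cross-terms vanish by parity or independence, and then track the exact polynomial degrees in $n,m$ together with the powers of $\eta,\mu$ produced by the Gaussian moment bounds in order to match the specific shapes of $U_x^{(2)}, U_{xx}^{(2)}, U_{xy}^{(2)}, U_{yy}^{(2)}$. No individual step is deep, but keeping all constants and exponents aligned across the three partial-derivative bounds requires care.
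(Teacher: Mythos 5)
Your plan is correct and follows essentially the same route as the paper: Taylor-expand $q$ to the appropriate order, eliminate the lower-order terms using zero mean, independence, and Gaussian second/fourth-moment identities (the paper does this for the $\mathcal{C}^2$ gradient case via an explicit $\pm(\eta u,\mu v)$ symmetrization and for the Hessian blocks via the one-sided formula plus the moment formulas of \cite{petersen2008matrix}, but your parity/Stein-identity cancellations are the same computation and your $(a+b)^{3/2}\le\sqrt{2}\,(a^{3/2}+b^{3/2})$ split even gives slightly sharper constants than the stated $U^{(2)}$ bounds), and control the remainders with Lemma~\ref{lemmaExpectationNormPower}. The only step you leave implicit is assembling the three block estimates into the full Hessian bound, which the paper handles by the one-line inequality $\|\nabla^2\delta\|\le\|\nabla^2_{xx}\delta\|+\|\nabla^2_{yy}\delta\|+\|\nabla^2_{xy}\delta\|$ obtained from zero-padding the blocks and the triangle inequality.
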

\begin{proof}
    See Section \ref{propNestApprox:proof} of the Supplement. 
\end{proof}
Two remarks about this result are in place:
\begin{remark}
    Proposition \ref{propNestApprox} is stated in a very general way, so that bounds on the partial derivatives can also be acquired, by simply picking the upper-bound component with the corresponding subscript. For example, $\|\nabla_x q_{\eta,\mu}(x,y) - \nabla_x q(x,y)\| \leq U^{(1)}_x$, when $q\in \mathcal{C}^1(S;L_{1,q})$. Or, if $q\in \mathcal{C}^2(S;L_{2,q})$, then $\|  \nabla^2_{xy} q_{\eta,\mu}(x,y) - \nabla^2_{xy} q(x,y)\| \leq  U^{(2)}_{xy}$.
\end{remark}

\begin{remark}
    Under stronger assumptions about $q$, the gradient and Hessian bounds in parts (b) and (c) of Proposition \ref{propNestApprox} can be significantly improved. These bounds are stated as supplemental remarks in the proof of Proposition \ref{propNestApprox}. 
\end{remark}

We can also derive similar results under stochastic setting. In particular, assume that
function $q(x,y)$ is defined as 
\[
    q_{\eta,\mu}(x,y) \triangleq \mathbb{E}_{u,v,\zeta} [Q(x+\eta u, y+\mu v,\zeta)],
\]
where $\zeta$ is a random vector independent of Gaussian random vectors $(u,v)$.

\begin{proposition}\label{propNestApprox_stch}
Consider $x\in\mathbb{R}^n$, $y\in \mathbb{R}^m$, and $z = (x,y)\in \mathbb{R}^{n+m}$. For a given function $q: \mathbb{R}^{n+m}\to \mathbb{R}$, and positive scalars $\eta$ and $\mu$:
\begin{itemize}
    \item[(a)]If $q\in \mathcal{C}^1(\mathbb{R}^{n+m};L_{1,q})$, we have
\begin{align*}
\EE \|\nabla_x Q_{\eta,\mu}(x,y,\zeta) - \nabla_x q_{\eta,\mu}(x,y)\|^2 &\le \frac{(L_{1,q}+L_{1,Q})^2}{2}\left(\eta(n+3)^{\frac{3}{2}} + \frac{\mu^2}{\eta} m n^{\frac{1}{2}} \right)^2+ 2\sigma_{1,Q}^2,\\
\EE \|\nabla_y Q_{\eta,\mu}(x,y,\zeta) - \nabla_y q_{\eta,\mu}(x,y)\|^2 &\le \frac{(L_{1,q}+L_{1,Q})^2}{2}\left(\frac{\eta^2}{\mu} n m^{\frac{1}{2}}+ \mu(m+3)^{\frac{3}{2}} \right)^2+ 2\sigma_{1,Q}^2,
\end{align*}
where $\nabla_x Q_{\eta,\mu}(x,y,\zeta)$ and $\nabla_y Q_{\eta,\mu}(x,y,\zeta)$ are defined similar to \eqref{gradx} and \eqref{grady}, respectively, and $q(x,y)$ is replaced by $Q(x, y,\zeta)$.

\item [(b)] If $q\in \mathcal{C}^2(\mathbb{R}^{n+m};L_{2,q})$, we have
\begin{align*}
    \EE \|\nabla_{xy} Q_{\eta,\mu}(x,y,\zeta) - \nabla_{xy} q_{\eta,\mu}(x,y)\|^2 \le& \frac{8 (L_{2,q}+L_{2,Q})^2}{9}\left(\frac{\eta^2}{\mu}(n+4)^2 m^{1/2} + \frac{\mu^2}{\eta}(m+4)^2 n^{1/2}\right)^2\\
    &+2\sigma_{2,Q}^2,
\end{align*}
where $\nabla_{xy} Q_{\eta,\mu}(x,y,\zeta)$ is defined similar to \eqref{Hessian_xy2}, in which $q(x,y)$ is replaced by $Q(x, y,\zeta)$.
\end{itemize}
\end{proposition}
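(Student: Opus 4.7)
The plan is to split the error of the stochastic estimator into a smoothing bias (which is controlled pointwise in $\zeta$ by Proposition~\ref{propNestApprox}) and a stochastic fluctuation (which is controlled by the variance hypothesis in Assumption~\ref{stochastic_assumption}). Concretely, for the $x$-partial in part~(a), I would write
\begin{equation*}
\nabla_x Q_{\eta,\mu}(x,y,\zeta) - \nabla_x q_{\eta,\mu}(x,y) \;=\; A(\zeta) + C + B(\zeta),
\end{equation*}
where $A(\zeta) := \nabla_x Q_{\eta,\mu}(x,y,\zeta) - \nabla_x Q(x,y,\zeta)$, $C := \nabla_x q(x,y) - \nabla_x q_{\eta,\mu}(x,y)$, and $B(\zeta) := \nabla_x Q(x,y,\zeta) - \nabla_x q(x,y)$. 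I then apply $\|A+B+C\|^2 \le 2\|A+C\|^2 + 2\|B\|^2$ and the triangle inequality on $\|A+C\|$, so that the two (deterministic-smoothness) biases are grouped together and the (mean-zero) stochastic piece is isolated.

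Because Assumption~\ref{stochastic_assumption} transfers Assumption~\ref{fg_assumption}(b) to every realization $Q(\cdot,\cdot,\zeta)$, Proposition~\ref{propNestApprox}(b) applies pointwise in $\zeta$ and yields $\|A(\zeta)\| \le U_x^{(1)}$ with Lipschitz constant $L_{1,Q}$, and $\|C\| \le U_x^{(1)}$ with Lipschitz constant $L_{1,q}$. Since $U_x^{(1)}$ is linear in its Lipschitz constant, these combine to $\|A(\zeta)+C\| \le \tfrac{L_{1,q}+L_{1,Q}}{2}\bigl(\eta(n+3)^{3/2} + \tfrac{\mu^2}{\eta}mn^{1/2}\bigr)$ almost surely. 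For the fluctuation, unbiasedness of $\nabla_x Q(x,y,\zeta)$ gives $\mathbb{E}_\zeta[B]=0$ and Assumption~\ref{stochastic_assumption} bounds its second moment by $\sigma_{1,Q}^2$. Multiplying out $2\bigl(\tfrac{L_{1,q}+L_{1,Q}}{2}\bigr)^2 = \tfrac{(L_{1,q}+L_{1,Q})^2}{2}$ and adding $2\sigma_{1,Q}^2$ produces the stated bound. The $y$-partial is symmetric with $U_y^{(1)}$ in place of $U_x^{(1)}$.

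Part~(b) uses the identical three-term decomposition applied to $\nabla^2_{xy}$: the two smoothing biases are bounded pointwise by $U^{(2)}_{xy}$ from Proposition~\ref{propNestApprox}(c), once with $L_{2,Q}$ and once with $L_{2,q}$, and the stochastic deviation $\nabla^2_{xy}Q(x,y,\zeta) - \nabla^2_{xy}q(x,y)$ contributes $\sigma_{2,Q}^2$ by Assumption~\ref{stochastic_assumption}. Since $U^{(2)}_{xy}$ carries the prefactor $\tfrac{2}{3}$ and remains linear in its Lipschitz constant, $2\bigl(\tfrac{2(L_{2,q}+L_{2,Q})}{3}\bigr)^2 = \tfrac{8(L_{2,q}+L_{2,Q})^2}{9}$ is exactly the prefactor appearing in the claim.

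The chief subtlety I anticipate is the choice of grouping. The naive bound $\|A+B+C\|^2 \le 3(\|A\|^2+\|B\|^2+\|C\|^2)$ would yield a prefactor $3(L_{1,q}^2+L_{1,Q}^2)$ instead of the tighter cross-term $\tfrac{1}{2}(L_{1,q}+L_{1,Q})^2$; the stated bound only comes out if the two deterministic smoothing biases are combined via triangle inequality \emph{before} squaring, so that their Lipschitz constants add. A secondary technical point is that the bound on $B(\zeta)$ must come from the assumed variance $\sigma_{1,Q}^2$ (resp.\ $\sigma_{2,Q}^2$) on the underlying unsmoothed stochastic derivative, not on the smoothed one—this is legitimate because the decomposition is constructed so that the smoothing operators act only on $A$ and $C$, leaving $B$ as an unsmoothed stochastic gradient (resp.\ Hessian) discrepancy directly covered by Assumption~\ref{stochastic_assumption}.
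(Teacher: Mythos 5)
Your proposal is correct and follows essentially the same route as the paper: the paper also splits the error via the triangle inequality into the two deterministic smoothing biases (bounded pointwise by $U^{(1)}_x$, resp.\ $U^{(2)}_{xy}$, once with $L_{1,Q}$/$L_{2,Q}$ and once with $L_{1,q}$/$L_{2,q}$, exploiting linearity in the Lipschitz constant) plus the unsmoothed stochastic deviation, and then squares and takes expectation, yielding the $\tfrac{(L_{1,q}+L_{1,Q})^2}{2}$ and $\tfrac{8(L_{2,q}+L_{2,Q})^2}{9}$ prefactors together with $2\sigma_{1,Q}^2$ (resp.\ $2\sigma_{2,Q}^2$). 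Your grouping of the two bias terms before squaring is exactly the step the paper performs implicitly when it writes the combined bound $U^{(1)}_x\bigl(1+\tfrac{L_{1,Q}}{L_{1,q}}\bigr)$, so there is no substantive difference.
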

\begin{proof}
    See Section \ref{propNestApprox_stch:proof} of the Supplement. 
\end{proof}
\subsection{Stochastic Gradient of Approximate Smooth Functions}
Equations \eqref{gradx}--\eqref{Hessian_xy2} provide exact expressions for the gradient and Hessian of the smooth function $q_{\eta,\mu}(x,y)$ by only referencing function values. To numerically compute the expectation, one requires infinitely many realizations of the input arguments, which is not practical. Instead, we may consider averaging a smaller number of realizations, as few as a single sample (e.g., when employing stochastic gradient descent). Mathematically speaking, instead of evaluating $\nabla_x q_{\eta,\mu}$ and $\nabla_y q_{\eta,\mu}$ through \eqref{gradx} and \eqref{grady}, the following stochastic versions may be considered: 
\begin{align}\label{stoch:gradx}
     \tilde \nabla_x q_{\eta,\mu}(x,y)  &=  \frac{q(x+\eta u, y+\mu v) - q(x, y)}{\eta} \ u, \\
     \tilde \nabla_y q_{\eta,\mu}(x,y) & =  \frac{q(x+\eta u, y+\mu v) - q(x, y)}{\mu} \ v.\label{stoch:grady}
\end{align}
This section focuses on bounding the error introduced by such approximations in representing the true gradient $\nabla q(x,y)$. More specifically, we are interested in upper-bounding the quantity $\EE \|\tilde \nabla q_{\eta,\mu}(x,y)\|^2$ in terms of $\| \nabla q(x,y)\|^2$ and the problem input parameters, such as the smoothness parameters and the dimensionality. 

Moreover, since calculating the gradient of the upper function in a bilevel problem, as will be specified in the next section, involves access to the Hessian of the lower level function, we will also bound the moments of the Hessian blocks $\EE \|\tilde \nabla_{xx}^2 q_{\eta,\mu}(x,y)\|^2$ where 
\begin{equation}
    \tilde \nabla_{xx}^2 q_{\eta,\mu}(x,y) =  \left( uu^\top -I_n\right) \frac{q(x+\eta u, y+\mu v) + q(x-\eta u,y-\mu v)-2q(x,y )}{2\eta^2}.\label{stochHessxx}
\end{equation}
and $\EE \|\tilde \nabla_{xy}^2 q_{\eta,\mu}(x,y)\|^2$ where 
\begin{equation}
    \tilde \nabla_{xy}^2 q_{\eta,\mu}(x,y) =  uv^\top  \frac{q(x+\eta u, y+\mu v) + q(x-\eta u,y-\mu v)-2q(x,y )}{\eta\mu}.\label{stochHessxy}
\end{equation}
Unlike Proposition \ref{propNestApprox} which is presented generally and for different orders of smoothness, the results in this section are only restricted to the smoothness order that is later considered in the algorithm analysis. Clearly, the techniques employed can be used to generalize the results for other cases.

\begin{proposition}\label{propMomentBounds}
Consider $x\in\mathbb{R}^n$, $y\in \mathbb{R}^m$. For $q: \mathbb{R}^{n+m}\to \mathbb{R}$, where $q\in \mathcal{C}^1(\mathbb{R}^{n+m};L_{1,q})$, consider the stochastic gradient $\tilde \nabla_x q_{\eta,\mu}(x,y)$ in \eqref{stoch:gradx}, where $u$ and $v$ are independent standard normal vectors. Then
\begin{equation*}
\EE  \|\tilde \nabla_x q_{\eta,\mu}(x,y) \|^2  \leq L_{1,q}^2\left(\eta^2(n+6)^3+\frac{\mu^4}{\eta^2}n(m+4)^2 \right) +  4(n+2) \|\nabla_x  q(x, y)\|^2 +  \frac{4\mu^2}{\eta^2}n\|\nabla_y  q(x, y)\|^2.
\end{equation*}
\end{proposition}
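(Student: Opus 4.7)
The plan is to rewrite the finite difference defining $\tilde\nabla_x q_{\eta,\mu}(x,y)$ via a first-order Taylor expansion, then split the resulting expression into three pieces using two applications of Young's inequality with carefully tuned constants, and finally take expectations using independence of $u,v$ together with Lemma~\ref{lemmaExpectationNormPower}.

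First, since $q\in \mathcal{C}^1(\mathbb{R}^{n+m};L_{1,q})$, I would write
\[
q(x+\eta u,y+\mu v)-q(x,y)=\eta\langle \nabla_x q(x,y),u\rangle+\mu\langle \nabla_y q(x,y),v\rangle+R(u,v),
\]
where the remainder satisfies $|R(u,v)|\le \tfrac{L_{1,q}}{2}\bigl(\eta^2\|u\|^2+\mu^2\|v\|^2\bigr)$. Substituting into \eqref{stoch:gradx} decomposes the stochastic gradient as
\[
\tilde\nabla_x q_{\eta,\mu}(x,y)=\langle \nabla_x q,u\rangle u+\tfrac{\mu}{\eta}\langle \nabla_y q,v\rangle u+\tfrac{R}{\eta}u.
\]

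Next I would apply Young's inequality twice. The outer split uses $\|a+b\|^2\le(1+\alpha)\|a\|^2+(1+1/\alpha)\|b\|^2$ with $\alpha=3$ to separate the $\nabla_x q$ term from the rest, giving a factor $4$ in front of $\langle\nabla_x q,u\rangle^2\|u\|^2$ and a factor $4/3$ in front of the squared norm of the combined $\nabla_y q$ and $R$ contributions. A second Young's step with $\beta=2$ splits that latter piece into a $3$-term for the $\langle\nabla_y q,v\rangle$ part and a $3/2$-term for $R$, producing the overall coefficients $4$, $4$, and $2$ respectively. These specific choices of $\alpha,\beta$ are what force the constants to line up with the target bound.

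It then remains to take expectations. The moment identity $\Exp[\langle g,u\rangle^2\|u\|^2]=(n+2)\|g\|^2$ for $u\sim\mathcal{N}(0,I_n)$ follows by rotational invariance (reducing to $\Exp[u_1^2\|u\|^2]=3+(n-1)$) and handles the first term. Independence of $u$ and $v$ lets me factor $\Exp[\langle \nabla_y q,v\rangle^2\|u\|^2]=\|\nabla_y q\|^2\cdot n$. Finally, using $(a+b)^2\le 2a^2+2b^2$ to bound $R^2\le\tfrac{L_{1,q}^2}{2}\bigl(\eta^4\|u\|^4+\mu^4\|v\|^4\bigr)$ and then Lemma~\ref{lemmaExpectationNormPower} gives $\Exp[R^2\|u\|^2]\le \tfrac{L_{1,q}^2}{2}\bigl(\eta^4(n+6)^3+\mu^4 n(m+4)^2\bigr)$. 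Multiplying by $2/\eta^2$ and collecting terms reproduces the claimed bound exactly.

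The routine part is the Taylor expansion and the expectation calculations; the only mildly delicate point is the Young's inequality parameter selection, since a naive $3$-term AM-GM would yield constants that are off by the factors needed to make the terms combine cleanly. Once $\alpha=3$ and $\beta=2$ are in place, the rest is bookkeeping.
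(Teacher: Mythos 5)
Your proposal is correct and follows essentially the same route as the paper's proof: expand the finite difference via the first-order Taylor remainder bound, split the square into the $\nabla_x q$, $\nabla_y q$, and remainder pieces with coefficients $4$, $4$, $2$, and then evaluate the Gaussian moments using $\EE[\langle g,u\rangle^2\|u\|^2]=(n+2)\|g\|^2$, independence of $u,v$, and Lemma~\ref{lemmaExpectationNormPower}. The only cosmetic difference is that the paper obtains the same $(4,4,2)$ coefficients by two symmetric splits $(a+b)^2\le 2a^2+2b^2$ (grouping the two linear terms together) rather than your weighted Young's inequalities with $\alpha=3$, $\beta=2$, so the parameter tuning you flag as delicate is not actually needed.
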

\begin{proof}
    See Section \ref{propMomentBounds:proof} of the Supplement. 
\end{proof}
\begin{corollary}\label{corMomentBounds}
    By deriving a similar expression for $\mathbb{E}\|\tilde \nabla_y q_{\eta,\mu}(x,y) \|^2$, as
    \begin{align*}
\mathbb{E}\|\tilde \nabla_y q_{\eta,\mu}(x,y) \|^2 \leq   L_{1,q}^2\left( \mu^2(m+6)^3+\frac{\eta^4}{\mu^2}m(n+4)^2 \right)\! +\!  \frac{4\eta^2}{\mu^2}m \|\nabla_x  q(x, y)\|^2\!+\!4(m+2) \|\nabla_y  q(x, y)\|^2,
\end{align*}
and using the fact that $\|\tilde\nabla q\|^2 = \|\tilde\nabla_x q\|^2+\|\tilde\nabla_y q\|^2$, the full stochastic gradient vector can be bounded as
\begin{align*}
\mathbb{E}\|\tilde \nabla q_{\eta,\mu}(x,y) \|^2 \leq ~ & L_{1,q}^2\left(\eta^2(n+6)^3 +\frac{\mu^4}{\eta^2}n(m+4)^2 + \mu^2(m+6)^3+\frac{\eta^4}{\mu^2}m(n+4)^2 \right)\\ & +  4\left(n+2+\frac{\eta^2}{\mu^2}m\right) \|\nabla_x  q(x, y)\|^2+4\left(m+2+\frac{\mu^2}{\eta^2}n\right) \|\nabla_y  q(x, y)\|^2.
\end{align*}
\end{corollary}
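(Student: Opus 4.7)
The plan is to exploit the symmetry in the roles of $x$ and $y$ in the definitions \eqref{stoch:gradx}--\eqref{stoch:grady}. Proposition \ref{propMomentBounds} was stated only for the partial stochastic gradient with respect to $x$, but the assumption $q\in \mathcal{C}^1(\mathbb{R}^{n+m};L_{1,q})$ treats the two blocks on equal footing, as do the smoothing formulas once one interchanges $(x,\eta,n,u)$ with $(y,\mu,m,v)$. So the first step is simply to rerun the proof of Proposition \ref{propMomentBounds} under this interchange. The outcome is the bound
\begin{equation*}
\mathbb{E}\|\tilde \nabla_y q_{\eta,\mu}(x,y)\|^2 \leq L_{1,q}^2\left(\mu^2(m+6)^3+\tfrac{\eta^4}{\mu^2}m(n+4)^2\right) + \tfrac{4\eta^2}{\mu^2}m\|\nabla_x q(x,y)\|^2 + 4(m+2)\|\nabla_y q(x,y)\|^2,
\end{equation*}
which is exactly the expression asserted in the statement of the corollary.

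Next, I would observe that by our notational convention $(x_1,x_2)$ denotes the vertical concatenation $[x_1^\top,x_2^\top]^\top$, and the definitions \eqref{stoch:gradx}, \eqref{stoch:grady} produce two vectors that are precisely the blocks of $\tilde\nabla q_{\eta,\mu}(x,y)$. Therefore the pointwise identity
\begin{equation*}
\|\tilde\nabla q_{\eta,\mu}(x,y)\|^2 = \|\tilde\nabla_x q_{\eta,\mu}(x,y)\|^2 + \|\tilde\nabla_y q_{\eta,\mu}(x,y)\|^2
\end{equation*}
holds deterministically, for every realization of the Gaussian pair $(u,v)$.

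Taking expectations on both sides and invoking linearity of expectation, I would then add the bound from Proposition \ref{propMomentBounds} to the symmetry-derived bound above. Grouping the terms by what they multiply ($L_{1,q}^2$, $\|\nabla_x q(x,y)\|^2$, and $\|\nabla_y q(x,y)\|^2$) yields the stated inequality after straightforward rearrangement.

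In short, there is no real obstacle here; the result is essentially a bookkeeping consequence of Proposition \ref{propMomentBounds}. The only point worth being careful about is confirming that the hypothesis $q\in \mathcal{C}^1(\mathbb{R}^{n+m};L_{1,q})$ is genuinely symmetric in the two blocks (it is, since the Lipschitz constant $L_{1,q}$ bounds the gradient of $q$ viewed as a function on the full space $\mathbb{R}^{n+m}$), so that the same constant $L_{1,q}$ appears in both halves of the sum and no extra symmetrization loss is incurred.
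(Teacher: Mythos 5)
Your proposal is correct and matches the paper's own reasoning: the corollary is obtained exactly by interchanging the roles of $(x,\eta,n,u)$ and $(y,\mu,m,v)$ in Proposition \ref{propMomentBounds} to get the $y$-block bound, and then summing the two bounds via the pointwise identity $\|\tilde\nabla q_{\eta,\mu}\|^2=\|\tilde\nabla_x q_{\eta,\mu}\|^2+\|\tilde\nabla_y q_{\eta,\mu}\|^2$. Your added check that the hypothesis $q\in\mathcal{C}^1(\mathbb{R}^{n+m};L_{1,q})$ is symmetric in the two blocks is a sensible precaution and consistent with the paper.
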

In our next result, we bound the second moments of $\|\tilde \nabla_{xx}^2 q_{\eta,\mu}(x,y) z\|$ for a fixed $z$, and relate it to the problem input parameters and the characteristics of the exact Hessian matrix. 
\begin{proposition}\label{hessMomentBounds}
Consider a given $\theta\in \R^n$, $x\in\mathbb{R}^n$, $y\in \mathbb{R}^m$, and $q: \mathbb{R}^{n+m}\to \mathbb{R}$, where $q\in \mathcal{C}^2(\mathbb{R}^{n+m};L_{2,q})$.
\begin{itemize}
    \item [a)] The stochastic Hessian block \eqref{stochHessxx} obeys
\begin{align*}\notag 
\EE_{u,v|\theta}\Big\|  \tilde \nabla_{xx}^2 q_{\eta,\mu}(x,y) \theta  \Big\|^2  \leq & ~2L_{2,q}^2\left( 2\eta^2(n+16)^4 + \frac{\mu^6}{\eta^4}(m+6)^3 (n+3)\right)\|\theta\|^2\\ \notag &+ \bigg( \frac{15}{2} (n+6)^2\left\|\nabla_{xx}^2 q(x,y)\right\|_F^2 + \frac{3\mu^2}{\eta^2}(3n+13)\|\nabla_{xy}^2q(x,y)\|_F^2 \\& \qquad\qquad\qquad\quad  \qquad\qquad\qquad+ \frac{3\mu^4}{2\eta^4}(m+2)(n+3)\|\nabla_{yy}^2q(x,y)\|_F^2\bigg)\|\theta\|^2.
\end{align*}

    \item [b)] The stochastic Hessian block \eqref{stochHessxy} obeys
\begin{align*}\notag 
\EE_{u,v|\theta}\Big\|  \tilde \nabla_{xy}^2 q_{\eta,\mu}(x,y) \theta  \Big\|^2  \leq & ~8L_{2,q}^2\left[ \frac{\eta^4}{\mu^2}(n+8)^4 + \frac{2\mu^4}{\eta^2} n(m+12)^3\right]\|\theta\|^2\\ \notag &+ \bigg( \frac{6\eta^2}{\mu^2}(n+4)(n+2)\|\nabla^2_{xx} q\|_F^2 + 36(n+2)\|\nabla^2_{xy}q(x,y)\|_F^2 \\& \qquad\qquad\qquad\quad  \qquad\qquad\qquad~~+ \frac{30\mu^2}{\eta^2}n(m+2)\|\nabla^2_{yy}q(x,y)\|_F^2\bigg)\|\theta\|^2.
\end{align*}
\end{itemize}

\end{proposition}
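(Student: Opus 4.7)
The plan is to bound the squared norms by first Taylor-expanding the finite-difference combination in the numerator around $(x,y)$, then treating the scalar multiplier in front of the matrix factors $(uu^\top - I_n)\theta$ or $uv^\top\theta$ via Young's inequality, and finally evaluating all the resulting Gaussian moments using Lemma~\ref{lemmaExpectationNormPower} together with independence of $u$ and $v$.

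The first step is to write, using $q\in\mathcal{C}^2(\mathbb{R}^{n+m};L_{2,q})$, the symmetric finite difference
\begin{equation*}
    \Delta_{\eta,\mu}(u,v) := q(x+\eta u, y+\mu v) + q(x-\eta u, y-\mu v) - 2 q(x,y).
\end{equation*}
Because of the $\pm$ symmetry, the first-order Taylor terms cancel, and the second-order Taylor expansion gives
\begin{equation*}
    \Delta_{\eta,\mu}(u,v) = \eta^2 u^\top \nabla_{xx}^2 q\, u + 2\eta\mu\, u^\top \nabla_{xy}^2 q\, v + \mu^2 v^\top \nabla_{yy}^2 q\, v + R(u,v),
\end{equation*}
where the Hessian-Lipschitz assumption yields $|R(u,v)| \le \tfrac{L_{2,q}}{3}\|(\eta u,\mu v)\|^3 \le \tfrac{L_{2,q}}{3}\bigl(\eta\|u\|+\mu\|v\|\bigr)^3$. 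For part (a), dividing by $2\eta^2$ produces a sum of four scalar terms; I expand $\|\tilde\nabla_{xx}^2 q_{\eta,\mu}(x,y)\theta\|^2 = \|(uu^\top - I_n)\theta\|^2\bigl(\Delta_{\eta,\mu}/2\eta^2\bigr)^2$ and apply $(a+b+c+d)^2 \le 4(a^2+b^2+c^2+d^2)$ to separate the four contributions. An analogous expansion with $uv^\top\theta$ in place of $(uu^\top-I_n)\theta$ and division by $\eta\mu$ handles part (b).

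Each of the four resulting expectations has the form $\EE\bigl[\|M(u)\theta\|^2 \cdot S(u,v)^2\bigr]$, where $M(u)\in\{uu^\top-I_n,\ uv^\top\}$ and $S(u,v)$ is either a Gaussian quadratic form $u^\top A u$, $u^\top B v$, $v^\top C v$ (with $A,B,C$ denoting the relevant Hessian blocks), or the remainder $R(u,v)/(\cdot)$. For the three Hessian-block contributions, I would exploit the independence of $u$ and $v$ to factor the expectations over $v$ out whenever $S$ depends only on $v$, and reduce the remaining $u$-integrals to standard Gaussian polynomial moments — for instance $\EE\|(uu^\top-I_n)\theta\|^2 = (n+1)\|\theta\|^2$, and higher moments such as $\EE[\|(uu^\top-I_n)\theta\|^2 (u^\top A u)^2]$ can be bounded by Cauchy--Schwarz together with $\EE\|(uu^\top-I_n)\theta\|^4 \lesssim (n+4)^2 \|\theta\|^2$ and $\EE(u^\top A u)^4 \lesssim \|A\|_F^4$. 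The residual contribution is handled by Cauchy--Schwarz and Lemma~\ref{lemmaExpectationNormPower} applied to $\|u\|^k$ and $\|v\|^k$ for $k\le 8$, using the pointwise bound on $R$. The factors of $\mu/\eta$ and $\eta/\mu$ appearing in the stated bound arise naturally from the division by $2\eta^2$ (respectively $\eta\mu$) together with the $\eta^2,\eta\mu,\mu^2$ weights in the quadratic form.

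The main obstacle is the sheer bookkeeping of mixed Gaussian moments: the expectations $\EE[\|(uu^\top-I_n)\theta\|^2(u^\top A u)^2]$ and $\EE[\|uv^\top\theta\|^2(u^\top A u)^2]$ require careful use of Isserlis'/Wick's formula (or its polynomial Cauchy--Schwarz surrogates) to extract the $\|A\|_F^2$ dependence while also pulling out the correct power of $n$. For the cross term involving $u^\top\nabla_{xy}^2 q\, v$, I will use independence to first average over $v$, producing a factor proportional to $\|\nabla_{xy}^2 q\|_F^2$, and then bound the resulting $u$-expectation in closed form. Collecting all four contributions, aggregating the Gaussian-polynomial moment bounds into the explicit $(n+k)^p$ factors, and absorbing numerical constants gives the stated inequalities in parts (a) and (b).
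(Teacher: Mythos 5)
Your high-level plan coincides with the paper's: symmetrize so the first-order Taylor terms cancel, bound the cubic remainder through $L_{2,q}$, split the squared finite difference into the remainder piece plus the three quadratic-form pieces, and then evaluate Gaussian moments of products such as $\|(uu^\top-I_n)\theta\|^2\,(u^\top A u)^2$ using independence of $u,v$ and Lemma~\ref{lemmaExpectationNormPower}. The gap is in how you control those moments. The auxiliary bound you invoke, $\EE\,(u^\top A u)^4 \lesssim \|A\|_F^4$, is false: since $\EE[u^\top A u]=\tr(A)$, Jensen gives $\EE\,(u^\top A u)^4 \ge (\tr A)^4$, which for $A=I_n$ equals $n^2\|A\|_F^4$, so the correct bound carries an extra factor of order $n^2$. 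That missing dimension factor is exactly where the $(n+6)^2$ in part (a) comes from; with the corrected fourth moment your Cauchy--Schwarz route does recover the right order $(n+\mathrm{const})^2\|\nabla^2_{xx}q\|_F^2\|\theta\|^2$, but not with controlled constants. The paper instead keeps the pointwise bound $\|(uu^\top-I_n)\theta\|^2\le \|u\|^2(\theta^\top u)^2+\|\theta\|^2$ and evaluates the mixed moments essentially exactly, via $\EE[u^\top A u\, u^\top B u]=2\tr(AB)+\tr(A)\tr(B)$, Magnus's sixth-moment formula, and the reduction $\EE[\langle u,A u\rangle^2(\theta^\top u)^2\|u\|^2]\le (n+8)\,\EE[\langle u,A u\rangle^2(\theta^\top u)^2]$ obtained from an exponential-splitting argument; these exact evaluations are what make the specific coefficients attainable.

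Relatedly, your splitting $(a+b+c+d)^2\le 4(a^2+b^2+c^2+d^2)$ cannot reach the stated constants. In part (a) it places a coefficient $16\eta^2\mu^2/(4\eta^4)=4\mu^2/\eta^2$ in front of $\EE\big[\|(uu^\top-I_n)\theta\|^2\,\langle u,\nabla^2_{xy}q\,v\rangle^2\big]$, and this expectation equals $(n+4)\|\nabla^2_{xy}q\|_F^2\|\theta\|^2+(2n+8)\,\theta^\top\nabla^2_{xy}q\,\nabla^2_{xy}q^\top\theta+\|\nabla^2_{xy}q\|_F^2\|\theta\|^2$ (after adding the $\|\theta\|^2$ part), which attains $(3n+13)\|\nabla^2_{xy}q\|_F^2\|\theta\|^2$ when $\nabla^2_{xy}q$ is rank one with left singular vector $\theta$; hence your chain yields at best $\frac{4\mu^2}{\eta^2}(3n+13)\|\nabla^2_{xy}q\|_F^2\|\theta\|^2$ where the statement has $\frac{3\mu^2}{\eta^2}(3n+13)$, and a similar loss appears on the remainder term, where $4R^2\le \frac{128}{9}L_{2,q}^2(\eta^6\|u\|^6+\mu^6\|v\|^6)$ exceeds the paper's $8L_{2,q}^2(\eta^6\|u\|^6+\mu^6\|v\|^6)$. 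So ``absorbing numerical constants'' does not close the argument for the inequalities as stated (the analogous issue recurs in part (b), e.g.\ $48(n+2)$ versus the stated $36(n+2)$). To land on the claimed coefficients you need the paper's asymmetric split $\Delta^2\le 2\big(e_{x,y}(\eta u,\mu v)+e_{x,y}(-\eta u,-\mu v)\big)^2+2\langle(\eta u,\mu v),\nabla^2 q\,(\eta u,\mu v)\rangle^2$ followed by $\langle\cdot\rangle^2\le 3\eta^4\langle u,\nabla^2_{xx}q\,u\rangle^2+6\eta^2\mu^2\langle u,\nabla^2_{xy}q\,v\rangle^2+3\mu^4\langle v,\nabla^2_{yy}q\,v\rangle^2$, together with the exact moment identities above; your sketch as written proves only a weaker bound with larger constants.
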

\begin{proof}
    See Section \ref{hessMomentBounds:proof} of the Supplement. 
\end{proof}

\begin{remark}
    Using variable symmetry, the result of Proposition~\ref{hessMomentBounds}.a) can be modified to bound $\EE_{u,v|\theta}\|  \tilde \nabla_{yy}^2 q_{\eta,\mu}(x,y) \theta \|^2$. Such bound, together with part b) of the proposition, would offer a bound for $\EE_{u,v|\theta}\|  \tilde \nabla^2 q_{\eta,\mu}(x,y) \theta \|^2$ (the main Hessian) which is skipped here as it is not used in the remainder of the paper. 
\end{remark}

\section{Zeroth-Order Bilevel Formulation}\label{sec:third}
In this section, we focus on zeroth-order approximation of the bilevel optimization in \eqnok{main_prob_st}. As it is clear from this formulation, the computation of $\psi(x)$ and $\nabla \psi(x)$ both explicitly require access to $y^*(x)$. Specifically, the following statements hold (which follow from the chain rule application to the optimality condition $\nabla_y g(\bar x, \nabla y^*(\bar x)) = 0$; detailed proof is available in \cite{GhadWang18}).     
\begin{lemma}
\label{def_Mxy}Suppose that Assumption~\ref{fg_assumption} holds.
\begin{itemize}
\item [a)] For any $\bar x \in X$, $y^*(\bar x)$ is unique and differentiable and 
\beq\label{grad_ystar}
\nabla y^*(\bar x) = -  \left[\nabla_{yy}^2 g(x,y^*(\bar x))\right]^{-1}\nabla_{xy}^2 g(x,y^*(\bar x))^\top.
\eeq

\item [b)] For any $\bar x \in X$:
\beq\label{grad_f2}
\nabla \psi(\bar x) = \nabla_x f(\bar x, y^*(\bar x))- \nabla_{xy}^2 g(x,y^*(\bar x)) \left[\nabla_{yy}^2 g(x,y^*(\bar x))\right]^{-1} \nabla_y f(\bar x, y^*(\bar x)).
\eeq
\end{itemize}
\end{lemma}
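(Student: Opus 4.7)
The plan is to derive both parts from standard tools: existence and uniqueness of $y^*(\bar x)$ from strong convexity, differentiability together with formula~\eqnok{grad_ystar} via the implicit function theorem applied to the lower-level optimality condition, and formula~\eqnok{grad_f2} by a direct chain-rule expansion. I expect no genuine obstacle; the only points requiring care will be verifying the hypotheses of the implicit function theorem and keeping track of Jacobian-versus-gradient transpose conventions.

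For part (a), I would fix $\bar x \in X$ and note that by Assumption~\ref{fg_assumption}(d) the map $y \mapsto g(\bar x, y)$ is strongly convex on $\mathbb{R}^m$, so its minimizer $y^*(\bar x)$ exists and is unique. To obtain differentiability of $y^*(\cdot)$, I would apply the implicit function theorem to $H(x,y) := \nabla_y g(x,y) = 0$. By Assumption~\ref{fg_assumption}(c), $H$ is continuously differentiable, and its partial Jacobian in $y$ is $\partial_y H(x,y) = \nabla_{yy}^2 g(x,y)$, which is uniformly positive definite (hence invertible) by strong convexity. The IFT then produces a $C^1$ local solution to $H=0$, and this local solution must agree with $y^*(\cdot)$ by the uniqueness argument above, giving differentiability on all of $X$. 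Differentiating the identity $\nabla_y g(x, y^*(x)) \equiv 0$ in $x$ with the chain rule yields
\[
\nabla_{xy}^2 g(x, y^*(x))^\top + \nabla_{yy}^2 g(x, y^*(x))\, \nabla y^*(x) = 0,
\]
where $\nabla y^*(x)$ is the $m \times n$ Jacobian of $y^*$ and $\nabla_{xy}^2 g$ is the $n \times m$ matrix of mixed partials. Pre-multiplying by $[\nabla_{yy}^2 g(x, y^*(x))]^{-1}$ produces exactly~\eqnok{grad_ystar}.

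For part (b), I would apply the chain rule to $\psi(x) = f(x, y^*(x))$ to get
\[
\nabla \psi(\bar x) = \nabla_x f(\bar x, y^*(\bar x)) + \nabla y^*(\bar x)^\top\, \nabla_y f(\bar x, y^*(\bar x)),
\]
and then substitute~\eqnok{grad_ystar}. Because $g$ is twice continuously differentiable, $\nabla_{yy}^2 g$ is symmetric and so is its inverse; consequently $\nabla y^*(\bar x)^\top = -\nabla_{xy}^2 g(\bar x, y^*(\bar x))\, [\nabla_{yy}^2 g(\bar x, y^*(\bar x))]^{-1}$, and plugging this into the chain-rule identity yields~\eqnok{grad_f2}. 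This completes the proposal; the whole argument is routine once the IFT hypotheses are checked, with strong convexity supplying invertibility of the inner Hessian at every point and Assumption~\ref{fg_assumption}(c) supplying the required smoothness.
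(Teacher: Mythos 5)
Your proposal is correct and follows essentially the same route the paper indicates: the paper states the lemma follows from applying the chain rule to the optimality condition $\nabla_y g(x,y^*(x))=0$ and defers the detailed proof to \cite{GhadWang18}, which is exactly the implicit-differentiation argument you carry out (with uniqueness from strong convexity and invertibility of $\nabla_{yy}^2 g$ from the modulus $\lambda_g$). Your handling of the Jacobian/transpose conventions and of the IFT hypotheses via Assumption~\ref{fg_assumption}(c)--(d) is consistent with the stated formulas, so no changes are needed.
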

Clearly, $y^*(x)$ is unavailable unless the inner problem has a closed-form solution, which poses computational challenges in applying gradient-type algorithms to the BLP problem. Alternatively, we define an estimator to approximate $\nabla \psi(x)$, which is inspired by the formulation in \eqref{grad_f2}. In particular, for any $x \in X$ and  $y \in \bbr^m$, we define 
\beq
\bar \nabla (f,g)(x,y) := \nabla_x f(x,y)- \nabla_{xy}^2 g(x,y)\left[\nabla_{yy}^2 g(x,y)\right]^{-1}\nabla_y f(x,y).\label{grad_f}
\eeq
Accordingly, the following statements hold (c.f. \cite{GhadWang18}).
\begin{lemma}\label{grad_f_error}
Under Assumption~\ref{fg_assumption}, for any $x \in X$ and $y \in \bbr^m$:
\begin{itemize}
\item [a)] The error of the gradient estimator $\bar \nabla f(x, y)$ depends on the quality of the approximate solution to the inner problem, i.e.,
\beq \label{def_grad_error}
\|\bar \nabla (f,g)(x, y) - \nabla \psi(x)\| \le C_1 \|y^*(x)-y\|,
\eeq
where $C_1>0$ is a constant depending on the problem parameters such as Lipschitz constants of functions $f$, $g$, and their gradients.

\item [b)] The optimal solution of the inner problem, $y^*(x)$, is Lipschitz continuous in $x$.

\item [c)] The gradient of $ \psi(x)$ is Lipschitz continuous.
\end{itemize}

\end{lemma}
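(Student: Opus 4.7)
My plan is to prove parts (a), (b), (c) sequentially, each by exploiting a particular combination of Assumption~\ref{fg_assumption} together with the closed-form representation of $\nabla\psi(x)$ from Lemma~\ref{def_Mxy}. For part (a), I would add and subtract intermediate terms to split $\bar\nabla(f,g)(x,y) - \nabla\psi(x)$ into four pieces by swapping the three occurrences of $y$ with $y^*(x)$ one at a time:
\begin{align*}
&\big[\nabla_x f(x,y) - \nabla_x f(x,y^*(x))\big]\\
&\quad - \big[\nabla_{xy}^2 g(x,y) - \nabla_{xy}^2 g(x,y^*(x))\big] \big[\nabla_{yy}^2 g(x,y)\big]^{-1} \nabla_y f(x,y)\\
&\quad - \nabla_{xy}^2 g(x,y^*(x)) \bigl\{[\nabla_{yy}^2 g(x,y)]^{-1} - [\nabla_{yy}^2 g(x,y^*(x))]^{-1}\bigr\} \nabla_y f(x,y)\\
&\quad - \nabla_{xy}^2 g(x,y^*(x)) [\nabla_{yy}^2 g(x,y^*(x))]^{-1}\big[\nabla_y f(x,y) - \nabla_y f(x,y^*(x))\big].
\end{align*}
The first and fourth pieces are controlled by $L_{1,f}\|y - y^*(x)\|$ via the Lipschitz gradient of $f$; the second uses the Lipschitz Hessian of $g$ (giving $L_{2,g}\|y - y^*(x)\|$) together with $\|[\nabla_{yy}^2 g]^{-1}\|\le 1/\lambda_g$ (from strong convexity) and $\|\nabla_y f\|\le L_{0,f}$ (from Lipschitz continuity of $f$); and the third is handled by the resolvent identity $A^{-1} - B^{-1} = A^{-1}(B - A)B^{-1}$, which reduces it to the previous bounded factors times $L_{2,g}\|y - y^*(x)\|$. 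Summing yields the constant $C_1$ as a polynomial in $L_{0,f}, L_{1,f}, L_{1,g}, L_{2,g}$ and $1/\lambda_g$.

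For part (b), the key fact is that $\nabla_y g(x, y^*(x)) = 0$ for every $x \in X$. For any $x_1, x_2 \in X$, strong convexity of $g(x_1,\cdot)$ yields
\begin{equation*}
\lambda_g \|y^*(x_1) - y^*(x_2)\|^2 \le \bigl\langle \nabla_y g(x_1, y^*(x_1)) - \nabla_y g(x_1, y^*(x_2)),\; y^*(x_1) - y^*(x_2)\bigr\rangle.
\end{equation*}
Substituting $\nabla_y g(x_1, y^*(x_1)) = 0 = \nabla_y g(x_2, y^*(x_2))$ rewrites the inner product as $\langle \nabla_y g(x_2, y^*(x_2)) - \nabla_y g(x_1, y^*(x_2)), y^*(x_1) - y^*(x_2)\rangle$, which Cauchy--Schwarz and the Lipschitz gradient of $g$ bound by $L_{1,g}\|x_1 - x_2\|\|y^*(x_1) - y^*(x_2)\|$. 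Cancelling one norm delivers the Lipschitz constant $L_{1,g}/\lambda_g$.

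For part (c), I would decompose $\nabla\psi(x_1) - \nabla\psi(x_2)$ analogously to part (a), this time swapping both arguments one at a time along the curve $x \mapsto (x, y^*(x))$. Each block difference is controlled by the Lipschitz properties of $\nabla f$, $\nabla_{xy}^2 g$, and $\nabla_{yy}^2 g$ in both variables, combined with part (b) to convert each $\|y^*(x_1) - y^*(x_2)\|$ factor into an $O(\|x_1 - x_2\|)$ bound, while the uniform bounds $\|\nabla_y f\|\le L_{0,f}$, $\|\nabla_{xy}^2 g\|\le L_{1,g}$, $\|[\nabla_{yy}^2 g]^{-1}\|\le 1/\lambda_g$ absorb the remaining factors. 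The main obstacle common to parts (a) and (c) is the inverse-Hessian difference: the resolvent identity is useful only because strong convexity provides the uniform spectral bound $[\nabla_{yy}^2 g]^{-1} \preceq \lambda_g^{-1} I$, after which the rest is careful bookkeeping to collect all constants into a single Lipschitz coefficient.
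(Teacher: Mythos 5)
Your proposal is correct: the paper itself does not prove this lemma but simply cites \cite{GhadWang18}, and your argument reproduces the standard proof given there — the four-term decomposition with the resolvent identity and the bounds $\|[\nabla_{yy}^2 g]^{-1}\|\le 1/\lambda_g$, $\|\nabla_y f\|\le L_{0,f}$ for part (a), the optimality-condition/strong-convexity argument giving the constant $L_{1,g}/\lambda_g$ for part (b), and the analogous block decomposition along $x\mapsto (x,y^*(x))$ combined with part (b) for part (c). No gaps; the constants you collect match the standard ones.
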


Using the notations in the previous section, we are now ready to present a variant of the stochastic bilevel problem in \eqnok{main_prob_st} which can be solved using zeroth-order information of the problem. Letting $\mathbf{\eta}=(\eta_1,\eta_2)$ and $\mathbf{\mu}=(\nu_1,\nu_2)$ be two vectors in $\bbr_+^2$,  and introducing the standard normal vectors $u \sim\mathcal{N}(0,I_n)$, and $v\sim\mathcal{N}(0,I_m)$, a Gaussian smooth approximation to \eqnok{main_prob_st} can be formulated as
\begin{align} \label{main_prob_zst}
&\min_{x \in X} \Big\{\psi_{\eta,\mu}(x):= f_{\eta_1,\mu_1}(x,y_{\eta_2,\mu_2}^*(x))=\bbe[F(x+\eta_1 u,y_{\eta_2,\mu_2}^*(x)+\mu_1 v,\xi)]\Big\} \nn \\
& \text{s.t.} \ \  y_{\eta_2,\mu_2}^*(x) = \argmin_{y \in \bbr^m} \Big\{g_{\eta_2,\mu_2}(x,y)= \bbe[G(x+\eta_2 u,y+\mu_2 v,\zeta)]\Big\},
\end{align}
where the expectations are taken w.r.t $(u,v)$ in addition to $(\xi,\zeta)$.

Having defined the bilevel smooth approximation problem in \eqref{main_prob_zst}, similar to Lemma~\ref{def_Mxy}.b), the gradient of the main objective can be formulated as
\begin{align}\label{def_smooth_hypergrad}
\nabla \psi_{\eta,\mu}(\bar x) =&~ \nabla_x f_{\eta_1,\mu_1}(\bar x, y^*_{\eta_2,\mu_2}(\bar x))\nonumber \\
& - \nabla_{xy}^2 g_{\eta_2,\mu_2}(\bar x, y^*_{\eta_2,\mu_2}(\bar x))\left[\nabla_{yy}^2 g_{\eta_2,\mu_2}(\bar x, y^*_{\eta_2,\mu_2}(\bar x))\right]^{-1}  \nabla_y f_{\eta_1,\mu_1}(\bar x, y^*_{\eta_2,\mu_2}(\bar x)).
\end{align}
In the following result, we establish the relationship between the above problem and the BLP problem in \eqnok{main_prob_st}.

\begin{proposition}\label{zeroth-order bilevel}
Consider the bilevel optimization problem \eqnok{main_prob_st} and its Gaussian smooth approximation \eqref{main_prob_zst}. If $f\in\mathcal{C}^1(X\times \R^m,L_{1,f})$ and $g\in\mathcal{C}^1(X\times \R^m,L_{1,g})$, then
\begin{align}
   & \|y_{\eta_2,\mu_2}^*(x)-y^*(x)\|^2 \leq \frac{2 L_{1,g}}{\lambda_g }\left(\eta_2^2n + \mu_2^2 m\right), \label{distysL1}\\ 
    &\|\nabla \psi_{\eta,\mu}(x) - \nabla \psi(x)\| \le L_{1, f} \sqrt{\frac{2 L_{1,g}}{\lambda_g }\left(\eta_2^2n + \mu_2^2 m\right)}\nonumber \\
    & \qquad \qquad \qquad \qquad \qquad + \frac{L_{1,f}}{2}\left(\eta_1(n+3)^{\frac{3}{2}} + \frac{\mu_1^2}{\eta_1} m n^{\frac{1}{2}} + \frac{\eta_1^2}{\mu_1} n m^{\frac{1}{2}}+ \mu_1(m+3)^{\frac{3}{2}} \right):= \sqrt{A}.\label{grad_diff}
\end{align}
\end{proposition}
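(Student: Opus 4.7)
My plan is to prove the two parts in order, using Propositions~\ref{general_prop} and~\ref{propNestApprox} as the key analytic tools.

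For the inner-solution bound \eqref{distysL1}, I would combine the strong convexity of the smoothed lower function with the zeroth-order function-value error. By Proposition~\ref{general_prop}(a), $g_{\eta_2,\mu_2}(x,\cdot)$ inherits the strong convexity modulus $\lambda_g$ of $g(x,\cdot)$, so evaluating strong convexity at its unique minimizer $y_{\eta_2,\mu_2}^*(x)$ gives
\[
\frac{\lambda_g}{2}\|y^*(x) - y_{\eta_2,\mu_2}^*(x)\|^2 \;\le\; g_{\eta_2,\mu_2}(x, y^*(x)) - g_{\eta_2,\mu_2}(x, y_{\eta_2,\mu_2}^*(x)).
\]
I would then add and subtract $g$ at both points on the right-hand side. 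Proposition~\ref{propNestApprox}(b) applied to $g$ bounds each $|g_{\eta_2,\mu_2}-g|$ contribution by $\frac{L_{1,g}}{2}(\eta_2^2 n + \mu_2^2 m)$, while the residual $g(x, y^*(x)) - g(x, y_{\eta_2,\mu_2}^*(x))$ is nonpositive since $y^*(x)$ minimizes $g(x,\cdot)$. Rearranging yields \eqref{distysL1}.

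For the hypergradient bound \eqref{grad_diff}, the plan is a triangle inequality that cleanly separates the two error sources: the approximation of $y^*(x)$ by $y_{\eta_2,\mu_2}^*(x)$, and the Gaussian smoothing of the upper function. Using $\nabla f(x, y_{\eta_2,\mu_2}^*(x))$ as an auxiliary intermediate point, I would split
\[
\|\nabla \psi_{\eta,\mu}(x) - \nabla \psi(x)\| \;\le\; \|\nabla f_{\eta_1,\mu_1}(x, y_{\eta_2,\mu_2}^*(x)) - \nabla f(x, y_{\eta_2,\mu_2}^*(x))\| + \|\nabla f(x, y_{\eta_2,\mu_2}^*(x)) - \nabla f(x, y^*(x))\|.
\]
Proposition~\ref{propNestApprox}(b) applied to $f$ bounds the first term by $U^{(1)}_x + U^{(1)}_y$ with $L_{1,q}$ replaced by $L_{1,f}$ -- precisely the second summand in the claim. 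The second term is controlled by the $L_{1,f}$-Lipschitz continuity of $\nabla f$ in $y$ as $L_{1,f}\|y_{\eta_2,\mu_2}^*(x) - y^*(x)\|$, and substituting the bound from part~(1) produces the first summand.

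The main obstacle I anticipate is reconciling the clean triangle-inequality decomposition above with the implicit-function representations of $\nabla \psi$ in \eqref{grad_f2} and of $\nabla \psi_{\eta,\mu}$ in \eqref{def_smooth_hypergrad}, both of which involve Hessians of $g$ and $g_{\eta_2,\mu_2}$ and their inverses. Because the proposition is stated under only $\mathcal{C}^1$ regularity of $g$, the Hessian-containing contributions must be handled with care -- either absorbed by reducing the hypergradient computation to a gradient-of-composition argument controlled purely by $L_{1,f}$, or shown to collapse into terms that are already dominated by $L_{1,f}\|y^*(x) - y_{\eta_2,\mu_2}^*(x)\|$ via a tighter manipulation. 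Confirming that this absorption holds, without leaking an additional $L_{2,g}$-type term into the bound, is where the bulk of the technical care lies.
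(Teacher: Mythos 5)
Your proposal follows essentially the same route as the paper: for \eqref{distysL1} you use strong convexity of the smoothed lower objective together with the function-value error of Proposition~\ref{propNestApprox}(b) (the paper adds the two strong-convexity inequalities for $g$ and $g_{\eta_2,\mu_2}$, you drop a nonpositive term instead, yielding the identical constant), and for \eqref{grad_diff} you use the same triangle-inequality split into a gradient-smoothing error (Proposition~\ref{propNestApprox}(b) applied to $f$) plus an $L_{1,f}$-Lipschitz term times $\|y^*_{\eta_2,\mu_2}(x)-y^*(x)\|$, your intermediate point $\nabla f(x,y^*_{\eta_2,\mu_2}(x))$ being an immaterial variant of the paper's $\nabla f_{\eta,\mu}(x,y^*(x))$. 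The obstacle you flag -- reconciling this decomposition with the Hessian-correction terms in \eqref{grad_f2} and \eqref{def_smooth_hypergrad} -- is not resolved in the paper's own proof either: it simply bounds $\|\nabla f_{\eta,\mu}(x,y^*_{\eta_2,\mu_2}(x))-\nabla f(x,y^*(x))\|$ and identifies that quantity with the left-hand side of \eqref{grad_diff}, so your plan is on par with, and no weaker than, the published argument.
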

\begin{proof}
    See Section \ref{zeroth-order bilevel:proof} of the Supplement. 
\end{proof}
\begin{remark}
    Proposition \ref{zeroth-order bilevel} is part of a more comprehensive result that also covers the case
    $f,g\in \mathcal{C}^0$. The full result is presented and proved in the corresponding proof section. 
\end{remark}

\subsection{Zeroth-Order Hessian Inverse Operation}
An important step in designing most of algorithms for solving BLP problems is to estimate the gradient of the upper level objective function which involves a Hessian matrix inverse calculation in \eqnok{grad_f2} which is computationally expensive. Using Neumann series (e.g., \cite{GhadWang18, Ji2020BilevelOC}) and stochastic gradient descent (SGD) method (e.g., \cite{dagrou2022a, chen2023optimal}) are the two main approaches that have been proposed to overcome this computational challenge. In this subsection, we provide a zeroth-order variant of the SGD algorithm to approximate the Hessian inverse product in \eqref{def_smooth_hypergrad} which is central to the calculation of $\nabla \psi_{\eta,\mu}(\bar x)$. The subroutine to be designed is a part of the overall bilevel algorithm and the desired task to be performed by the subroutine ultimately reduces to calculating 
\begin{equation}\label{zbar:def}
\bar z := \left[\nabla_{yy}^2 g_{\eta_2,\mu_2}(\bar x, \bar y)\right]^{-1}  \nabla_y f_{\eta_1,\mu_1}(\bar x, \bar y),
\end{equation}
for given $\bar x$ and $\bar y$. We henceforth focus on the calculation of \eqref{zbar:def}, while avoiding an actual inversion of the Hessian matrix $\nabla_{yy}^2 g_{\eta_2,\mu_2}(\bar x, \bar y)$. To this end, it is straightforward to see that $\bar z$ is the solution to the following strongly convex program:
\begin{equation} \label{zbar:program}
\bar z = \argmin_z ~J(z) := \frac{1}{2} z^\top \nabla_{yy}^2 g_{\eta_2,\mu_2}(\bar x, \bar y) z -  \nabla_y f_{\eta_1,\mu_1}(\bar x, \bar y)^\top z. 
\end{equation}
To solve \eqref{zbar:program}, we consider a stochastic gradient descent (SGD) approach with the gradient approximation
\begin{align}\notag 
\tilde \nabla J(z) =  \Big( v v^\top - &I_m\Big) \left[\frac{G(\bar x+\eta_2 u, \bar y+\mu_2 v, \zeta) + G(\bar x-\eta_2 u,\bar y-\mu_2 v, \zeta)-2G(\bar x,\bar y,\zeta)}{2\mu_2^2} \right]z\\&   -v'\!\left[  \frac{F(\bar x+\eta_1 u', \bar y+\mu_1 v', \xi) - F(\bar x, \bar y,\xi)}{\mu_1} \ \right],\label{gradJ:def}
\end{align}
where $v,v',u$ and $u'$ are standard Gaussian vectors, and $\zeta$ and $\xi$ are random scalars. The zeroth-order SGD algorithm for solving this specific problem is given below.
\begin{algorithm} [H]
	\caption{The Stochastic Zeroth-order Hessian Inverse Approximation (SZHIA) }
	\label{Hinv:alg1}
	\begin{algorithmic}

\STATE \textbf{Input}:
$\bar x \in X$, $\bar y \in \bbr^m$, smoothing parameters $\eta_1,\eta_2,\mu_1,\mu_2$, and maximum iterations $T$.
\STATE Initialize $z_0$
\STATE {\bf For $\tau=0,\ldots$, $T-1$:}

{\addtolength{\leftskip}{0.2in}

\STATE {Draw i.i.d random vectors $\zeta_{\tau}, \xi_\tau$ and $u_{\tau},v_\tau,u'_{\tau},v'_\tau$ independently from a standard Gaussian distribution, and calculate $\tilde \nabla J(z_\tau)$ using \eqref{gradJ:def}.}\\
 Set $$z_{\tau+1} = z_\tau - \gamma \tilde \nabla J(z_\tau).$$
\\
}

{\bf End}
\STATE Output: $H_{\eta_1,\eta_2,\mu_1,\mu_2} = z_T$.
	\end{algorithmic}
\end{algorithm}

In the next result, we provide convergence analysis of the above algorithm. To avoid unnecessary smoothing and derive tighter and simpler bounds, we specifically consider a smoothing setting where $\eta_1=0$.  
\begin{theorem}\label{sgd:hessInv}
Let $z_T$ be the output of Algorithm~\ref{Hinv:alg1} and $\bar z$ as defined in \eqref{zbar:program}. When the smoothing parameters obey $\eta_1=0$ and  $\eta_2/\mu_2=\mathcal{O}(1)$, and  $0<\gamma < \min\left\{ \frac{2}{\lambda_g+L_{1,g}}, \frac{\lambda_g}{2V_H}\right\}$, we have 
\begin{align}
    \EE \left[\| z_T - \bar z\|^2 \right] &\leq (1-\gamma \lambda_g)^T\|z_0 - \bar z\|^2 + \frac{2\gamma \bar V}{\lambda_g},\label{eqSGD:Var}\\ 
    \|\EE [z_T] - \bar z\|^2 &\leq (1-\gamma \lambda_g)^T\|z_0 - \bar z\|^2,\label{eqSGD:bias} 
\end{align}
where for some numerical constants $c$ and $c'$ 
\begin{align}
V_H =&~ c\left[\mu_2^2 L_{2,G}^2m\left( m+n\right)^3 +  m\left(m+n\right)^2\left(\frac{\sigma_{2,G}^2}{m+n} + L_{2,G}^2\right)\right], \label{def_VH}\\ \notag 
\bar V=&~  c \left[\frac{\mu_2^2 L_{0,f}^2L_{2,G}^2}{\lambda_g^2}m\left( m+n\right)^3 + \frac{L_{0,f}^2}{\lambda_g^2} m\left(m+n\right)^2\left(\frac{\sigma_{2,G}^2}{m+n} + L_{2,G}^2\right)\right]\\ &+  c'\left[\mu_1^2 L_{1,F}^2 m^3 + m \left(\sigma_{1,F}^2 + L_{0,f}^2\right)\right].\label{def_Vbar}
\end{align}
\end{theorem}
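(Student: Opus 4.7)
The plan is to follow the standard SGD analysis for a strongly convex quadratic, where the main tasks are (i) verifying that the stochastic gradient in \eqref{gradJ:def} is an unbiased conditional estimator of $\nabla J$, (ii) bounding its second moment in a $(\|z_\tau-\bar z\|^2,\text{noise-at-optimum})$ decomposition, and (iii) feeding these into the usual one-step contraction and iterating. To set up, observe that $J(z)=\tfrac12 z^\top H z - b^\top z$ with $H=\nabla^2_{yy} g_{\eta_2,\mu_2}(\bar x,\bar y)$ and $b=\nabla_y f_{\eta_1,\mu_1}(\bar x,\bar y)$. By Proposition~\ref{general_prop}, smoothing preserves strong convexity and gradient Lipschitzness, so $\lambda_g I\preceq H\preceq L_{1,g} I$ and $J$ is $\lambda_g$-strongly convex. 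Writing $\tilde H$ for the bracketed matrix in \eqref{gradJ:def} and $\tilde b$ for the second term (so that $\tilde\nabla J(z)=\tilde H z-\tilde b$), the Stein identities in Theorem~\ref{thStein} together with the independence of the Gaussian and data samples used in the two estimators give $\EE[\tilde H]=H$ and $\EE[\tilde b]=b$, hence $\EE[\tilde\nabla J(z)\mid z]=\nabla J(z)$.

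For the second moment I use the decomposition
\[
\tilde\nabla J(z_\tau)=\tilde H(z_\tau-\bar z)+(\tilde H\bar z-\tilde b),
\]
and apply $\|a+b\|^2\le 2\|a\|^2+2\|b\|^2$. The first piece is bounded by the $\tilde\nabla^2_{yy}$ variant of Proposition~\ref{hessMomentBounds}.a (obtained by the $x\!\leftrightarrow\! y$, $\eta\!\leftrightarrow\!\mu$, $n\!\leftrightarrow\! m$ symmetry), promoted from deterministic $q$ to stochastic $G$ by adding the variance contribution $\sigma_{2,G}^2$; under $\eta_2/\mu_2=\mathcal{O}(1)$ the ratios $\eta_2^6/\mu_2^4$ and $\eta_2^4/\mu_2^2$ collapse to $\mathcal{O}(\mu_2^2)$, and grouping polynomial-in-$(n,m)$ factors yields $\EE\|\tilde H(z_\tau-\bar z)\|^2\le V_H\|z_\tau-\bar z\|^2$ with $V_H$ as in \eqref{def_VH}. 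For the second piece I split $\|\tilde H\bar z-\tilde b\|^2\le 2\|\tilde H\bar z\|^2+2\|\tilde b\|^2$. The first summand gives $V_H\|\bar z\|^2$, and since $\bar z=H^{-1}b$ with $\|H^{-1}\|\le 1/\lambda_g$ and $\|b\|\le L_{0,f}$ (the Lipschitz constant of $f$, preserved under smoothing by Proposition~\ref{general_prop}), this produces the first line of \eqref{def_Vbar}. The second summand is bounded via Corollary~\ref{corMomentBounds} specialized to $\eta_1=0$, applied to $F$ and combined with the gradient variance $\sigma_{1,F}^2$, giving the second line of \eqref{def_Vbar}. Altogether, $\EE\|\tilde\nabla J(z_\tau)\|^2\le 2V_H\|z_\tau-\bar z\|^2+2\bar V$.

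Inserting these pieces into the SGD identity
\[
\|z_{\tau+1}-\bar z\|^2=\|z_\tau-\bar z\|^2-2\gamma\langle\tilde\nabla J(z_\tau),z_\tau-\bar z\rangle+\gamma^2\|\tilde\nabla J(z_\tau)\|^2,
\]
taking conditional expectation, and using the quadratic form of strong convexity $\langle\nabla J(z_\tau),z_\tau-\bar z\rangle=\langle H(z_\tau-\bar z),z_\tau-\bar z\rangle\ge\lambda_g\|z_\tau-\bar z\|^2$ (the full, not halved, constant because $J$ is quadratic), I obtain
\[
\EE\!\left[\|z_{\tau+1}-\bar z\|^2\,\middle|\, z_\tau\right]\le(1-2\gamma\lambda_g+2\gamma^2 V_H)\|z_\tau-\bar z\|^2+2\gamma^2\bar V.
\]
The step-size restriction $\gamma\le\lambda_g/(2V_H)$ forces $2\gamma^2 V_H\le\gamma\lambda_g$, reducing the contraction factor to $1-\gamma\lambda_g$. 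Iterating, taking total expectation, and summing the geometric series $\sum_{\tau=0}^{T-1}(1-\gamma\lambda_g)^\tau\le 1/(\gamma\lambda_g)$ yields \eqref{eqSGD:Var}. For the bias bound \eqref{eqSGD:bias}, taking unconditional expectation of the iterate recursion and using unbiasedness gives $\EE[z_{\tau+1}]-\bar z=(I-\gamma H)(\EE[z_\tau]-\bar z)$; the spectrum of $H$ lies in $[\lambda_g,L_{1,g}]$, and the condition $\gamma<2/(\lambda_g+L_{1,g})$ forces $\|I-\gamma H\|\le 1-\gamma\lambda_g$, so iterating delivers $\|\EE[z_T]-\bar z\|\le(1-\gamma\lambda_g)^T\|z_0-\bar z\|$; squaring and using $1-\gamma\lambda_g\in(0,1)$ finishes \eqref{eqSGD:bias}.

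The main obstacle is the second-moment bookkeeping: lifting Proposition~\ref{hessMomentBounds} from deterministic $q$ to stochastic $G$ (to absorb the $\sigma_{2,G}^2$ variance), simplifying the dense polynomial-in-$(n,m)$ dependence under $\eta_2/\mu_2=\mathcal{O}(1)$ into the compact form of \eqref{def_VH}, and correctly allocating the two contributions to $\bar V$ with the appropriate smoothing parameters ($\mu_2$ on the Hessian side and $\mu_1$ on the gradient side). Once $V_H$ and $\bar V$ are in the stated forms, the per-step contraction and the geometric-series accounting are routine.
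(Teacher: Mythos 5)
Your proposal is correct and follows essentially the same route as the paper: reduce \eqref{zbar:program} to SGD on a strongly convex quadratic with $\lambda_g I\preceq \nabla^2_{yy}g_{\eta_2,\mu_2}\preceq L_{1,g}I$, verify unbiasedness via Stein's identity, bound the stochastic gradient's second moment through the split $\tilde H(z-\bar z)+(\tilde H\bar z-\tilde b)$ using the $yy$-variant of Proposition~\ref{hessMomentBounds} (with the $\sigma_{2,G}^2$ variance absorbed) and Corollary~\ref{corMomentBounds} with $\eta_1=0$ together with $\|\bar z\|\le L_{0,f}/\lambda_g$, and then run the standard contraction and spectral-radius arguments. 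The only cosmetic deviation is that you bound the noise-at-optimum term by Young's inequality ($2\|\tilde H\bar z\|^2+2\|\tilde b\|^2$) whereas the paper uses the exact variance decomposition from independence and zero mean; the factor of two is absorbed by the unspecified constants $c,c'$ in \eqref{def_Vbar}.
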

\begin{proof}
    See Section \ref{sgd:hessInv:proof} of the Supplement. 
\end{proof}

\section{Solution Method}\label{sec:four}
In this section, we present our proposed algorithm to solve the BLP problem in \eqref{main_prob_zst} that only uses zeroth-order information of both upper and lower objective functions. Our method is formally presented as Algorithm~\ref{alg_ZBSA} below. This algorithm consists of two loops. The inner loop is a zeroth-order SGD algorithm that solves the lower problem in BLP up to some accuracy and outputs $\bar y_k$ as an estimate to the optimal solution of the lower problem which will be used in the outer loop to update the decision variable of the upper level problem $x_k$ by a zeroth-order projected SGD step.

\begin{algorithm}
	\caption{The Zeroth-order Double-loop Stochastic Bilevel Approximation (ZDSBA) Method}
	\label{alg_ZBSA}
	\begin{algorithmic}

\STATE Input:
Smoothing parameters $\eta_1,\eta_2,\mu_1,\mu_2$, a maximum number of iteration $N \ge 1$, an initial solution $x_0 \in X$, $y_0 \in \bbr^m$ nonnegative sequences $\{\alpha_k\}_{k \ge 0}$, $\{\beta_t\}_{t \ge 0}$, and integer sequences $\{t_k\}_{k \ge 0}$ and $\{b_k\}_{k \ge 0}$.


{\bf For $k=0,1,\ldots$, $N$:}

\vgap

{\addtolength{\leftskip}{0.2in}

{\bf For $t=0,1,\ldots, t_k-1$:}

}
\vgap
{\color{black}
{\addtolength{\leftskip}{0.4in}

\STATE Compute partial gradient approximations of $G_{0,\mu}$ w.r.t $y$ according to \eqnok{grady} such that
\beq
\tilde \nabla_y G_{0,\mu_2}^t  = \left[ \frac{G(x_k, y_t+\mu_2 v^{(1)}_t, \zeta^{(1)}_t) - G(x_k, y_t,\zeta^{(1)}_t)}{\mu_2}\right] v^{(1)}_t\label{sgrady}
\eeq
where $\zeta^{(1)}_t$ and $v^{(1)}_t$ are i.i.d samples from $\zeta$ and Gaussian distribution, respectively. Set
\beq \label{def_yt}
y_{t+1} = y_t- \beta_t \tilde \nabla_y G_{0,\mu_2}^t.
\eeq

}

{\addtolength{\leftskip}{0.2in}
{\bf End}

\STATE  Set $\bar y_k=y_{t_k}$ and compute the partial gradient approximations according to \eqnok{gradx}, and \eqnok{Hessian_xy2}, such that
\begin{align}
\tilde \nabla_x F_{\eta_1,\mu_1}^k  &= \left[ \frac{F(x_k+\eta_1 u^{(2)}_k, \bar y_k, \xi_k) - F(x_k, \bar y_k,\xi_k)}{\eta_1}\right] u^{(2)}_k,\label{sgradfx}\\
\tilde \nabla^2_{xy} G_{\eta_2,\mu_2}^k &= G_{\eta_2,\mu_2}^k u^{(3)}_k {v^{(3)}_k}^\top  \label{Hessian_xy1},
\end{align}
where
\[
G_{\eta_2,\mu_2}^k =  \frac{G(x_k+\eta_2 u^{(3)}_k, \bar y_k+\mu_2 v^{(3)}_k, \zeta^{(3)}_k) + G(x_k-\eta_2 u^{(3)}_k, y_k-\mu_2 v^{(3)}_k, \zeta^{(3)}_k)- 2 G(x_k,\bar y_k, \zeta^{(3)}_k)}{\eta_2 \mu_2},
\]
and $\zeta^{(i)}_k$, $\xi_k$, and $(u^{(i)}_k,v^{(i)}_k)$ are i.i.d samples from $\zeta$, $\xi$, and Gaussian distribution, respectively.
Call the SZHIA method to compute $\tilde H^k_{\bar \eta,\bar \mu}$ with the inputs $(x_k,\bar y_k, \eta_1, \eta_2, \mu_1, \mu_2, T=b_k)$ and set
\beqa
\tilde \nabla \psi_{\bar \eta,\bar \mu}^k \equiv \tilde \nabla_x f_{\bar \eta,\bar \mu}(x_k,\bar y_k,\w_k) &=& \tilde \nabla_x F_{\eta_1,\mu_1}^k  - \tilde \nabla^2_{xy} G_{\eta_2,\mu_2}^k 
\tilde H^k_{\bar \eta,\bar \mu},\label{grad_f_st}
\eeqa
where $(\bar \eta,\bar \mu) = (\eta_1,\eta_2,\mu_1,\mu_2)$ and $\w_k =(\xi_k,\zeta^{(1)}_{[t_k]}, \zeta^{(2)}_k, \zeta^{(3)}_k, u^{(1)}_k, v^{(1)}_k, u^{(2)}_k, v^{(2)}_k, u^{(3)}_k, v^{(3)}_k)$.
Set
\beq \label{def_xk_st}
x_{k+1} = \arg\min_{x \in X} \left\{\langle \tilde \nabla \psi_{\bar \eta,\bar \mu}^k,x -x_k \rangle + \frac{1}{2 \alpha_k}\|x-x_k\|^2 \right\},
\eeq

}
}

{\bf End}
	\end{algorithmic}
\end{algorithm}

To provide convergence analysis of Algorithm~\ref{alg_ZBSA}, we first need to analyze the inner loop on the lower level problem. More specifically, we first focus on the analysis of the lower optimization problem
\begin{align} \label{inner:optim}
  \minimize_{y \in \bbr^m} g_{\eta_2,\mu_2}(x,y)= \bbe[G(x+\eta_2 u,y+\mu_2 v,\zeta)],
\end{align}
under the assumption that for each $x$, $g(x,y)$ is a strongly convex function of $y$. Intuitively, as detailed in the proof of Theorem \eqref{sgd:general},  developing SGD-type convergence results for \eqref{inner:optim} requires bounding the second moment $\EE  \|\tilde \nabla_y g_{\eta_2,\mu_2}(\tilde x,y_{\eta_2,\mu_2}^*) \|^2$ where $x=\tilde x$ is fixed, and $y_{\eta_2,\mu_2}^*$ is the minimizer of $g_{\eta_2,\mu_2}(\tilde x,y)$. From a technical standpoint, based on Proposition \ref{propMomentBounds} this requires globally bounding $\|\nabla_y g_{\eta_2,\mu_2}(\tilde x,y_{\eta_2,\mu_2}^*) \|^2$ (which is already zero since $y_{\eta_2,\mu_2}^*$ is the minimizer) and ${\eta_2^2}\mu_2^{-2}\|\nabla_x g_{\eta_2,\mu_2}(\tilde x,y_{\eta_2,\mu_2}^*) \|^2$ which is the contribution of smoothing along the $x$ coordinate. If the optimization was performed over a bounded domain, a Lipschitz continuity assumption for $g$ (which also transfers to $g_{\eta_2,\mu_2}$) would have been a straightforward way to bound $\|\nabla_x g_{\eta_2,\mu_2}(\tilde x,y_{\eta_2,\mu_2}^*) \|^2$. However, since the domain of the optimization is unbounded, and Lipschitz continuity and strong convexity are conflicting conditions for a function defined over the entire $\R^m$ (they cannot hold concurrently), we would need to perform the smoothing only along the $y$ direction (i.e., setting $\eta_2=0$), so that the term $\eta_2^2\mu_2^{-2}\|\nabla_x g_{\eta_2,\mu_2}(\tilde x,y_{\eta_2,\mu_2}^*) \|^2$ automatically vanishes to zero. 

For this reason we henceforth focus on the optimization
\begin{equation}\label{eq:SGDMinGenShort}
    y_{0,\mu_2}^* = \argmin_{y\in \R^m}~g_{0,\mu_2}(\tilde x,y) := \EE[G(\tilde x,y+\mu_2 v,\zeta)],
\end{equation}
and the SGD updates
\begin{equation}\label{eq:SGDGenShort}
    y_{t+1} = y_t -\beta_t \tilde\nabla_y G_{0,\mu_2}(\tilde x,y_t,\zeta_t ),
\end{equation}
where
\begin{equation}\label{gradG:stochasticShort}
    \tilde\nabla_y G_{0,\mu_2}(\tilde x,y,\zeta) = v\left[ \frac{G(\tilde x, y+\mu_2 v,\zeta) - G(\tilde x, y,\zeta)}{\mu_2}\right].
\end{equation}
Notice that
\begin{align*}
\EE \left[\tilde\nabla_y G_{0,\mu_2}(\tilde x,y,\zeta)\right] &= \mathbb{E}_{v} \mathbb{E}_\zeta ~\left[v \frac{G(\tilde x, y+\mu_2 v,\zeta) - G(\tilde x, y,\zeta)}{\mu_2} \right] \\& = \mathbb{E}_{v} ~\left[ v\frac{g(\tilde x, y+\mu_2 v) - g(\tilde x, y)}{\mu_2} \right]\\& = \nabla_y g_{0,\mu_2}(\tilde x,y).
\end{align*}
The following result characterizes the convergence behavior of the proposed SGD updates. 
\begin{theorem}\label{sgd:general}
		Consider the minimization \eqref{eq:SGDMinGenShort}, and the updates \eqref{eq:SGDGenShort}. Assume that $g$ and $G$ have Lipschitz continuous gradients, and $g$ is strongly convex. 
If for some $\epsilon >0$, the step-size and the number of iterations is set to 
\begin{equation}\label{eq:SDGStepSize}
   \beta_t = \beta(\epsilon) =  \min\left\{\frac{1}{8(m+4) L_{1,G}}, \epsilon\lambda_g, \frac{1}{\lambda_g}\right\}, \quad 
T = \left\lceil \frac{1}{\beta(\epsilon)\lambda_g}\log\frac{1}{\epsilon}\right\rceil,
\end{equation}
we have
\begin{align}\label{inner_loop1}
\EE \| y_T - y_{0,\mu_2}^*\|^2\leq \epsilon \left[\|y_0-y_{0,\mu_2}^*\|^2 + 8\mu_2^2(L_{1,G}^2+L_{1,g}^2)(m+4)^4 + 16(m+4)\sigma_{1,G}^2 \right]. 
\end{align}
\end{theorem}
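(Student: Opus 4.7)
The plan is to carry out a standard SGD analysis for a smooth strongly convex objective, adapted to the Gaussian-smoothed lower problem. Fix $\tilde x$ and abbreviate $y^* := y_{0,\mu_2}^*$. By Proposition \ref{general_prop}, the smoothed objective $g_{0,\mu_2}(\tilde x,\cdot)$ inherits strong convexity with modulus $\lambda_g$ and retains a Lipschitz continuous gradient with constant $L_{1,g}$. The calculation displayed just before the theorem also confirms that $\tilde\nabla_y G_{0,\mu_2}(\tilde x,y,\zeta)$ is an unbiased estimator of $\nabla_y g_{0,\mu_2}(\tilde x,y)$. I would then start from the one-step identity
\[
\|y_{t+1}-y^*\|^2 = \|y_t-y^*\|^2 - 2\beta_t\langle \tilde\nabla_y G_{0,\mu_2}(\tilde x,y_t,\zeta_t), y_t-y^*\rangle + \beta_t^2\|\tilde\nabla_y G_{0,\mu_2}(\tilde x,y_t,\zeta_t)\|^2,
\]
take the conditional expectation given $y_t$, and bound the cross term below using strong convexity (or its sharper co-coercivity counterpart obtained by combining strong convexity and smoothness of $g_{0,\mu_2}$).

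For the second-moment term, I would invoke Proposition \ref{propMomentBounds} specialized to $\eta=0$ and lift it to the stochastic setting by absorbing an order-$\sigma_{1,G}^2$ variance contribution in the spirit of Proposition \ref{propNestApprox_stch}. The crucial structural point is that the $(\eta_2^2/\mu_2^2)\,m\,\|\nabla_x g\|^2$ summand that would otherwise appear vanishes exactly because $\eta_2=0$. This is precisely why the theorem restricts to $\eta_2=0$: as emphasized in the discussion preceding the statement, that summand cannot be globally controlled on the unbounded domain $\bbr^m$, where Lipschitz continuity of the joint gradient and strong convexity in $y$ cannot coexist. The outcome is a bound of the form
\[
\EE\|\tilde\nabla_y G_{0,\mu_2}(\tilde x,y_t,\zeta_t)\|^2 \le L_{1,G}^2\mu_2^2(m+6)^3 + 4(m+2)\|\nabla_y g(\tilde x,y_t)\|^2 + C_1(m)\sigma_{1,G}^2,
\]
with $C_1(m)$ polynomial in $m$. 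I would next use Lipschitzness of $\nabla_y g$ together with the bias estimate $\|y^*-y^*(\tilde x)\|^2\le 2L_{1,g}\mu_2^2 m/\lambda_g$ drawn from Proposition \ref{zeroth-order bilevel} (at $\eta_2=0$) to replace $\|\nabla_y g(\tilde x,y_t)\|^2$ by a multiple of $L_{1,g}^2\|y_t-y^*\|^2$ plus a residual proportional to $L_{1,g}^3\mu_2^2 m/\lambda_g$ that can be collected into the bias term.

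Assembling these pieces yields a one-step recursion of the shape
\[
\EE\|y_{t+1}-y^*\|^2 \le \bigl(1-2\beta_t\lambda_g + c\,\beta_t^2(m+4)L_{1,G}L_{1,g}\bigr)\,\EE\|y_t-y^*\|^2 + \beta_t^2\,\mathcal{B},
\]
where $\mathcal{B}$ aggregates the smoothing-bias and noise contributions of orders $\mu_2^2(L_{1,G}^2+L_{1,g}^2)(m+4)^3$ and $(m+4)\sigma_{1,G}^2$. The first entry of the step-size cap, $\beta(\epsilon)\le 1/(8(m+4)L_{1,G})$, is calibrated precisely so that the $O(\beta_t^2)$ perturbation of the contraction factor is absorbed into $\beta_t\lambda_g$, leaving the clean geometric decay $(1-\beta\lambda_g)$.

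The last step is the customary tuning. Unrolling the recursion with the constant step-size $\beta(\epsilon)$ gives $\EE\|y_T-y^*\|^2 \le (1-\beta\lambda_g)^T\|y_0-y^*\|^2 + \beta\mathcal{B}/\lambda_g$. The cap $\beta(\epsilon)\le \epsilon\lambda_g$ then pushes the noise floor below $\epsilon\mathcal{B}/\lambda_g^2$, while $T\ge\log(1/\epsilon)/(\beta\lambda_g)$ drives the geometric term below $\epsilon\|y_0-y^*\|^2$, together reproducing \eqref{inner_loop1} after consolidating constants. The main obstacle I anticipate is the careful polynomial-in-$m$ bookkeeping required to fold the $(m+6)^3$ factor from the moment bound, the $(m+4)$ factor from the step-size cap, and the $(m+2)$ factor from the variance term into the tight $(m+4)^4$ and $(m+4)$ that appear in the claim; tracking these factors (rather than relaxing them) is what makes the clean prefactors $8$ and $16$ of the final expression emerge instead of an unspecified constant.
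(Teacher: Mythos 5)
Your high-level plan (one-step recursion, second-moment bound for the smoothed stochastic gradient, then the usual step-size tuning) matches the paper's, but the middle step you actually assemble has a genuine gap. You bound $\EE\|\tilde\nabla_y G_{0,\mu_2}(\tilde x,y_t,\zeta_t)\|^2$ by a term proportional to $(m+2)\|\nabla_y g(\tilde x,y_t)\|^2$ and then convert this to $L_{1,g}^2\|y_t-y_{0,\mu_2}^*\|^2$ (plus a bias residual) via Lipschitzness, arriving at a contraction factor of the form $1-2\beta\lambda_g+c\,\beta^2(m+4)L_{1,G}L_{1,g}$ (in fact the conversion gives a $\beta^2(m+2)L_{1,g}^2$ term). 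Absorbing that perturbation into $\beta\lambda_g$ requires $\beta\lesssim \lambda_g/\bigl((m+4)L_{1,g}^2\bigr)$, i.e.\ a condition-number-dependent step size; the cap $\beta\le 1/(8(m+4)L_{1,G})$ in \eqref{eq:SDGStepSize} only reduces the perturbation to order $\beta L_{1,g}^2/L_{1,G}$, which exceeds $\beta\lambda_g$ whenever $L_{1,g}^2\gg \lambda_g L_{1,G}$ (e.g.\ $L_{1,g}=L_{1,G}$ with small $\lambda_g$), and the other two caps $\epsilon\lambda_g$, $1/\lambda_g$ do not help. So with the theorem's prescribed $\beta(\epsilon)$ your recursion need not contract at rate $(1-\beta\lambda_g)$, and the argument as written does not prove \eqref{inner_loop1}; moreover your detour through $y^*(\tilde x)$ injects residuals of order $\mu_2^2 m^2 L_{1,g}^3/\lambda_g$ into $\mathcal{B}$, which do not reduce to the $\lambda_g$-free bracket in the claim.

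The paper avoids this precisely by never trading gradients for distances. In Lemma \ref{sgd:generalLem} the cross term is lower-bounded by strong convexity so that the recursion retains the \emph{function-value gap} $-2\beta\bigl(g_{0,\mu_2}(\tilde x,y_t)-g_{0,\mu_2}(\tilde x,y_{0,\mu_2}^*)\bigr)$, and the conditional second moment of the stochastic gradient is bounded through co-coercivity of the smoothed stochastic function $G_{0,\mu_2}$ around $y_{0,\mu_2}^*$, giving $2\alpha_3+4L_{1,G}\bigl(g_{0,\mu_2}(\tilde x,y_t)-g_{0,\mu_2}(\tilde x,y_{0,\mu_2}^*)\bigr)$ after the linear term vanishes in expectation (since $\nabla_y g_{0,\mu_2}(\tilde x,y_{0,\mu_2}^*)=0$). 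With $\alpha_2=4(m+4)$ from Nesterov--Spokoiny and $\beta\le 1/(2\alpha_2 L_{1,G})=1/(8(m+4)L_{1,G})$, the two function-gap terms cancel, leaving exactly the $(1-\beta\lambda_g)$ contraction plus a constant noise term $\beta^2(\alpha_1+2\alpha_2\alpha_3)$ with no condition-number dependence; $\alpha_3$ comes from Proposition \ref{propNestApprox_stch}(a) evaluated at the smoothed minimizer. If you want to salvage your distance-based route you must either keep the negative $\|\nabla g_{0,\mu_2}(y_t)\|^2$ term from co-coercivity of $g_{0,\mu_2}$ itself (the alternative you mention only in passing, which then needs an extra step relating $\nabla_y g$ to $\nabla_y g_{0,\mu_2}$) or accept a smaller, condition-number-dependent step size and a different theorem statement.
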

\begin{proof}
    To avoid disrupting the flow, the proof is provided at the end of this section (see \S \ref{sgd:general:proof}).
\end{proof}

We can now specialize the above results for the output of the inner loop in Algorithm~\ref{alg_ZBSA}.
\begin{lemma}\label{lemma_sgd}
Let $\{y_t\}_{t=0}^{t_k}$ be the sequence generated at the $k$-th iteration of Algorithm~\ref{alg_ZBSA}. If, for some $\epsilon_k >0$,  
\beq\label{alpha_beta_st}
t_k \ge  \left\lceil \max\left\{\frac{8(m+4) L_{1,G}}{\lambda_g}, \frac{1}{\epsilon_k \lambda_g^2}, 1\right\}\log\frac{1}{\epsilon_k}\right\rceil, \quad \beta_t = \beta(\epsilon_k) \quad \forall t,
\eeq
where $\beta(\cdot)$ is defined in \eqref{eq:SDGStepSize}, we have
\begin{align}\label{y_cnvrg_st}
\EE \left[\|y_{t_k} - y_{\eta_2,\mu_2}^*(x_k) \|^2 \right] &\le 8 \epsilon_k \left[\|y_0-y^*(x_k)\|^2 + 8(m+4)\sigma_{1,G}^2 \right]\nn \\
& \quad + 4\mu_2^2 \left[8\epsilon_k (L_{1,G}^2+L_{1,g}^2)(m+4)^4 + \frac{ L_{1,g}}{\lambda_g } \left((3+4\epsilon_k)m+ \frac{\eta_2^2n}{\mu_2^2}\right)\right]:=\bar A_k. 
\end{align}
\end{lemma}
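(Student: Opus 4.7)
The plan is to view the inner loop at iteration $k$ of Algorithm~\ref{alg_ZBSA} as an instance of the SGD scheme analyzed in Theorem~\ref{sgd:general}, since the stochastic oracle in \eqref{sgrady} coincides with \eqref{gradG:stochasticShort} when the $x$-smoothing parameter is set to zero. That theorem gives convergence of $y_{t_k}$ toward $y^*_{0,\mu_2}(x_k)$, whereas the lemma targets the distance to $y^*_{\eta_2,\mu_2}(x_k)$; the bridge between these two smoothed minimizers will be provided by the discrepancy estimate \eqref{distysL1} of Proposition~\ref{zeroth-order bilevel}, comparing both to the true $y^*(x_k)$.

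First, I would verify that the parameter prescription in \eqref{alpha_beta_st} matches the hypothesis of Theorem~\ref{sgd:general}. Since
\[
\frac{1}{\beta(\epsilon_k)\lambda_g} = \max\Big\{\tfrac{8(m+4)L_{1,G}}{\lambda_g},\; \tfrac{1}{\epsilon_k\lambda_g^2},\; 1\Big\},
\]
the bound \eqref{alpha_beta_st} on $t_k$ is exactly $t_k\ge \lceil \log(1/\epsilon_k)/(\beta(\epsilon_k)\lambda_g)\rceil$. Applying Theorem~\ref{sgd:general} conditionally on $x_k$, with $\tilde x = x_k$, $T=t_k$, and $\epsilon=\epsilon_k$, then yields
\[
\EE\|y_{t_k}-y^*_{0,\mu_2}(x_k)\|^2 \;\le\; \epsilon_k\!\left[\|y_0-y^*_{0,\mu_2}(x_k)\|^2 + 8\mu_2^2(L_{1,G}^2+L_{1,g}^2)(m+4)^4 + 16(m+4)\sigma_{1,G}^2\right].
\]

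Next, I would invoke \eqref{distysL1} twice, once with smoothing parameters $(0,\mu_2)$ and once with $(\eta_2,\mu_2)$, to obtain
\[
\|y^*_{0,\mu_2}(x_k)-y^*(x_k)\|^2 \le \tfrac{2L_{1,g}}{\lambda_g}\mu_2^2 m, \qquad \|y^*_{\eta_2,\mu_2}(x_k)-y^*(x_k)\|^2 \le \tfrac{2L_{1,g}}{\lambda_g}(\eta_2^2 n+\mu_2^2 m).
\]
Two applications of the elementary inequality $\|a+b\|^2\le 2\|a\|^2+2\|b\|^2$ then conclude the argument: one to replace $\|y_0-y^*_{0,\mu_2}(x_k)\|^2$ on the right-hand side of the SGD bound by $\|y_0-y^*(x_k)\|^2$ plus a $\mu_2^2 m$ correction, and a second to decompose
\[
\|y_{t_k}-y^*_{\eta_2,\mu_2}(x_k)\|^2 \le 2\|y_{t_k}-y^*_{0,\mu_2}(x_k)\|^2 + 2\|y^*_{0,\mu_2}(x_k)-y^*_{\eta_2,\mu_2}(x_k)\|^2,
\]
where the last term is controlled via one further triangle step through $y^*(x_k)$ together with the two discrepancy bounds above.

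The hard part is not conceptual but bookkeeping: the final form $\bar A_k$ in \eqref{y_cnvrg_st} carefully groups $\epsilon_k$-dependent and $\epsilon_k$-independent contributions of the $\mu_2^2 m$ terms, so one has to track how the factors of $2$ from the successive triangle-inequality applications compound with the leading $\epsilon_k$ coming from Theorem~\ref{sgd:general}. The $\eta_2^2 n$ contribution will appear only outside the $\epsilon_k$ factor (since the SGD step itself is insensitive to $\eta_2$), while each $\mu_2^2 m$ contribution from \eqref{distysL1} must be distributed consistently to assemble the $(3+4\epsilon_k)m$ coefficient appearing in the statement.
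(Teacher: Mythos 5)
Your strategy is the same as the paper's: condition on $x_k$, apply Theorem~\ref{sgd:general} with $\tilde x=x_k$, $T=t_k$, $\epsilon=\epsilon_k$ (your identification of \eqref{alpha_beta_st} with $t_k\ge \lceil \log(1/\epsilon_k)/(\beta(\epsilon_k)\lambda_g)\rceil$ is correct), then transfer from $y^*_{0,\mu_2}(x_k)$ to $y^*_{\eta_2,\mu_2}(x_k)$ via the discrepancy bound \eqref{distysL1} used with parameters $(0,\mu_2)$ and $(\eta_2,\mu_2)$, chaining $\|a+b\|^2\le 2\|a\|^2+2\|b\|^2$. This is exactly the proof in the paper.

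The one point to fix is the order in which you chain the triangle inequalities, because it changes the constants and your specific route does not reproduce $\bar A_k$. You split $\|y_{t_k}-y^*_{\eta_2,\mu_2}\|^2$ through $y^*_{0,\mu_2}$ first and then pass through $y^*$; this makes the $(\eta_2,\mu_2)$-discrepancy enter with weight $4$ and gives $\EE\|y_{t_k}-y^*_{\eta_2,\mu_2}\|^2\le 2\,\EE\|y_{t_k}-y^*_{0,\mu_2}\|^2+\tfrac{8L_{1,g}}{\lambda_g}\left(2\mu_2^2 m+\eta_2^2 n\right)$, which after inserting Theorem~\ref{sgd:general} yields a bound of the form $4\epsilon_k\left[\|y_0-y^*\|^2+8(m+4)\sigma_{1,G}^2\right]+4\mu_2^2\left[4\epsilon_k(L_{1,G}^2+L_{1,g}^2)(m+4)^4+\tfrac{L_{1,g}}{\lambda_g}\left((4+2\epsilon_k)m+\tfrac{2\eta_2^2 n}{\mu_2^2}\right)\right]$. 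That is a valid estimate of the same order, but it neither equals nor implies \eqref{y_cnvrg_st}: its $\epsilon_k$-free contributions ($16L_{1,g}\mu_2^2 m/\lambda_g$ and $8L_{1,g}\eta_2^2 n/\lambda_g$) strictly exceed those in $\bar A_k$ ($12L_{1,g}\mu_2^2 m/\lambda_g$ and $4L_{1,g}\eta_2^2 n/\lambda_g$), so you cannot "assemble the $(3+4\epsilon_k)m$ coefficient" along your route. To get exactly $\bar A_k$, do the splits in the paper's order: first $\|y_{t_k}-y^*_{\eta_2,\mu_2}\|^2\le 2\|y_{t_k}-y^*\|^2+2\|y^*-y^*_{\eta_2,\mu_2}\|^2$, then $\|y_{t_k}-y^*\|^2\le 2\|y_{t_k}-y^*_{0,\mu_2}\|^2+2\|y^*-y^*_{0,\mu_2}\|^2$, and finally replace $\|y_0-y^*_{0,\mu_2}\|^2$ inside the Theorem~\ref{sgd:general} bound by $2\|y_0-y^*\|^2+4L_{1,g}\mu_2^2 m/\lambda_g$; this produces the factor $4$ on the SGD term, hence the $8\epsilon_k$ and $32\epsilon_k$ coefficients, and the grouping $(3+4\epsilon_k)m+\eta_2^2 n/\mu_2^2$ in \eqref{y_cnvrg_st}. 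Apart from this constant-level ordering issue, the proposal is sound.
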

\begin{proof}
    To avoid disrupting the flow, the proof is provided at the end of this section (see \S \ref{lemma_sgd:proof}).
\end{proof}

We are now ready to present the main convergence results of Algorithm~\ref{alg_ZBSA}.

\begin{theorem}[Convergence results for the ZDSBA algorithm]\label{theom_main_bsa}
Suppose that $\{\bar y_k, x_k \}_{k \ge 0}$ is generated by Algorithm~\ref{alg_ZBSA}, Assumptions~\ref{fg_assumption} and ~\ref{stochastic_assumption} hold, $t_k$ and $\beta_t$ are chosen according to to \eqref{alpha_beta_st}.
\begin{itemize}
\item [a)] Assume that $\psi$ is strongly convex. Then for any $N \ge 1$, we have
\begin{align}
\bbe[\|x^*-x_{k+1}\|^2]
&\le \left(1-\frac{3\alpha_k \lambda_\psi}{4}\right)\bbe[\|x^*-x_{k}\|^2] + \frac{4 \alpha_k}{\lambda_\psi} \left[\tilde A_k+C_1^2 \bar A_k +A \right] \nn \\
&+ \alpha_k^2 \left[(1+2L_{1,\psi}\alpha_k)\sigma_f^2+2L_{1,\psi} C_2 \alpha_k \right],\label{bg_strcvx_st1}\\
\bbe[\psi(\hat x_N)]-\psi^* &\le \frac{1}{\sum_{k=0}^{N-1} \alpha_k^{-1}} \sum_{k=0}^{N-1} \Biggl\{\frac{1}{2 \alpha_k^2}\EE \left[\|x^*-x_k\|^2-\|x^*-x_{k+1}\|^2\right]\nn\\
& \qquad \qquad \qquad \qquad +\left( \frac{2+\lambda_\psi \alpha_k}{4}\right) \left[(1+2L_{1,\psi}\alpha_k)\sigma_f^2+2L_{1,\psi} C_2 \alpha_k \right]\nn \\
&\qquad \qquad \qquad \qquad +\left(\frac{2+\lambda_\psi \alpha_k}{\lambda_\psi \alpha_k}\right) \left[C_1^2 \bar A_k+\tilde A_k +A \right]\Biggl\},\label{bg_strcvx_st2}
\end{align}
where $\bar A_k$, $A$, $\tilde A$, $\sigma_f^2$, are, respectively, defined in \eqnok{y_cnvrg_st}, \eqnok{grad_diff}, \eqnok{def_tildeAk}, \eqnok{sigma_f:eq}, $C_1, C_2$ are positive constants, and
\beq \label{def_xave2}
\hat x_N = \frac{\sum_{k=0}^{N-1} \alpha_k^{-1} x_k}{\sum_{k=0}^{N-1} \alpha_k^{-1}}
\eeq
\item [b)] If $\psi$ is convex, $X$ is bounded with diameter $D_X$, and 
\beq\label{alpha_beta_st01}
\alpha_k \le \frac{1}{4 L_{1,\psi}}  \quad \forall k \ge 0,
\eeq
we have
\begin{align} \label{bg_cvx_st}
\EE[\psi(\hat x_N)]-\psi^* &\le \frac{1}{\sum_{k=0}^{N-1} \alpha_k^{-1}} \sum_{k=0}^{N-1} \Bigg\{\frac{1}{2 \alpha_k^2}\left[\|x^*-x_k\|^2-\|x^*-x_{k+1}\|^2\right.]\nn\\
& \qquad \qquad \qquad \qquad+\frac{D_X}{\alpha_k} \left(\sqrt{\tilde A_k}+C_1 \sqrt{\bar A_k} +\sqrt{A}\right ) +\frac{3\sigma_f^2+C_2}{2}\Bigg\}.
\end{align}

\item [c)] If $\psi$ is possibly nonconvex, $X=\bbr^n$ (for simplicity), and \eqnok{alpha_beta_st01} holds, we have
\begin{align}\label{bg_nocvx}
\bbe[\|\nabla \psi(x_R)\|^2] \le \frac{2[\psi(x_0)-\psi^*]+ \sum_{k=0}^{N-1} \left[3\alpha_k \left(A+C_1^2 \bar A_k+\tilde A_k\right)+
2L_{1,\psi} \alpha_k^2 (\sigma_f^2+C_2) \right]}{\sum_{k=0}^{N-1} \alpha_k} ,
\end{align}
where the expectation is also taken with respect to the integer random variable $R$ whose probability mass function is given by
\beq
Prob(R=k) = \frac{\alpha_k}{\sum_{\tau=0}^{N-1}\alpha_{\tau}} \qquad k=0,1,\ldots, N-1.\label{def_R}
\eeq
\end{itemize}
\end{theorem}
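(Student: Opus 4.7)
The plan is to treat the outer update \eqref{def_xk_st} as a biased stochastic projected gradient step on the $L_{1,\psi}$-smooth objective $\psi$ (Lemma~\ref{grad_f_error}(c)), and then apply the three canonical one-step SGD arguments for strongly convex, convex, and nonconvex smooth objectives. The core technical step is to produce a clean bias/second-moment bound for the hypergradient estimator $\tilde\nabla\psi_{\bar\eta,\bar\mu}^k$ in \eqref{grad_f_st} of the form
\[
\|\EE_k[\tilde\nabla\psi_{\bar\eta,\bar\mu}^k] - \nabla\psi(x_k)\|^2 \lesssim \tilde A_k + C_1^2\bar A_k + A, \qquad \EE_k\|\tilde\nabla\psi_{\bar\eta,\bar\mu}^k\|^2 \lesssim \|\nabla\psi(x_k)\|^2 + \sigma_f^2 + C_2\alpha_k,
\]
where $\EE_k[\cdot]$ denotes the conditional expectation given the iterates' history through iteration $k$.

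To produce these bounds I would split the estimator error into three natural pieces,
\[
\tilde\nabla\psi_{\bar\eta,\bar\mu}^k - \nabla\psi(x_k) = \bigl(\tilde\nabla\psi_{\bar\eta,\bar\mu}^k - \EE[\tilde\nabla\psi_{\bar\eta,\bar\mu}^k \mid x_k,\bar y_k]\bigr) + \bigl(\EE[\tilde\nabla\psi_{\bar\eta,\bar\mu}^k \mid x_k,\bar y_k] - \nabla\psi_{\bar\eta,\bar\mu}(x_k)\bigr) + \bigl(\nabla\psi_{\bar\eta,\bar\mu}(x_k) - \nabla\psi(x_k)\bigr),
\]
and bound them separately. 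The pure smoothing bias is at most $\sqrt{A}$ by \eqref{grad_diff}. The middle piece splits further into an inner-loop error, which by the smoothed analogue of Lemma~\ref{grad_f_error}(a) satisfies $\|\bar\nabla(f_{\bar\eta,\bar\mu},g_{\bar\eta,\bar\mu})(x_k,\bar y_k) - \nabla\psi_{\bar\eta,\bar\mu}(x_k)\|^2 \le C_1^2\|\bar y_k - y^*_{\eta_2,\mu_2}(x_k)\|^2$ and so has $\EE_k$-mean square bounded by $C_1^2\bar A_k$ via Lemma~\ref{lemma_sgd}, plus a SZHIA bias of the form $\tilde\nabla^2_{xy}G^k(\EE[\tilde H^k]-\bar z_k)$ controlled by \eqref{eqSGD:bias} with $T=b_k$ and the moment bound on $\tilde\nabla^2_{xy}G^k$ from Proposition~\ref{hessMomentBounds}(b). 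The stochastic piece is controlled by combining Proposition~\ref{propMomentBounds} and Corollary~\ref{corMomentBounds} (for $\tilde\nabla_x F^k$), Proposition~\ref{hessMomentBounds}(b) (for $\tilde\nabla^2_{xy}G^k \bar z_k$), and the residual SZHIA variance $2\gamma\bar V/\lambda_g$ from \eqref{eqSGD:Var} multiplied by the second moment of $\tilde\nabla^2_{xy}G^k$, together producing the $\sigma_f^2 + C_2\alpha_k$ form after choosing $\gamma=\Theta(\alpha_k)$.

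With these ingredients in place, the three parts follow routinely. For part (a) I would use nonexpansiveness of the projection to obtain $\|x^*-x_{k+1}\|^2 \le \|x^*-x_k\|^2 - 2\alpha_k\langle\tilde\nabla\psi^k, x_k-x^*\rangle + \alpha_k^2\|\tilde\nabla\psi^k\|^2$, take $\EE_k$, apply strong convexity $\langle\nabla\psi(x_k), x_k-x^*\rangle \ge \psi(x_k)-\psi^* + \tfrac{\lambda_\psi}{2}\|x_k-x^*\|^2$, absorb the bias cross term by Young's inequality $|\langle b_k, x_k-x^*\rangle| \le \tfrac{\lambda_\psi}{8}\|x_k-x^*\|^2 + \tfrac{2}{\lambda_\psi}\|b_k\|^2$, and use $\|\nabla\psi(x_k)\|^2 \le 2L_{1,\psi}(\psi(x_k)-\psi^*)$; this gives \eqref{bg_strcvx_st1}, after which weighting the rearranged inequality by $\alpha_k^{-1}$, telescoping, and invoking convexity together with the definition \eqref{def_xave2} yields \eqref{bg_strcvx_st2}. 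For part (b) I would repeat the same one-step expansion without strong convexity, bound the bias cross term by $D_X\|b_k\| \le D_X(\sqrt{\tilde A_k}+C_1\sqrt{\bar A_k}+\sqrt{A})$, and use \eqref{alpha_beta_st01} to absorb $\alpha_k L_{1,\psi}\|\nabla\psi(x_k)\|^2$, producing \eqref{bg_cvx_st}. For part (c), since $X=\R^n$ the outer step is simply $x_{k+1}=x_k-\alpha_k\tilde\nabla\psi^k$, so the descent lemma gives $\psi(x_{k+1})\le \psi(x_k) - \alpha_k\langle\nabla\psi(x_k),\tilde\nabla\psi^k\rangle + \tfrac{L_{1,\psi}}{2}\alpha_k^2\|\tilde\nabla\psi^k\|^2$; take expectations, split the cross term via $|\langle\nabla\psi(x_k),b_k\rangle|\le \tfrac14\|\nabla\psi(x_k)\|^2 + \|b_k\|^2$, telescope $k=0,\ldots,N-1$, divide by $\sum_k\alpha_k$, and identify the resulting weighted average with $\EE\|\nabla\psi(x_R)\|^2$ via \eqref{def_R} to arrive at \eqref{bg_nocvx}.

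The chief obstacle will be cleanly separating the SZHIA bias and variance from the outer-loop sampling: because $\tilde H^k$ and $\tilde\nabla^2_{xy}G^k$ enter as a product in \eqref{grad_f_st}, the bias computation requires first conditioning on $\tilde\nabla^2_{xy}G^k$ to invoke \eqref{eqSGD:bias}, and then taking expectation of $\|\tilde\nabla^2_{xy}G^k\|^2$ through Proposition~\ref{hessMomentBounds}(b). The residual $2\gamma\bar V/\lambda_g$ term in \eqref{eqSGD:Var} is precisely what forces the $C_2\alpha_k$ piece in the second-moment bound rather than a pure variance floor, so the right interplay between $\gamma$, $b_k$, $\alpha_k$, and the smoothing parameters is the delicate bookkeeping step, with care needed to avoid double-counting the smoothing-dependent terms already captured in $\bar A_k$ and $A$.
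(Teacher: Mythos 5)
Your overall skeleton coincides with the paper's: a three-way decomposition of $\tilde\nabla\psi_{\bar\eta,\bar\mu}^k-\nabla\psi(x_k)$ into a zero-mean stochastic part, an inner-loop/SZHIA part, and the smoothing bias, with the SZHIA bias handled through independence of $\tilde\nabla^2_{xy}G^k$ and $\tilde H^k$ plus \eqref{eqSGD:bias}, the inner-loop error through Lemma~\ref{grad_f_error}.a) and Lemma~\ref{lemma_sgd}, the smoothing bias through Proposition~\ref{zeroth-order bilevel}, and then the standard one-step arguments for the strongly convex, convex, and nonconvex cases.

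The genuine gap is in your second-moment bound $\EE_k\|\tilde\nabla\psi_{\bar\eta,\bar\mu}^k\|^2\lesssim\|\nabla\psi(x_k)\|^2+\sigma_f^2+C_2\alpha_k$. The paper does not use a relative-noise bound of this type: it bounds the deterministic part uniformly, $\|\bar\nabla(f_{\eta_1,\mu_1},g_{\eta_2,\mu_2})(x_k,\bar y_k)\|^2\le C_2=2L_{0,f}^2[1+L_{1,g}^2/\lambda_g^2]$, using Lipschitz continuity of $f$ and strong convexity of $g$, and bounds $\EE_{|k}\|\tilde\delta_k\|^2\le\sigma_f^2$ uniformly (again replacing all partial-gradient norms by $L_{0,f}$ in Propositions~\ref{propMomentBounds} and \ref{hessMomentBounds}). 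This uniformity is what makes \eqref{bg_strcvx_st1} hold with no restriction on $\alpha_k$ — as required, since Corollary~\ref{lemma_main_bsa} applies it with $\alpha_k=4/(\lambda_\psi(k+3))$, which violates any condition of the form $\alpha_k\lesssim 1/L_{1,\psi}$ at early iterations. With your form, the term $\alpha_k^2\|\nabla\psi(x_k)\|^2$ must be absorbed into the negative strong-convexity term via $\|\nabla\psi(x_k)\|^2\le 2L_{1,\psi}(\psi(x_k)-\psi^*)$, which needs exactly such a step-size restriction (and, in part (b), that inequality is not even valid for a constrained minimizer, while the zeroth-order relative-noise coefficient is dimension-dependent, e.g.\ $4(n+2)$ in Proposition~\ref{propMomentBounds}, so \eqref{alpha_beta_st01} alone cannot absorb it without degrading the stated constants). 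Two smaller corrections: the $2L_{1,\psi}C_2\alpha_k$ term in \eqref{bg_strcvx_st1} has nothing to do with choosing $\gamma=\Theta(\alpha_k)$ — $\gamma$ is a fixed SZHIA step whose residual variance $2\gamma\bar V/\lambda_g$ is folded into $\sigma_f^2$, and the extra $\alpha_k$ comes from $L_{1,\psi}\alpha_k\EE_{|k}\|x_{k+1}-x_k\|^2\le 2L_{1,\psi}\alpha_k^3(\sigma_f^2+C_2)$ in \eqref{xk-diff}; and for the SZHIA bias you should pass to $\EE_{|k}[\tilde\nabla^2_{xy}G^k]=\nabla^2_{xy}g^k_{\eta_2,\mu_2}$ (norm at most $L_{1,g}$) rather than invoking a second-moment bound on $\tilde\nabla^2_{xy}G^k$, otherwise you pick up spurious dimension factors and do not recover $\tilde A_k$ as defined in \eqref{def_tildeAk}.
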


\begin{proof}
We first show part a). Noting that subproblem \eqnok{def_xk_st} is strongly convex, we have
\[
\langle \tilde \nabla \psi_{\bar \eta,\bar \mu}^k ,x_{k+1}-x \rangle \le \frac{1}{2 \alpha_k}\left[\|x-x_k\|^2-\|x-x_{k+1}\|^2-\|x_{k+1}-x_k\|^2 \right] \ \ \forall x \in X.
\]
Moreover, noting the smoothness of $\psi$ due to Lemma~\ref{grad_f_error}.c), we have
\beq\label{psi_smooth1}
\psi(x_{k+1}) \le \psi(x_k) + \langle \nabla \psi(x_k),x_{k+1}-x_k \rangle +\frac{L_{1,\psi}}{2}\|x_{k+1}-x_k\|^2.
\eeq
Adding the above inequalities, denoting $\tilde \Delta_k \equiv  \tilde \nabla \psi_{\bar \eta,\bar \mu}^k-\nabla \psi(x_k)$, and re-arranging the terms,  we obtain
\begin{align}
\psi(x_{k+1}) \le \psi(x_k) &+ \langle \nabla \psi(x_k),x-x_k \rangle +\frac{1}{2 \alpha_k}\left[\|x-x_k\|^2-\|x-x_{k+1}\|^2\right] \nn \\
&- \frac{(1-L_{1,\psi} \alpha_k)}{2 \alpha_k} \|x_{k+1}-x_k\|^2 +  \langle \tilde \Delta_k ,x-x_{k+1} \rangle \quad \forall x \in X. \label{BG_proof1}
\end{align}
By setting $x=x^*$, noting strong convexity of $\psi$, and re-arranging the terms,
we obtain
\begin{align}
\frac{\lambda_\psi}{2}\|x^*-x_{k+1}\|^2 &\le \psi(x_{k+1})-\psi(x^*) \le \frac{1}{2 \alpha_k}\left[\|x^*-x_k\|^2-\|x^*-x_{k+1}\|^2\right] - \frac{\lambda_\psi}{2}\|x^*-x_k\|^2 \nn \\
&\qquad \qquad \qquad \qquad \qquad - \frac{(1-L_{1,\psi} \alpha_k)}{2 \alpha_k} \|x_{k+1}-x_k\|^2
 +  \langle \tilde \Delta_k ,x^*-x_{k+1} \rangle. \label{bsa_proof1}
\end{align}
Now, denoting $\tilde \delta_k =\tilde \nabla \psi_{\bar \eta,\bar \mu}^k - \bar \nabla (f_{\eta_1,  \mu_1}, g_{\eta_2,\mu_2})(x_k,\bar y_k)$,
$\delta_k = \bar \nabla (f_{\eta_1,  \mu_1}, g_{\eta_2,\mu_2})(x_k,\bar y_k)-\nabla \psi_{\bar \eta,\bar \mu} (x_k)$, and $\Delta_k = \nabla \psi_{\bar \eta,\bar \mu} (x_k)-\nabla \psi(x_k)$, 
we have $\tilde \Delta_k = \tilde \delta_k+ \delta_k +\Delta_k$, where $\bar \nabla (f,g)(\cdot, \cdot)$ is defined in \eqref{grad_f}. We also use $\bbe_{|k}$ to denote the conditional expectation given all the generated random sequences up to the $k$-th iteration of Algorithm~\ref{alg_ZBSA}. It is then straightforward to see that 
\begin{align}
\bbe_{|k}[\langle \tilde \Delta_k ,x^*-x_{k+1} \rangle] & = \langle \delta_k +\Delta_k , \bbe_{|k}[x^*-x_{k+1}] \rangle + \langle \bbe_{|k}[\tilde \delta_k] ,x^*-x_{k} \rangle
+\bbe_{|k}[\langle\tilde \delta_k ,x_{k}-x_{k+1} \rangle]\nn
\\
&\le \frac{2}{\lambda_\psi} \left[\|\bbe_{|k}[\tilde \delta_k]\|^2+\|\delta_k\|^2 +\|\Delta_k\|^2 \right] +\frac{\lambda_\psi}{8} \left[2\|\bbe_{|k}[x^*-x_{k+1}]\|^2
+\|x^*-x_{k}\|^2\right]\nn\\
& \ \ +\frac{\alpha_k}{2} \bbe_{|k}[\|\tilde \delta_k\|^2]+\frac{1}{2 \alpha_k} \bbe_{|k}[\|x_{k}-x_{k+1}\|^2],\label{delta_bnd}
\end{align}
where the inequality follows from the fact that for arbitrary vectors $u, ~v$ and scalar $k>0$, $\langle u,v \rangle\leq \|u\|\|v\|\leq \frac{k}{2}\|u\|^2 + \frac{1}{2k}\|v\|^2$. If we multiply both sides of \eqref{bsa_proof1} by $2 \alpha_k$, re-arrange the terms, take a conditional expectation of both sides, and take into account \eqref{delta_bnd}, we have
\begin{align}
\bbe_{|k}[\|x^*-x_{k+1}\|^2]
\le& \left(1-\frac{3\alpha_k \lambda_\psi}{4}\right)[\|x^*-x_{k}\|^2] + \frac{4 \alpha_k}{\lambda_\psi} \left[\|\bbe_{|k}[\tilde \delta_k]\|^2\!+\!\|\delta_k\|^2 \!+\!\|\Delta_k\|^2 \right]
+\alpha_k^2 \bbe_{|k}[\|\tilde \delta_k\|^2]\nn \\ 
&+ L_{1,\psi} \alpha_k \bbe_{|k}[\|x_{k}- x_{k+1}\|^2].
\label{bsa_proof2}
\end{align}
Similarly, combining the above relation with \eqref{bsa_proof1}, and \eqref{delta_bnd}, we have
\begin{align}
 \bbe_{|k}[\psi(x_{k+1})]-\psi(x^*) \le& \frac{1}{2 \alpha_k}\left[\|x^*-x_k\|^2- \bbe_{|k}[\|x^*-x_{k+1}\|^2]\right]+  \frac{\alpha_k(2+\lambda_\psi \alpha_k)\bbe_{|k}[\|\tilde \delta_k\|^2]}{4} \nn\\
 &+\left(\frac{2+ \lambda_\psi \alpha_k}{\lambda_\psi}\right) \left[\|\bbe_{|k}[\tilde \delta_k]\|^2+\|\delta_k\|^2 +\|\Delta_k\|^2 \right]\nn\\
 &+ \frac{L_{1,\psi} (2+\lambda_\psi\alpha_k)\bbe_{|k}[\|x_{k}- x_{k+1}\|^2]}{4} .\label{bsa_proof3}
\end{align}
Moreover, by \eqref{def_xk_st}, non-expansiveness of Euclidean projection, and under Assumption~\ref{fg_assumption}, we have 
\begin{align}
\bbe_{|k}[\|x_{k+1}-x_k\|^2] \le \alpha_k^2 \bbe_{|k}[\|\tilde \nabla \psi_{\bar \eta,\bar \mu}^k\|^2]
&\le 2\alpha_k^2 \left(\bbe_{|k}[\|\tilde \delta_k\|^2]+\| \bar \nabla (f_{\eta_1,  \mu_1}, g_{\eta_2,\mu_2})(x_k,\bar y_k)\|^2 \right)\nn\\
&\le 2\alpha_k^2 \left(\bbe_{|k}[\|\tilde \delta_k\|^2]+ C_2\right),\label{xk-diff}
\end{align}
where $C_2 = 2L_{0,f}^2 \left[1+\frac{ L_{1,g}^2} {\lambda_g^2}\right]$. 

We now bound the terms on the RHS of the above inequalities. In the rest of this proof, for simplicity, we use $h^k \equiv h(x_k,\bar y_k)$ for any function $h$. Noting \eqref{gradx} and \eqref{Hessian_xy2}, we have
\begin{align*}
 \bbe_{|k}[\tilde \delta_k] &=   \bbe_{|k}[\tilde \nabla_x F_{\eta_1,\mu_1}^k - \nabla_x f^k_{\eta_1,\mu_1}] \\
&\quad + \bbe_{|k}\left[\nabla^2_{xy} g^k_{\eta_2,\mu_2}[\nabla^2_{yy} g^k_{\eta_2,\mu_2}]^{-1}\nabla_y f^k_{\eta_1,\mu_1}- \tilde \nabla^2_{xy} G_{\eta_2,\mu_2}^k 
\tilde H^k_{\bar \eta,\bar \mu}\right] \\
&= \nabla^2_{xy} g^k_{\eta_2,\mu_2}\left([\nabla^2_{yy} g^k_{\eta_2,\mu_2}]^{-1}\nabla_y f^k_{\eta_1,\mu_1}- \bbe_{|k}\left[\tilde H^k_{\bar \eta,\bar \mu}\right]\right)
+\bbe_{|k}\left[\left(\nabla^2_{xy} g^k_{\eta_2,\mu_2} - \tilde \nabla^2_{xy} G_{\eta_2,\mu_2}^k\right)
\tilde H^k_{\bar \eta,\bar \mu}\right] \\
&= \nabla^2_{xy} g^k_{\eta_2,\mu_2}\left([\nabla^2_{yy} g^k_{\eta_2,\mu_2}]^{-1}\nabla_y f^k_{\eta_1,\mu_1}- \bbe_{|k}\left[\tilde H^k_{\bar \eta,\bar \mu}\right]\right),
\end{align*}
which in the view of Theorem~\ref{sgd:hessInv}.b) (assuming $z_0=0$ for simplicity), implies that 
\beq
\|\bbe_{|k}[\tilde \delta_k]\| \le (1-\gamma \lambda_g)^\frac{b_k}{2}\|\nabla^2_{xy} g^k_{\eta_2,\mu_2}\| \|[\nabla^2_{yy} g^k_{\eta_2,\mu_2}]^{-1}\nabla_y f^k_{\eta_1,\mu_1}\|
\le
\frac{L_{1,g} L_{0,f}}{\lambda_g} (1-\gamma \lambda_g)^\frac{b_k}{2}:= \sqrt{\tilde A_k}.\label{def_tildeAk} \\
\eeq
Similarly, we have 
\begin{align}
\bbe_{|k}[\|\tilde \delta_k\|^2] &\le 
3\bbe_{|k} \left[\left\|\tilde \nabla_x F_{\eta_1,\mu_1}^k - \nabla_x f^k_{\eta_1,\mu_1}\right\|^2\right]
+3 \left\|\nabla^2_{xy} g^k_{\eta_2,\mu_2} \left[\nabla^2_{yy} g^k_{\eta_2,\mu_2}\right]^{-1}\nabla_y f^k_{\eta_1,\mu_1}\right\|^2 \nn \\
&\quad+3\bbe_{|k}\left[\left\|\tilde \nabla^2_{xy} G_{\eta_2,\mu_2}^k \tilde H^k_{\bar \eta,\bar \mu}\right\|^2\right].
\label{delta_expec}
\end{align}
Moreover, noting Proposition~\ref{propMomentBounds}, Proposition~\ref{hessMomentBounds}.b), Theorem~\ref{sgd:hessInv}.a), the fact that $\|B\|_F \le \sqrt{\min(n,m)} \|B\|$ for $B \in \bbr^{n \times m}$, and under Assumption~\ref{fg_assumption}, we have
\begin{align*}
&\bbe_{|k} \left[\left\|\tilde \nabla_x F_{\eta_1,\mu_1}^k - \nabla_x f^k_{\eta_1,\mu_1}\right\|^2\right] = \bbe_{|k} \left[\left\|\tilde \nabla_x F_{\eta_1,\mu_1}^k \right\|^2\right]- \left\|\nabla_x f^k_{\eta_1,\mu_1}\right\|^2\\
&\qquad \qquad \qquad \qquad   \le L_{1,f}^2\left(\eta_1^2(n+6)^3+\frac{\mu_1^4}{\eta_1^2}n(m+4)^2 \right) +  4(n+2) (\sigma_{1,F}^2+L_{0,f}^2)\left(1 +  \frac{\mu_1^2}{\eta_1^2}\right):=\hat \sigma_f^2,\\
&\bbe_{|k}\left[\left\|\tilde \nabla^2_{xy} G_{\eta_2,\mu_2}^k
\tilde H^k_{\bar \eta,\bar \mu}\right\|^2\right] \le 8\left(\frac{ L_{0,f}^2}{\lambda_g^2} + \frac{\gamma \bar V}{\lambda_g}\right)\Bigg\{4 L_{2,g}^2\left[ \frac{\eta_2^4}{\mu_2^2}(n+8)^4 + \frac{2\mu_2^4}{\eta_2^2} n(m+12)^3\right] \\
& \qquad \qquad \qquad \qquad \qquad  ~+ 3 (n+2) (\sigma_{2,G}^2+(n+m)L_{1,g}^2) \left[\frac{\eta_2^2}{\mu_2^2}(n+4)+6 +\frac{5\mu_2^2}{\eta_2^2}(m+2)\right]\Bigg\}:=\hat \sigma_g^2.
\end{align*}
Combining the above inequalities we obtain
\begin{align}
\bbe_{|k}[\|\tilde \delta_k\|^2] \le 
3 \left[\hat \sigma_f^2 +\hat \sigma_g^2 + \left(\frac{L_{1,g} L_{0,f}}{\lambda_g} \right)^2\right]:=\sigma_f^2.\label{sigma_f:eq}
\end{align}
In the view of Proposition~\ref{zeroth-order bilevel}, Lemma~\ref{grad_f_error}.a), and Lemma~\ref{lemma_sgd}, we also have
\begin{align}
\bbe_{|k}[\|\Delta_k\|^2] \le A,\qquad 
\bbe_{|k}[\|\delta^k\|^2] \le C_1^2 \EE_{|k} \left[\|y_{t_k} - y_{\eta_2,\mu_2}^*(x_k) \|^2 \right] \le C_1^2 \bar A_k.\label{def_A_delta}
\end{align}
Combining the above bounds with \eqref{bsa_proof2}, we obtain \eqref{bg_strcvx_st1}. Similarly, we have
\begin{align*}
 \bbe_{|k}[\psi(x_{k+1})]-\psi(x^*) \le& \frac{1}{2 \alpha_k}\left[\|x^*-x_k\|^2- \bbe_{|k}[\|x^*-x_{k+1}\|^2]\right]+ \left(\frac{2+ \lambda_\psi \alpha_k}{\lambda_\psi}\right) \left[C_1^2 \bar A_k+\tilde A_k +A \right]\nn\\
 &+\alpha_k \left( \frac{2+\lambda_\psi \alpha_k}{4}\right) \left[(1+2L_{1,\psi}\alpha_k)\sigma_f^2+2L_{1,\psi} C_2 \alpha_k \right].
\end{align*}
Dividing both sides by $\alpha_k$, summing them up, and taking a total expectation, we obtain \eqref{bg_strcvx_st2}.

We now show part b). Similar to \eqnok{delta_bnd}, by boundedness of $X$, \eqref{xk-diff}, \eqref{def_tildeAk} , \eqref{sigma_f:eq}, and \eqref{def_A_delta}, we obtain
\begin{align*}
\bbe_{|k}[\langle \tilde \Delta_k ,x^*-x_{k+1} \rangle] & \le
(\|\delta_k +\Delta_k\|) \|\bbe_{|k}[x^*-x_{k+1}]\| + \| \bbe_{|k}[\tilde \delta_k]\|\|x^*-x_{k}\|
+\bbe_{.|k}[\|\tilde \delta_k\| \|x_{k}-x_{k+1}\|]\\
&\le D_X\left(\|\bbe_{|k}[\tilde \delta_k]\|+\|\delta_k\| +\|\Delta_k\| \right) +\alpha_k \bbe_{|k}[\|\tilde \delta_k\|^2]+\frac{1}{4 \alpha_k} \bbe_{|k}[\|x_{k}-x_{k+1}\|^2]\nn\\
&\le  D_X\left(\sqrt{\tilde A_k}+C_1 \sqrt{\bar A_k} +\sqrt{A} \right) +\frac{\alpha_k}{2} \left(3\sigma_f^2+C_2\right),
\end{align*}
which together with \eqref{bsa_proof1} (with $\lambda_\psi =0$ due to the convexity of $\psi$),
imply \eqnok{bg_cvx_st}.

To show part c), assuming that $X=\bbr^n$, noting \eqref{def_xk_st} and \eqref{psi_smooth1}, we obtain 
\begin{align*}
\bbe_{|k}[\psi(x_{k+1})] &\le \psi(x_k) -\alpha_k  \langle \nabla \psi(x_k) , \bbe_{|k} [\tilde \nabla \psi_{\bar \eta,\bar \mu}^k] \rangle
+\frac{L_{1,\psi} \alpha_k^2}{2}\bbe_{|k}[\|\tilde \nabla \psi_{\bar \eta,\bar \mu}^k\|^2]\\
&= \psi(x_k) -\alpha_k \|\nabla \psi(x_k)\|^2 -\alpha_k \langle \nabla \psi(x_k), \bbe_{|k}[\tilde \Delta_k] \rangle
+\frac{L_{1,\psi} \alpha_k^2}{2}\bbe_{|k}[\|\tilde \nabla \psi_{\bar \eta,\bar \mu}^k\|^2]\\
&\le \psi(x_k) -\frac{\alpha_k}{2}\|\nabla \psi(x_k)\|^2 +\frac{\alpha_k}{2} \left\|\bbe_{|k}[\tilde \Delta_k]\right\|^2
+\frac{L_{1,\psi} \alpha_k^2}{2}\bbe_{|k}[\|\tilde \nabla \psi_{\bar \eta,\bar \mu}^k\|^2],
\end{align*}
which similar to the above-mentioned arguments and the fact that
\[
\bbe[\|\nabla \psi(x_R)\|^2] = \sum_{k=0}^{N-1} \frac{\alpha_k}{\sum_{k'=0}^{N-1}\alpha_{k'}}\bbe[\|\nabla \psi(x_k)\|^2],
\]
imply \eqref{bg_nocvx}.
\end{proof}

In the next result, we specialize the rate of convergence of Algorithm~\ref{alg_ZBSA}.

\begin{corollary} \label{lemma_main_bsa}
Suppose that $\{x_k ,\bar y_k, \}_{k \ge 0}$ is generated by Algorithm~\ref{alg_ZBSA}, Assumptions~\ref{fg_assumption} and \ref{stochastic_assumption} hold. Also, assume that $t_k$ and $\beta_k$ are set to \eqnok{alpha_beta_st}, respectively, and smoothing parameters are set to
\begin{align}\label{smooth_params}
    \eta_1=\mu_1 = \frac{1}{\sqrt{(N+1)(n+m)^3}}, \qquad \eta_2=\mu_2 = \frac{1}{\sqrt{(N+1)(n+m)^5}}.
\end{align}
\begin{itemize}
\item [a)] Assume that $\psi$ is strongly convex and
\beq \label{alpha_beta1_st}
\epsilon_k =\frac{(n+m)^2}{k+1}, \ \ b_k = \left\lceil \log_{1-\gamma \lambda_g} \frac{1}{k+1} \right\rceil, \ \ \alpha_k = \frac{4}{\lambda_\psi (k+3)} \ \ \forall k \ge 0.
\eeq
Then, for any $N \ge 1$, we have
\begin{align}
\EE[\|x^*-x_N\|^2] &= {\cal O}\left(\frac{(n+m)^3}{ \lambda_\psi^2 N}\right), \label{bg_strcvx_seq_st2}\\
\bbe[\psi(\hat x_N)]-\psi^* &= {\cal O}\left(\frac{\lambda_\psi \|x^*-x_0\|^2}{N^2}+\frac{(n+m)^3}{\lambda_\psi^2 N}\right).\label{bg_strcvx_st4}
\end{align}

\item [b)] If $\psi$ is convex, $X$ is bounded, an iteration limit $N$ is given, and
\beq \label{alpha_beta1_st2}
\epsilon_k =  \frac{(n+m)^2}{k+1}, \ \ b_k =  \left \lceil \log_{1-\gamma \lambda_g} \frac{1}{k+1} \
\right\rceil , \ \ \alpha_k = \frac{1}{2L_{1,\psi} \sqrt{(n+m)^3(N+1)}},
\eeq
for any given $N \ge 1$, we have
\beq \label{bg_cvx1_st}
\bbe[\psi(\hat x_N)]-\psi^* = 
{\cal O}\left((\|x^*-x_0\|^2 + D_X)\sqrt{\frac{(n+m)^3}{N}}\right).
\eeq

\item [c)] If $\psi$ is possibly nonconvex, $X=\bbr^n$ (for simplicity), $\alpha_k$ is set to \eqnok{alpha_beta1_st2}, and
\beq\label{alpha_beta1_st3}
\epsilon_k =  \sqrt{\frac{n+m}{k+1}}, \qquad  b_k =  \left\lceil \log_{1-\gamma \lambda_g} \frac{1}{\sqrt{k+1}} \right\rceil,
\eeq
we have
\beq\label{bg_nocvx1_st}
\bbe\left[\|\nabla \psi(x_R)\|^2\right] = 
{\cal O}\left((\psi(x_0)-\psi^*)\sqrt{\frac{(n+m)^3}{N}}\right),
\eeq
where the expectation is taken with respect to the integer random variable $R$ uniformly distributed over $\{0,1,\ldots, N-1\}$.
\end{itemize}
\end{corollary}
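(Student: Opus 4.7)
The plan is to specialize the general convergence bounds of Theorem~\ref{theom_main_bsa} by substituting the smoothing parameters \eqref{smooth_params} and the algorithm parameters given in \eqref{alpha_beta1_st}, \eqref{alpha_beta1_st2}, and \eqref{alpha_beta1_st3}, and to reduce each resulting expression to its leading order in $n+m$, $N$, and $k$.

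The first preparatory step is to derive order estimates for the constituent error quantities $A$, $\tilde A_k$, $\bar A_k$, $\sigma_f^2$, and $C_2$ under the chosen smoothing parameters. Since $\eta_1 = \mu_1$ and $\eta_2 = \mu_2$, the ratios $\mu_1/\eta_1$ and $\mu_2/\eta_2$ collapse to unity, which simplifies the expressions in \eqref{grad_diff}, \eqref{y_cnvrg_st}, and \eqref{sigma_f:eq}. Specifically, the dominant $\eta_1(n+3)^{3/2}$ term in $\sqrt{A}$ yields $A = \mathcal{O}(1/N)$; the choice of $b_k$ in \eqref{def_tildeAk} gives $\tilde A_k = \mathcal{O}(1/(k+1))$ in parts (a), (b) and $\tilde A_k = \mathcal{O}(1/\sqrt{k+1})$ in part (c); the dominant contribution to $\bar A_k$ is the term $8\epsilon_k(m+4)\sigma_{1,G}^2$, since the $\mu_2^2$-weighted terms are suppressed by the extra $(n+m)^5$ factor in $\mu_2^{-2}$, giving $\bar A_k = \mathcal{O}((n+m)^3/(k+1))$ for parts (a), (b) and $\bar A_k = \mathcal{O}((n+m)^{3/2}/\sqrt{k+1})$ for part (c); and the expansion of $\hat\sigma_f^2 + \hat\sigma_g^2$ yields $\sigma_f^2 = \mathcal{O}((n+m)^3)$, while $C_2$ stays a problem-dependent constant. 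One needs to verify that the $\gamma\bar V/\lambda_g$ factor appearing in $\hat\sigma_g^2$ is $\mathcal{O}(1)$, which follows from the step-size constraint $\gamma < \lambda_g/(2V_H)$ in Theorem~\ref{sgd:hessInv} together with the matching leading orders of $\bar V$ and $V_H$ from \eqref{def_VH}--\eqref{def_Vbar}.

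For part (a), I substitute $\alpha_k = 4/(\lambda_\psi(k+3))$ into \eqref{bg_strcvx_st1}. The aggregate noise contribution is $(\alpha_k/\lambda_\psi)(\tilde A_k + C_1^2 \bar A_k + A) + \alpha_k^2 \sigma_f^2 = \mathcal{O}((n+m)^3/(\lambda_\psi^2 k^2))$, so the recursion has the canonical form $a_{k+1} \le (1 - 3/(k+3))a_k + D/(k+3)^2$ with $D = \mathcal{O}((n+m)^3/\lambda_\psi^2)$, and a standard induction yields \eqref{bg_strcvx_seq_st2}. For the function gap \eqref{bg_strcvx_st4}, I substitute into \eqref{bg_strcvx_st2} and handle the telescoping-like term $\sum_k (1/(2\alpha_k^2))[\|x^* - x_k\|^2 - \|x^* - x_{k+1}\|^2]$ via Abel summation: the boundary term $\|x^* - x_0\|^2/(2\alpha_0^2)$ divided by $\sum_k \alpha_k^{-1} = \Theta(\lambda_\psi N^2)$ contributes the $\lambda_\psi \|x^* - x_0\|^2/N^2$ piece, while the differences $1/\alpha_k^2 - 1/\alpha_{k-1}^2 = \Theta(\lambda_\psi^2 k)$ multiplied by the $\mathcal{O}((n+m)^3/(\lambda_\psi^2 k))$ bound just established produce a total of order $\lambda_\psi N(n+m)^3$, yielding the $(n+m)^3/(\lambda_\psi N)$ piece after division; the remaining sums in \eqref{bg_strcvx_st2} contribute at the same order by analogous bookkeeping.

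For parts (b) and (c), the constant step-size choice makes the sums far simpler. In part (b), the telescoping term collapses to $\|x^* - x_0\|^2/(2\alpha^2)$, and $\sum_k \sqrt{\bar A_k} = \mathcal{O}((n+m)^{3/2}\sqrt{N})$ via the standard estimate $\sum_k 1/\sqrt{k+1} = \Theta(\sqrt{N})$; dividing the numerator in \eqref{bg_cvx_st} by $\sum_k \alpha_k^{-1} = \Theta(L_{1,\psi} N^{3/2}(n+m)^{3/2})$ gives \eqref{bg_cvx1_st}. In part (c), each of $\sum_k \alpha_k A$, $\sum_k \alpha_k \tilde A_k$, $\sum_k \alpha_k \bar A_k$, and $L_{1,\psi}\sum_k \alpha_k^2 \sigma_f^2$ reduces to an $\mathcal{O}(1)$ quantity under $\alpha_k = \Theta(1/(L_{1,\psi}\sqrt{N(n+m)^3}))$ and the orders derived above, and dividing by $\sum_k \alpha_k = \Theta(\sqrt{N/(n+m)^3}/L_{1,\psi})$ yields \eqref{bg_nocvx1_st}. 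I expect the main obstacle to be the first preparatory step — carefully verifying that every cross-term in $\hat\sigma_f^2$, $\hat\sigma_g^2$, and $\bar A_k$ is indeed dominated by the claimed leading orders once the specific ratios induced by $\eta_1 = \mu_1$ and $\eta_2 = \mu_2$ are enforced — rather than the ensuing recursion and summation manipulations, which are routine once the right error orders are in hand.
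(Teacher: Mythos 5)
Your proposal is correct and follows essentially the same route as the paper's proof: specialize Theorem~\ref{theom_main_bsa} under \eqref{smooth_params}, establish the orders $A=\mathcal{O}(1/N)$, $\tilde A_k=\mathcal{O}((k+1)^{-1})$ (or $(k+1)^{-1/2}$ in part (c)), $\bar A_k=\mathcal{O}(m(n+m)^2\epsilon_k)$, $\sigma_f^2=\mathcal{O}(n(n+m)^2)$ with $\gamma\bar V/\lambda_g=\mathcal{O}(1)$ from $\gamma\le\lambda_g/(2V_H)$, and then carry out the recursion/summation bookkeeping exactly as the paper does. The only cosmetic differences are that you solve the strongly convex recursion by direct induction where the paper multiplies by $(k+1)(k+2)(k+3)$ and telescopes, and your intermediate total ``$\lambda_\psi N(n+m)^3$'' in the Abel-summation step carries a spurious $\lambda_\psi$ (the correct total is $\mathcal{O}(N(n+m)^3)$, which after dividing by $\Theta(\lambda_\psi N^2)$ gives the $(n+m)^3/(\lambda_\psi N)$ piece you state), a harmless slip that does not affect the argument.
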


\begin{proof}
First, note that by \eqref{bg_strcvx_st1} and \eqnok{alpha_beta1_st}, we have
\beq\label{x_ineq}
\bbe \left[\|x^*-x_{k+1}\|^2\right] \le \left(1-\frac{3}{k+3}\right) \bbe \left[\|x^*-x_k\|^2\right] + \frac{16}{\lambda_\psi^2 (k+3)} \left(\tilde A_k+C_1^2 \bar A_k +A  +\frac{C_3}{k+3}\right),
\eeq
where 
\beq\label{C3}
C_3 
 = \left(1+\frac{8L_{1,\psi}}{\lambda_\psi(k+3)}\right)\sigma_f^2 +\frac{8L_{1,\psi} C_2}{\lambda_\psi (k+3)} ={\cal O}\left(\frac{\sigma_f^2}{\lambda_\psi} \right)
\eeq
due to the definition of $C_2$ right after \eqref{xk-diff}.
Now, noting Theorem~\ref{sgd:hessInv}, \eqref{y_cnvrg_st}, \eqref{def_tildeAk}, and \eqref{alpha_beta1_st}, we have 
\begin{align}
\tilde A_k &= \left(\frac{L_{1,g} L_{0,f}}{\lambda_g}\right)^2 (1-\gamma \lambda_g)^{b_k} = {\cal O} \left(\frac{1}{k+1}\right), \qquad \qquad \bar A_k ={\cal O} \left(\frac{m(n+m)^2}{k+1 }\right),\label{A1}\\
A &={\cal O} \left(\frac{1}{N+1}\right),\qquad \qquad
\frac{2\gamma \bar V}{\lambda_g} \le \frac{ \bar V}{V_H} ={\cal O}(1) \to \sigma_f^2 ={\cal O}\left(n (n+m)^2\right).\label{A2}
\end{align}
Multiplying both sides of \eqref{x_ineq} by $(k+1)(k+2)(k+3)$, summing them up and noting the facts that
\[
    \sum_{k=0}^{N-1} (k+1)(k+2)  \left(\tilde A_k+C_1^2 \bar A_k +A  +\frac{C_3}{(k+3)}\right) ={\cal O}\left(N^2 (n+m)^3\right)
    \]
due to \eqref{C3}, \eqref{A1}, and \eqref{A2}, we obtain \eqref{bg_strcvx_seq_st2}
Second, noting the above bound on the generated sequences and \eqnok{alpha_beta1_st}, we have
\begin{align*}
&\sum_{k=0}^{N-1} \frac{1}{2\alpha_k ^2} \left[\|x^*-x_k\|^2-\|x^*-x_{k+1}\|^2\right] \\
&= \frac{\lambda_\psi^2}{4}\|x^*-x_0\|^2+ \sum_{k=1}^{N-1} \left(\frac{1}{2\alpha_k^2} - \frac{1}{2\alpha_{k-1}^2} \right) \|x^*-x_k\|^2- \frac{1}{2\alpha_N^2}\|x^*-x_N\|^2 \\
&=\frac{\lambda_\psi^2}{16} \left[5\|x^*-x_0\|^2+ \sum_{k=1}^{N-1} (2k+5) \|x^*-x_k\|^2 \right]- \frac{1}{2\alpha_N^2}\|x^*-x_N\|^2 \nn \\
&=  {\cal O}\left(\lambda_\psi^2 \|x^*-x_0\|^2+N (n+m)^3\right), \\
&\sum_{k=0}^{N-1} \left(\frac{2+\lambda_\psi \alpha_k}{\lambda_\psi \alpha_k}\right) \left[C_1^2 \bar A_k+\tilde A_k +A\right] = {\cal O}\left(N m(n+m)^2\right), \qquad \sum_{k=0}^{N-1} \alpha_k^{-1} = {\cal O }\left(\lambda_\psi N^2\right),
\end{align*}
which together with \eqref{bg_strcvx_st2}, imply \eqref{bg_strcvx_st4}.
Similarly, we obtain \eqref{bg_cvx1_st} and \eqref{bg_nocvx1_st} for which we skip the details.
\end{proof}

Note that the total noisy evaluations of $f$ and $g$ used in Algorithm~\ref{alg_ZBSA}, are, respectively, bounded by ${\cal O} (N+\sum_{k=0}^{N} b_k)$ and ${\cal O} (\sum_{k=0}^{N} (t_k +b_k))$. Thus, \eqref{alpha_beta_st}, \eqref{alpha_beta1_st}, and \eqref{bg_strcvx_st4} imply that Algorithm~\ref{alg_ZBSA} can find an $\epsilon$-optimal solution of problem~\ref{main_prob_st} within at most ${\cal O}(\frac{(n+m)^3}{\lambda_\psi^2 \epsilon}\log (\frac{n+m}{\lambda_\psi \epsilon}))$ and ${\cal O}(\frac{(n+m)^4}{\lambda_\psi^4 \epsilon^2 }\log (\frac{n+m}{\lambda_\psi \epsilon}))$ number of the noisy evaluations of $f$ and $g$, respectively. Moreover, \eqref{bg_cvx1_st} implies that when the objective function is only convex, the aforementioned complexity bounds increase to ${\cal O}(\frac{(n+m)^3}{\epsilon^2}\log (\frac{n+m}{\epsilon}))$ and ${\cal O}(\frac{(n+m)^4}{\epsilon^4 }\log (\frac{n+m}{\epsilon}))$. Moreover, when $f$ is possibly nonconvex, \eqref{bg_nocvx1_st} implies that Algorithm~\ref{alg_ZBSA} achieves complexity bounds of ${\cal O}(\frac{(n+m)^3}{\epsilon^2}\log (\frac{n+m}{\epsilon}))$ and ${\cal O}\left(\frac{(n+m)^4}{\epsilon^3 }\log \left(\frac{n+m}{\epsilon}\right)\right)$ to achieve an $\epsilon$-stationary point of the problem. To the best of our knowledge, this is the first time that sample complexity bounds have been established for a \emph{fully} zeroth-order stochastic approximation method to address bilevel optimization problems.

It should be also noted that some of the above sample complexity bounds can be improved in terms of the dependency on the target accuracy $\epsilon$, when at least the first-order information of $f$ and $g$ are accessible. The focus of this paper is addressing bilevel problems with the minimum information about the upper/lower objectives, and this alternative problem is left open as a future research direction. Another open direction of research is exploring the  dependency of the sample complexity on the problem dimensions, and the existence of alternative approaches with smaller share of dimensionality.

\subsection{Proof of Theorem \ref{sgd:general}}\label{sgd:general:proof}
The proof is basically a customization of the following lemma, which is proved at the end of this section. 
\begin{lemma}\label{sgd:generalLem}
		Consider the minimization \eqref{eq:SGDMinGenShort}, and the updates \eqref{eq:SGDGenShort}. Assume that $g$ and $G$ have Lipschitz continuous gradients, and $g$ is strongly convex. Assume that for all $y$,
\begin{align*}
		\EE \left\|\tilde\nabla_y G_{0,\mu_2}(\tilde x,y,\zeta)\right \|^2 & \leq \alpha_1 + \alpha_2 \EE \left\|\nabla_y G_{0,\mu_2}(\tilde x,y,\zeta)\right \|^2,
\end{align*} 
  and 
  \begin{align*}
		\EE \left\|\nabla_y G_{0,\mu_2}(\tilde x,y_{0,\mu_2}^*,\zeta)\right\|^2 &\leq \alpha_3,
		\end{align*} 
  for some non-negative constants $\alpha_1, \alpha_2$ and $\alpha_3$. For any $\epsilon>0$, picking 
  \[
   \beta_t = \beta(\epsilon) =  \min\left\{\frac{1}{2\alpha_2 L_{1,G}},\epsilon\lambda_g,\frac{1}{\lambda_g}\right\},~~\mbox{and}~~ T = \left\lceil \frac{1}{\beta(\epsilon) \lambda_g}\log\frac{1}{\epsilon}\right\rceil,  
  \]
  guarantees that $\EE \| y_T - y_{0,\mu_2}^*\|^2\leq \epsilon \left(\|y_0-y_{0,\mu_2}^*\|^2+ \alpha_1 + 2\alpha_2\alpha_3\right)$. 
	\end{lemma}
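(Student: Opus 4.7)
The plan is to carry out a standard strongly-convex SGD analysis on the smoothed inner objective $g_{0,\mu_2}(\tilde x,\cdot)$, plugging in the two structural bounds (on the stochastic-gradient second moment and on the noise at the optimum) that the lemma assumes. First, I would record the ingredients that the Gaussian smoothing machinery gives for free: by Proposition \ref{general_prop}, $g_{0,\mu_2}(\tilde x,\cdot)$ is $\lambda_g$-strongly convex and $L_{1,g}$-smooth, $G_{0,\mu_2}(\tilde x,\cdot,\zeta)$ is $L_{1,G}$-smooth in $y$, and by the Stein-identity calculation (cf.\ equation \eqref{grady}) the stochastic oracle $\tilde\nabla_y G_{0,\mu_2}(\tilde x,y_t,\zeta_t)$ is an unbiased estimator of $\nabla g_{0,\mu_2}(\tilde x, y_t)$.

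Next I would expand $\|y_{t+1}-y_{0,\mu_2}^*\|^2$ using the update \eqref{eq:SGDGenShort}, take the conditional expectation given $y_t$, and invoke strong convexity in the form $\langle y_t-y_{0,\mu_2}^*,\nabla g_{0,\mu_2}(\tilde x, y_t)\rangle\geq \lambda_g\|y_t-y_{0,\mu_2}^*\|^2$ to handle the linear term. For the quadratic term, I would combine the lemma's second-moment hypothesis with a triangle-inequality bound around the optimum: since $G_{0,\mu_2}(\tilde x,\cdot,\zeta)$ is $L_{1,G}$-smooth,
\begin{equation*}
\EE\|\nabla_y G_{0,\mu_2}(\tilde x,y_t,\zeta)\|^2 \leq 2L_{1,G}^2\|y_t-y_{0,\mu_2}^*\|^2+2\alpha_3,
\end{equation*}
so that
\begin{equation*}
\EE\|\tilde\nabla_y G_{0,\mu_2}(\tilde x,y_t,\zeta_t)\|^2\leq \alpha_1+2\alpha_2\alpha_3+2\alpha_2 L_{1,G}^2\|y_t-y_{0,\mu_2}^*\|^2.
\end{equation*}
Assembling the pieces yields a one-step recursion of the form
\begin{equation*}
\EE\bigl[\|y_{t+1}-y_{0,\mu_2}^*\|^2\bigm| y_t\bigr]\leq \bigl(1-2\beta\lambda_g+2\beta^2\alpha_2 L_{1,G}^2\bigr)\|y_t-y_{0,\mu_2}^*\|^2+\beta^2(\alpha_1+2\alpha_2\alpha_3).
\end{equation*}

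Finally, I would choose the step size to make this a clean geometric contraction. Using $\beta\leq 1/(2\alpha_2 L_{1,G})$ together with $\beta\leq 1/\lambda_g$, one argues that the contraction coefficient is at most $1-\beta\lambda_g$; iterating $T$ times gives
\begin{equation*}
\EE\|y_T-y_{0,\mu_2}^*\|^2 \leq (1-\beta\lambda_g)^T\|y_0-y_{0,\mu_2}^*\|^2+\frac{\beta(\alpha_1+2\alpha_2\alpha_3)}{\lambda_g}.
\end{equation*}
The choice $T=\lceil\frac{1}{\beta\lambda_g}\log\frac{1}{\epsilon}\rceil$ makes $(1-\beta\lambda_g)^T\leq e^{-\beta\lambda_g T}\leq\epsilon$, while the third term in the definition of $\beta(\epsilon)$, namely $\beta\leq\epsilon\lambda_g$, forces $\beta/\lambda_g\leq\epsilon$, so the residual term is at most $\epsilon(\alpha_1+2\alpha_2\alpha_3)$. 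Adding the two contributions produces the stated inequality.

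The main technical obstacle is the bookkeeping around the contraction factor: after substituting the second-moment bound, one must verify that the $2\beta^2\alpha_2 L_{1,G}^2$ perturbation can be absorbed into a single $-\beta\lambda_g$ drift using only the stated step-size constraints (and without any extra assumption comparing $\lambda_g$ and $L_{1,G}$). Handling this carefully — likely by writing the recursion on $\|y_t-y_{0,\mu_2}^*\|^2$ and using the identity $2\beta\alpha_2 L_{1,G}^2\leq L_{1,G}$ in conjunction with $\beta\lambda_g\leq 1$ to absorb the quadratic-in-$\beta$ term — is the one place where the generality of the lemma (three free constants $\alpha_1,\alpha_2,\alpha_3$) forces a little more care than a textbook strongly-convex SGD proof.
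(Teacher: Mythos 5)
Your skeleton (one-step expansion, strong convexity for the drift, the $(\alpha_1,\alpha_2)$ hypothesis for the noise, geometric iteration, then $\beta\le \epsilon\lambda_g$ to kill the residual) is the same as the paper's, but the way you bound the quadratic term leaves a genuine gap --- the very one you flag at the end and do not close. Bounding $\EE\|\nabla_y G_{0,\mu_2}(\tilde x,y_t,\zeta)\|^2\le 2L_{1,G}^2\|y_t-y_{0,\mu_2}^*\|^2+2\alpha_3$ yields the contraction factor $1-2\beta\lambda_g+2\beta^2\alpha_2 L_{1,G}^2$, and under the stated constraint $\beta\le 1/(2\alpha_2 L_{1,G})$ the perturbation is only controlled by $\beta L_{1,G}$; absorbing it into a $-\beta\lambda_g$ drift would require $L_{1,G}\le\lambda_g$ (and even just avoiding expansion requires $L_{1,G}\le 2\lambda_g$), which fails in general since $\lambda_g\le L_{1,g}\le L_{1,G}$ is the typical regime. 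To make your recursion contract one needs $\beta\le \lambda_g/(2\alpha_2 L_{1,G}^2)$, which is not implied by $\beta(\epsilon)=\min\{1/(2\alpha_2 L_{1,G}),\epsilon\lambda_g,1/\lambda_g\}$ for arbitrary $\epsilon>0$. So the argument as proposed does not establish the lemma with the stated step size and $T$; the invoked trick ``$2\beta\alpha_2 L_{1,G}^2\le L_{1,G}$ together with $\beta\lambda_g\le 1$'' does not resolve it.

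The paper closes exactly this hole by measuring both the drift and the noise through the \emph{function-value gap} rather than the squared distance. Strong convexity is kept in the form $\langle y_t-y_{0,\mu_2}^*,\nabla_y g_{0,\mu_2}(\tilde x,y_t)\rangle\ge g_{0,\mu_2}(\tilde x,y_t)-g_{0,\mu_2}(\tilde x,y_{0,\mu_2}^*)+\tfrac{\lambda_g}{2}\|y_t-y_{0,\mu_2}^*\|^2$, and the stochastic gradient at $y_t$ is bounded by co-coercivity of the smoothed stochastic function (Theorem 2.1.15 of Nesterov):
\begin{equation*}
\left\|\nabla_y G_{0,\mu_2}(\tilde x,y_t,\zeta)-\nabla_y G_{0,\mu_2}(\tilde x,y_{0,\mu_2}^*,\zeta)\right\|^2\le 2L_{1,G}\left(G_{0,\mu_2}(\tilde x,y_t,\zeta)-G_{0,\mu_2}(\tilde x,y_{0,\mu_2}^*,\zeta)-\langle \nabla_y G_{0,\mu_2}(\tilde x,y_{0,\mu_2}^*,\zeta),y_t-y_{0,\mu_2}^*\rangle\right),
\end{equation*}
where the cross term vanishes in expectation because $\EE_\zeta\nabla_y G_{0,\mu_2}(\tilde x,y_{0,\mu_2}^*,\zeta)=\nabla_y g_{0,\mu_2}(\tilde x,y_{0,\mu_2}^*)=0$. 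The noise perturbation then enters as $4\beta^2\alpha_2 L_{1,G}$ times the \emph{same} gap $g_{0,\mu_2}(\tilde x,y_t)-g_{0,\mu_2}(\tilde x,y_{0,\mu_2}^*)$, so the total gap coefficient is $-2\beta(1-2\beta\alpha_2 L_{1,G})\le 0$ precisely under $\beta\le 1/(2\alpha_2 L_{1,G})$, with no comparison between $L_{1,G}$ and $\lambda_g$ needed, leaving the clean recursion $(1-\beta\lambda_g)\EE\|y_t-y_{0,\mu_2}^*\|^2+\beta^2(\alpha_1+2\alpha_2\alpha_3)$. Your final steps (geometric sum bounded by $\beta(\alpha_1+2\alpha_2\alpha_3)/\lambda_g$, the choice of $T$, and $\beta\le\epsilon\lambda_g$) coincide with the paper once this recursion is in hand.
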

It only remains to specify the values for $\alpha_1, \alpha_2$ and $\alpha_3$ in Lemma \ref{sgd:generalLem}. Since, the smoothing is only applied along the $y$ direction, we can use some prior results in Gaussian smoothing. More specifically, by Corollary 1 in \cite{nesterov2017random} we have
\begin{align}\label{alpha1alpha2}
		\EE \left\|\tilde\nabla_y G_{0,\mu_2}(\tilde x,y,\zeta)\right \|^2 & \leq \frac{3\mu_2^2 L_{1,G}^2}{2}(m+5)^3 + 4(m+4) \EE \left\|\nabla_y G_{0,\mu_2}(\tilde x,y,\zeta)\right \|^2,
\end{align} 
which characterizes $\alpha_1$ and $\alpha_2$. To characterize $\alpha_3$, note that 
by Proposition \ref{propNestApprox_stch}(a), we have
\begin{align*} 
   \EE \left\|\nabla_y G_{0,\mu_2}(\tilde x,y_{0,\mu_2}^*,\zeta)\right\|^2 &= \EE \left\|\nabla_y G_{0,\mu_2}(\tilde x,y_{0,\mu_2}^*,\zeta) - \nabla_y g_{0,\mu_2}(\tilde x,y_{0,\mu_2}^*)\right\|^2\\
   &\le \frac{\mu_2^2 (L_{1,G}+L_{1,g})^2}{2}(m+3)^3+2\sigma_{1,G}^2. 
\end{align*}
which together with \eqref{alpha1alpha2}, imply that the term $\alpha_1 + 2\alpha_2\alpha_3$ corresponds to
\begin{align}\notag 
    \frac{3\mu_2^2 L_{1,G}^2}{2}(m+5)^3 &+ 8(m+4)\left( \mu_2^2 (L_{1,G}^2+L_{1,g}^2)(m+3)^3 + 2\sigma_{1,G}^2\right)\\&\leq 8\mu_2^2(L_{1,G}^2+L_{1,g}^2)(m+4)^4 + 16(m+4)\sigma_{1,G}^2, \label{rhsSGDStep}
\end{align}
and if we use the RHS of \eqref{rhsSGDStep} as $\alpha_1 + 2\alpha_2\alpha_3$ in Lemma \ref{sgd:generalLem}, we get the proposed step-size in \eqref{eq:SDGStepSize}, which completes the proof. 
\subsubsection{Proof of Lemma \ref{sgd:generalLem}}


Let $\EE_{|t}$ denote the conditional expectation given $y_t$. Subtracting $y_{0,\mu_2}^*$ from both sides of \eqref{eq:SGDGenShort} and taking a squared norm gives
	\begin{align*}
		\EE_{|t}\|y_{t+1}-y_{0,\mu_2}^*\|^2 = \|y_t-y_{0,\mu_2}^*\|^2 -2\beta(y_t-y_{0,\mu_2}^*)^\top  \nabla_y g_{0,\mu_2}(y_k)+ \beta^2 \EE_{|t} \|\tilde\nabla_y G_{0,\mu_2}(\tilde x,y_t,\zeta_t)\|^2. 
	\end{align*}
By Proposition \ref{general_prop}, the strong convexity of $g(\tilde x,y)$ implies the strong convexity of  $g_{0,\mu_2}(\tilde x,y)$ with the same muduli of convexity, i.e.,
\[(y_t-y_{0,\mu_2}^*)^\top  \nabla_t g_{0,\mu_2}(\tilde x,y_t) 	\geq  g_{0,\mu_2}(\tilde x,y_t) - g_{0,\mu_2}(\tilde x, y_{0,\mu_2}^*) + \frac{\lambda_g}{2}\|y_t-y_{0,\mu_2}^*\|^2,
\]
and therefore
\begin{align}\notag 
\EE_{|t}\|y_{t+1}-y_{0,\mu_2}^*\|^2 &\leq (1-\beta \lambda_g)\|y_t-y_{0,\mu_2}^*\|^2 - 2\beta ( g_{0,\mu_2}(\tilde x,y_t) - g_{0,\mu_2}(\tilde x,y_{0,\mu_2}^*) ) \\& \notag ~~~~~~~~~~~~~~~~~~~~~~~~~~~~~~~~\! +  \beta^2 \EE_{|t} \|\tilde\nabla_y G_{0,\mu_2}(\tilde x,y_t,\zeta_t)\|^2\\ \notag &\leq (1-\beta \lambda_g)\|y_t-y_{0,\mu_2}^*\|^2 - 2\beta ( g_{0,\mu_2}(\tilde x,y_t) - g_{0,\mu_2}(\tilde x,y_{0,\mu_2}^*) ) \\ &~~~~~~~~~~~~~~~~~~~~~~~~~~~~~~~~\! +  \beta^2\alpha_1 +  \beta^2\alpha_2\EE_{|t} \|\nabla_y G_{0,\mu_2}(\tilde x,y_t,\zeta_t)\|^2.\label{contractionChain}
\end{align}	
The argument of the expectation in the last term can be bounded as
\begin{align}\notag 
\|\nabla_y  G_{0,\mu_2}(\tilde x,y_t,\zeta_t)\|^2  &\leq 2\left\|\nabla_y G_{0,\mu_2}(\tilde x,y_{0,\mu_2}^*,\zeta_t)\right\|^2+2 \left\|\nabla_y G_{0,\mu_2}(\tilde x,y_t,\zeta_t) - \nabla_y G_{0,\mu_2}(\tilde x,y_{0,\mu_2}^*,\zeta_t)\right\|^2 \\ &\leq 2\left\|\nabla_y G_{0,\mu_2}(\tilde x,y_{0,\mu_2}^*,\zeta_t)\right\|^2 +  4L_{1,G}\left( G_{0,\mu_2}(\tilde x,y_t,\zeta_t)  -  G_{0,\mu_2}(\tilde x,y_{0,\mu_2}^*,\zeta_t)\right) \notag  \\& ~~~~~~~~~~~~~~~~~~~~~~~~~~~~~~~~~~ \! - 4L_{1,G}\left\langle \nabla_y G_{0,\mu_2}(\tilde x, y_{0,\mu_2}^*,\zeta_t),y_t-y_{0,\mu_2}^*\right\rangle,  \label{SGDSecMomExpansion}
\end{align}
where for the second inequality we used the facts that $G$ and $G_{0,\mu_2}$ share identical Lipschitz constants (Proposition \ref{general_prop}), and the well-known implication of smoothness (e.g., see Theorem 2.1.15 of \cite{nesterov2018lectures}) that for a function $q\in \mathcal{C}^1(\R^m;L_{1,q})$,
\[
q(z) + \langle\nabla q(z), z'-z \rangle + \frac{1}{2L_{1,q}}\|\nabla q(z') - \nabla q(z)\|^2 \leq q(z'). 
\]
Applying an expectation to both sides of \eqref{SGDSecMomExpansion} gives 
\begin{align}\notag 
\EE_{|t}\|\nabla_y  G_{0,\mu_2}(\tilde x,y_t,\zeta_k)\|^2 &\leq 2\EE \left\|\nabla_y G_{0,\mu_2}(\tilde x,y_{0,\mu_2}^*,\zeta_t)\right\|^2 +  4L_{1,G}\left( g_{0,\mu_2}(\tilde x,y_t)  -  g_{0,\mu_2}(\tilde x,y_{0,\mu_2}^*)\right) \\& \leq 2\alpha_3 +  4L_{1,G}\left( g_{0,\mu_2}(\tilde x,y_t)  -  g_{0,\mu_2}(\tilde x,y_{0,\mu_2}^*)\right),\label{expectApply2ineq}
\end{align}
where the inner-product term in \eqref{SGDSecMomExpansion} vanishes after taking an expectation since 
\begin{align*}
    \EE_{|t} \nabla_y G_{0,\mu_2}(\tilde x, y_{0,\mu_2}^*,\zeta) &= \EE_\zeta  \nabla_y G_{0,\mu_2}(\tilde x, y_{0,\mu_2}^*,\zeta)\\ &= \EE_\zeta \EE_v \tilde \nabla_y G_{0,\mu_2}(\tilde x, y^*,\zeta)\\ &=\EE_\zeta \EE_v \left[ v\frac{G(\tilde x, y^*+\mu_2 v,\zeta) - G(\tilde x, y_{0,\mu_2}^*,\zeta)}{\mu_2}\right] \\ &= \EE_v \EE_\zeta \left[ v\frac{G(\tilde x, y^*+\mu_2 v,\zeta) - G(\tilde x, y_{0,\mu_2}^*,\zeta)}{\mu_2}\right]\\ & = \EE_v  \left[ v\frac{g(\tilde x, y_{0,\mu_2}^*+\mu_2 v) - g(\tilde x, y_{0,\mu_2}^*)}{\mu_2}\right]\\ &=\nabla_y g_{0,\mu_2}(\tilde x, y_{0,\mu_2}^*)\\&=0.
\end{align*}
Using \eqref{expectApply2ineq} in \eqref{contractionChain} and taking an expectation of both sides with respect to $y_k$ gives
\begin{align}\notag 
\EE\|y_{t+1}-y_{0,\mu_2}^*\|^2 &\leq (1-\beta \lambda_g)\EE \|y_t-y_{0,\mu_2}^*\|^2 - 2\beta(1-2\beta\alpha_2 L_{1,G}) ( g_{0,\mu_2}(\tilde x,y_t) - g_{0,\mu_2}(\tilde x,y_{0,\mu_2}^*) ) \\ \notag &~~~~~~~~~~~~~~~~~~~~~~~~~~~~~~~~~~\! +  \beta^2(\alpha_1 + 2\alpha_2\alpha_3 )\\&\leq (1-\beta \lambda_g)\EE \|y_k-y_{0,\mu_2}^*\|^2 +  \beta^2(\alpha_1 + 2\alpha_2\alpha_3 ), 
\end{align}	
where the second inequality follows from the assumption that $\beta \le 1/(2\alpha_2 L_{1,G})$. Applying the above inequality recursively gives 
\begin{align*}
    \EE\|y_{t+1}-y_{0,\mu_2}^*\|^2&\leq (1-\beta \lambda_g)^t\|y_0-y_{0,\mu_2}^*\|^2 + \beta^2(\alpha_1 + 2\alpha_2\alpha_3 )\sum_{j=0}^{k-1}(1-\beta\lambda_g)^j\\&\leq (1-\beta \lambda_g)^t\|y_0-y_{0,\mu_2}^*\|^2 + \frac{\beta(\alpha_1 + 2\alpha_2\alpha_3 )}{\lambda_g}.
\end{align*}
Now, assuming $t \ge \frac{1}{\beta \lambda_g} \log \frac{1}{\epsilon}$ (for some $\epsilon>0$), we have
\[
(1-\beta \lambda_g)^t\|y_0-y_{0,\mu_2}^*\|^2 \le e^{-\beta \lambda_g t} \|y_0-y_{0,\mu_2}^*\|^2 \le \epsilon \|y_0-y_{0,\mu_2}^*\|^2,
\]
which together with the assumption that $\beta \le \epsilon \lambda_g$, imply the result.

\subsection{Proof of Lemma~\ref{lemma_sgd}}\label{lemma_sgd:proof}

\begin{proof}
First, note that
\begin{align*}
\EE \left[\|y_{t_k} - y_{\eta_2,\mu_2}^*(x_k) \|^2 \right] &\le 2\EE \left[\|y_{t_k} -y^*(x_k)\|^2 \right]+
2\EE \left[\|y^*(x_k)- y_{\eta_2,\mu_2}^*(x_k)\|^2 \right],\\
\EE \left[\|y_{t_k} - y^*(x_k) \|^2 \right] &\le 2 \EE \left[\|y_{t_k} - y_{0,\mu_2}^*(x_k)  \|^2 \right]
+ 2 \EE \left[\|y^*(x_k) - y_{0,\mu_2}^*(x_k)  \|^2 \right],\\
\EE \left[\|y_0 - y_{0,\mu_2}^*(x_k) \|^2 \right] &\le 2 \EE \left[\|y_0 - y^*(x_k) \|^2 \right]
+ 2 \EE \left[\|y^*(x_k) - y_{0,\mu_2}^*(x_k)  \|^2 \right],
\end{align*}
which in the view of Proposition~\ref{zeroth-order bilevel}.a), imply that
\begin{align*}
\EE \left[\|y^*(x_k)- y_{\eta_2,\mu_2}^*(x_k)\|^2 \right] &\le \frac{2 L_{1,g}}{\lambda_g }\left(\eta_2^2n + \mu_2^2 m\right),\\
\EE \left[\|y_{t_k} - y^*(x_k) \|^2 \right] &\le 2 \EE \left[\|y_{t_k} - y_{0,\mu_2}^*(x_k)  \|^2 \right]
+ \frac{4 L_{1,g} \mu_2^2 m}{\lambda_g },\\
\EE \left[\|y_0 - y_{0,\mu_2}^*(x_k) \|^2 \right] &\le 2 \EE \left[\|y_0 - y^*(x_k) \|^2 \right]
+ \frac{4 L_{1,g} \mu_2^2 m}{\lambda_g }.
\end{align*}
We then obtain \eqnok{y_cnvrg_st} by noting Theorem~\ref{sgd:general} and the choice of $t_k, \beta_t$ in \eqnok{alpha_beta_st}.
\end{proof}

\section{Concluding Remarks}\label{sec:conc}
In this paper, we focus on studying bilevel optimization problems when only noisy evaluations of the inner/outer objectives are available. We first extend the well-known zeroth-order gradient/hessian approximations to the case when two block coordinates are perturbed by different Gaussian random vectors. We recover similar results depending on two smoothing parameters associated with each Gaussian convolution. We then exploit these results in the framework of bilevel optimization and for the first time, establish sample complexity bounds for fully zeroth-order bilevel optimization methods. Examining these results to get the optimal dependence on the problem dimension and target accuracy is left as a future research direction.

\section{Appendix: Supplemental Proofs}\label{proofs}

\subsection{Proof of Proposition \ref{general_prop}}\label{general_prop:proof}
\paragraph{(a).} If $q$ is strongly convex, then for every $t\in[0,1]$:
\begin{align*}
    q(tx+(1-t)x',ty+(1-t)y') \le t q(x,y)+(1-t)q(x',y') - \frac{1}{2} \lambda_q t(1-t) \left(\|x-x'\|^2 + \|y-y'\|^2\right).
\end{align*}
Replacing $x$ and $x'$ with $x+\eta u$, and $x'+\eta u$, also, $y$ and $y'$ with $y+\mu v$, and $y'+\mu v$ in the inequality above, and applying an expectation to both sides gives:  
\begin{align*}
    q_{\eta,\mu}(tx\!+\!(1-t)x',ty\!+\!(1-t)y') &\le t q_{\eta,\mu}(x,y)\!+\!(1-t)q_{\eta,\mu}(x',y') \!- \!\frac{1}{2} \lambda_q t(1-t)\! \left(\|x-x'\|^2\! +\! \|y-y'\|^2\right),
\end{align*}
which establishes the strong convexity of $q_{\eta,\mu}$. Clearly, setting $\lambda_q=0$ in the equations above immediately establishes the result for convex functions. 

\paragraph{(b).} For $q\in \mathcal{C}^0(\mathbb{R}^{n+m};L_{0,q})$, by the Jensen's inequality we have
\begin{align*}
    | q_{\eta,\mu}(x,y) - q_{\eta,\mu}(x',y')| &= | \mathbb{E}_{u,v}\left[ q(x+\eta u, y+\mu v) - q(x'+\eta u, y'+\mu v)\right]|\\ & \leq  \mathbb{E}_{u,v}\left[ \left|q(x+\eta u, y+\mu v) - q(x'+\eta u, y'+\mu v)\right|\right]\\ &\leq \mathbb{E}_{u,v}\left[ L_{0,q} \left\|(x+\eta u, y+\mu v) - (x'+\eta u, y'+\mu v) \right\|  \right]\\& = L_{0,q} \left\|(x,y) - (x',y') \right\|. 
\end{align*}
For $q\in \mathcal{C}^1(\mathbb{R}^{n+m};L_{1,q})$ since the order of expectation and gradient are interchangable as 
\[
\nabla q_{\eta,\mu}(x,y) = \nabla \mathbb{E}_{u,v} [q(x+\eta u, y+\mu v)] =  \mathbb{E}_{u,v} [\nabla q(x+\eta u, y+\mu v)],
\]
a similar argument goes through:
\begin{align*}
    \| \nabla q_{\eta,\mu}(x,y) - \nabla q_{\eta,\mu}(x',y')\| &= \left \| \mathbb{E}_{u,v}\left[ \nabla q(x+\eta u, y+\mu v) - \nabla q(x'+\eta u, y'+\mu v)\right]\right \|\\ & \leq  \mathbb{E}_{u,v} \left\|\nabla q(x+\eta u, y+\mu v) - \nabla q(x'+\eta u, y'+\mu v)\right\|\\ &\leq L_{1,q} \mathbb{E}_{u,v}  \left\|(x+\eta u, y+\mu v) - (x'+\eta u, y'+\mu v) \right\|  \\& = L_{1,q} \left\|(x,y) - (x',y') \right\|. 
\end{align*}
For $q\in \mathcal{C}^2(\mathbb{R}^{n+m};L_{2,q})$, the proof is very similar to the case $q\in \mathcal{C}^1(\mathbb{R}^{n+m};L_{1,q})$ and skipped here.

\subsection{Proof of Proposition \ref{propNestApprox}}\label{propNestApprox:proof}
\paragraph{(a).} For $u\sim\mathcal{N}(0,I_n)$ and $v\sim\mathcal{N}(0,I_m)$:
\begin{align*}
    | q_{\eta,\mu}(x,y) - q(x,y)| &= | \mathbb{E}_{u,v}\left[ q(x+\eta u, y+\mu v) - q(x,y)\right]|\\ & \leq  \mathbb{E}_{u,v}\left[ \left|q(x+\eta u, y+\mu v) - q(x,y)\right|\right]\\ &\leq \mathbb{E}_{u,v}\left[ L_{0,q}\sqrt{\eta^2\|u\|^2 + \mu^2\|v\|^2}\right] \\
    &\leq L_{0,q}\sqrt{\mathbb{E}_{u,v}\left[ \eta^2\|u\|^2 + \mu^2\|v\|^2\right] }\\ &= L_{0,q}\sqrt{\eta^2n + \mu^2 m},
\end{align*}
where in the second line we used the Jensen's inequality, the third line follows from the smoothness assumption, the forth line again uses the Jensen's inequality (noting that the square-root function is concave), and finally in the last equation we used the fact that for $u\sim\mathcal{N}(0,I_n)$, $\mathbb{E}\|u\|^2 = n$. 

\paragraph{(b).} Notice that $q\in \mathcal{C}^1(S;L_{1,q})$ implies that for $z,z'\in\mathbb{R}^{n+m}$,
\begin{equation*}
    \left |q(z) - q(z') - (z-z')^\top \nabla q(z')  \right| \leq \frac{L_{1,q}}{2}\|z-z'\|^2.
\end{equation*}
In view of this, and using a similar line of argument as part (a), we have
\begin{align*}
    | q_{\eta,\mu}(x,y) - q(x,y)| &= \left | \mathbb{E}_{u,v}\left[ q(x+\eta u, y+\mu v) - q(x,y) - (\eta u,\mu v)^\top \nabla q(x,y) \right]\right |\\ & \leq  \mathbb{E}_{u,v}\left[ \left|q(x+\eta u, y+\mu v) - q(x,y) - (\eta u,\mu v)^\top \nabla q(x,y)\right|\right]\\ &\leq \mathbb{E}_{u,v}\left[ \frac{L_{1,q}}{2}\left(\eta^2\|u\|^2 + \mu^2\|v\|^2\right)\right] \\
    &= \frac{L_{1,q}}{2}\left(\eta^2n + \mu^2 m\right).
\end{align*}
For the next part, notice that
\begin{align}\notag 
\nabla_x q_{\eta,\mu}(x,y) - \nabla_x q(x,y)  &= \mathbb{E}_{u,v} \left[  \frac{q(x+\eta u, y+\mu v) - \eta u^\top  \nabla_x q(x,y)}{\eta} \ u \right]\\ \notag &= \mathbb{E}_{u,v} \left[  \frac{q(x+\eta u, y+\mu v) - q(x, y) - \eta u^\top  \nabla_x q(x,y) }{\eta}  u\right] \\ \notag &= \mathbb{E}_{u,v} \left[  \frac{q(x+\eta u, y+\mu v) - q(x, y) - \eta u^\top  \nabla_x q(x,y) -\mu v^\top  \nabla_y q(x,y) }{\eta}  u\right]\\  &= \mathbb{E}_{u,v} \left[  \frac{q(x+\eta u, y+\mu v) - q(x, y) - (\eta u,\mu v)^\top \nabla q(x,y)  }{\eta}  u\right].
\end{align}
Using the Jensen's inequality, this immediately implies that
\begin{align*}
\|\nabla_x q_{\eta,\mu}(x,y) - \nabla_x q(x,y)\|  &\leq  \mathbb{E}_{u,v}\left[ \left\|    \frac{q(x+\eta u, y+\mu v) - q(x, y) - (\eta u,\mu v)^\top \nabla q(x,y)  }{\eta}  u \right\|\right]\\ &= \mathbb{E}_{u,v}\left[ \left|    \frac{q(x+\eta u, y+\mu v) - q(x, y) - (\eta u,\mu v)^\top \nabla q(x,y)  }{\eta}   \right|\|u\|\right]\\&\leq \mathbb{E}_{u,v}\left[ \frac{L_{1,q}}{2\eta}\left(\eta^2\|u\|^2 + \mu^2\|v\|^2\right)\|u\|\right]\\&\leq \frac{L_{1,q}}{2\eta}\left(\eta^2(n+3)^{\frac{3}{2}} + \mu^2 m n^{\frac{1}{2}}\right)
\end{align*}
where we used the facts that for $u\sim\mathcal{N}(0,I_n)$, $\mathbb{E}\|u\| \leq n^{\frac{1}{2}}$, $\mathbb{E}\|u\|^2 = n$ , and $\mathbb{E}\|u\|^3 \leq (n+3)^{\frac{3}{2}}$ (see Lemma \ref{lemmaExpectationNormPower}). In a similar fashion, we can show that 
\begin{align*}
\|\nabla_y q_{\eta,\mu}(x,y) - \nabla_y q(x,y)\| \leq  \frac{L_{1,q}}{2\mu}\left(\mu^2(m+3)^{\frac{3}{2}} + \eta^2 n m^{\frac{1}{2}}\right),
\end{align*}
and as a result
\begin{align}
\|\nabla q_{\eta,\mu}(x,y) - \nabla q(x,y)\| \leq \frac{L_{1,q}}{2}\left(\eta(n+3)^{\frac{3}{2}} + \frac{\mu^2}{\eta} m n^{\frac{1}{2}} + \frac{\eta^2}{\mu} n m^{\frac{1}{2}}+ \mu(m+3)^{\frac{3}{2}} \right).\label{eqU1x}
\end{align}
\begin{remark}\label{remarkLeibniz}
    Since a Gaussian random variable is unbounded, and the calculation of the expectation undergoes an improper integration, standard Leibniz-type conditions cannot warrant the exchange of expectation and differentiation orders. However, under stricter condition on $q$ (e.g., see \cite{protter1985differentiation}), that such exchange of orders becomes possible, the bound in \eqref{eqU1x} can be significantly improved as 
\begin{align}\notag 
\nabla q_{\eta,\mu}(x,y) - \nabla q(x,y)  &= \nabla \mathbb{E}_{u,v} \left[  q(x+\eta u, y+\mu v) -  q(x,y) \right]\\& = \mathbb{E}_{u,v} \left[  \nabla q(x+\eta u, y+\mu v) -  \nabla q(x,y) \right].\label{nablaxDiff}
\end{align}
Then, using the Jensen's inequality, this immediately implies that
\begin{align*}
\|\nabla q_{\eta,\mu}(x,y) - \nabla q(x,y)\|  &\leq  \mathbb{E}_{u,v}\left[ \left\|  \nabla q(x+\eta u, y+\mu v) -  \nabla q(x,y)   \right\|\right]\\&\leq \mathbb{E}_{u,v}\left[ L_{1,q}\sqrt{\eta^2\|u\|^2 + \mu^2\|v\|^2}\right] \\
    &\leq L_{1,q}\sqrt{\mathbb{E}_{u,v}\left[ \eta^2\|u\|^2 + \mu^2\|v\|^2\right] }\\ &= L_{1,q}\sqrt{\eta^2n + \mu^2 m}.
\end{align*}
\end{remark}

\paragraph{(c).} As stated before,  $q\in \mathcal{C}^2(S;L_{2,q})$ implies that for $z,z'\in\mathbb{R}^{n+m}$, 
\begin{equation}\label{secondOrder}
   \left|q(z') - q(z) -\langle \nabla q(z), z'-z\rangle- \frac{1}{2}\langle \nabla^2 q(z)(z'-z), z'-z\rangle\right |\leq \frac{L_{2,q}}{6}\|z-z'\|^3.
\end{equation}
On the other hand,
\begin{align}\notag 
\nabla_x q_{\eta,\mu}(x,y) - \nabla_x q(x,y)  &= \mathbb{E}_{u,v} \left[  \frac{q(x+\eta u, y+\mu v) - \eta u^\top  \nabla_x q(x,y)}{\eta} \ u \right]\\ \notag &= \mathbb{E}_{u,v} \left[  \frac{q(x+\eta u, y+\mu v) - q(x, y) - \eta u^\top  \nabla_x q(x,y) }{\eta}  u\right] \\ \notag &= \mathbb{E}_{u,v} \left[  \frac{q(x+\eta u, y+\mu v) - q(x, y) - \eta u^\top  \nabla_x q(x,y) -\mu v^\top  \nabla_y q(x,y) }{\eta}  u\right]\\  &= \mathbb{E}_{u,v} \left[  \frac{q(x+\eta u, y+\mu v) - q(x, y) - (\eta u,\mu v)^\top \nabla q(x,y)  }{\eta}  u\right].\label{nablaxDiff}
\end{align}
Given $x$ and $y$, we define the function
\begin{equation}
    e_{x,y}(\delta ,\delta') = q(x+\delta , y+\delta') - q(x, y) - (\delta ,\delta')^\top \nabla q(x,y) - \frac{1}{2} (\delta,\delta')^\top \nabla^2 q(x,y)(\delta,\delta').\label{eqexy}
\end{equation}
Averaging \eqref{nablaxDiff} with a version of itself evaluated for $-u$ and $-v$ gives
\begin{align*}\notag 
\nabla_x q_{\eta,\mu}(x,y) - \nabla_x q(x,y) & = \mathbb{E}_{u,v} \left[  \frac{q(x+\eta u, y+\mu v) - q(x-\eta u, y-\mu v) - 2(\eta u,\mu v)^\top \nabla q(x,y)  }{2\eta}  u\right]\\ & = \mathbb{E}_{u,v} \left[  \frac{e_{x,y}(\eta u,\mu v) - e_{x,y}(-\eta u,-\mu v) }{2\eta}  u\right].
\end{align*}
Notice that
\begin{align}\notag 
|e_{x,y}(\eta u,\mu v) - e_{x,y}(-\eta u,-\mu v)|&\leq |e_{x,y}(\eta u,\mu v) | + |e_{x,y}(-\eta u,-\mu v)|\\ \notag  &\leq \frac{L_{2,q}}{3}\|(\eta u,\mu v)\|^3\\ \notag  &\leq \frac{L_{2,q}}{3} \left(\|\eta u\| + \|\mu v\|\right)^3\\&\leq  \frac{4 L_{2,q}}{3} \left(\|\eta u\|^3 + \|\mu v\|^3\right),\label{exy:bound}
\end{align}
where the second inequality is a result of \eqref{secondOrder}, the third inequality is an application of the triangle inequality for vectors $(\eta u,0)$ and $(0,\mu y)$, and the last inequality is an implication of $(a+b)^k\leq 2^{k-1}(a^k + b^k)$ which holds for $a,b\geq 0$ and $k\geq 1$. We can now bound the difference norm as
\begin{align*}\notag 
\|\nabla_x q_{\eta,\mu}(x,y) - \nabla_x q(x,y)\| & = \left\| \mathbb{E}_{u,v} \left[  \frac{e_{x,y}(\eta u,\mu v) - e_{x,y}(-\eta u,-\mu v) }{2\eta}  u\right] \right\|\\ &\leq  \mathbb{E}_{u,v} \left[ \left\| \frac{e_{x,y}(\eta u,\mu v) - e_{x,y}(-\eta u,-\mu v) }{2\eta}  u \right\|\right]\\ & = \mathbb{E}_{u,v} \left[ \left | \frac{e_{x,y}(\eta u,\mu v) - e_{x,y}(-\eta u,-\mu v) }{2\eta}\right|\left\|  u \right\|\right]\\& \leq \frac{2L_{2,q}}{3\eta }\mathbb{E}_{u,v} \left[ \left ( \eta^3\|u\|^3+\mu^3\|v\|^3\right)\left\|  u \right\|\right]\\&\leq \frac{2L_{2,q}}{3}\left(\eta^2(n+4)^2 + \frac{\mu^3}{\eta} (m+3)^{\frac{3}{2}}n^{\frac{1}{2}} \right),
\end{align*}
where for the last inequality we employed Lemma \ref{lemmaExpectationNormPower}, specifically, $\mathbb{E}\|u\| \leq n^{\frac{1}{2}}$, and for $p\geq 2$, $\mathbb{E}\|u\|^p \leq (n+p)^{\frac{p}{2}}$.
In a similar fashion, we can show that 
\begin{align*}
\|\nabla_y q_{\eta,\mu}(x,y) - \nabla_y q(x,y)\| \leq \frac{2L_{2,q}}{3}\left(\mu^2(m+4)^2 + \frac{\eta^3}{\mu} (n+3)^{\frac{3}{2}}m^{\frac{1}{2}} \right),
\end{align*}
and as a result
\begin{align*}
\|\nabla q_{\eta,\mu}(x,y) - \nabla q(x,y)\| \leq \frac{2L_{2,q}}{3}\left(\eta^2(n+4)^2 + \frac{\mu^3}{\eta} (m+3)^{\frac{3}{2}}n^{\frac{1}{2}}  + \frac{\eta^3}{\mu} (n+3)^{\frac{3}{2}}m^{\frac{1}{2}} +  \mu^2(m+4)^2  \right).
\end{align*}
For the next part, and to bound the difference between $\nabla^2 q_{\eta,\mu}(x,y)$ and $\nabla^2 q(x,y)$, defining $\delta(x,y) = q_{\eta,\mu}(x,y) - q(x,y)$, we have
\begin{align}\notag 
    \|\nabla^2\delta\| = \left\|\begin{bmatrix}\nabla_{xx}^2\delta & \nabla_{xy}^2\delta\\ \nabla_{yx}^2\delta & \nabla_{yy}^2\delta\end{bmatrix} \right\| &= \left\|\begin{bmatrix}\nabla_{xx}^2\delta & 0\\ 0& 0\end{bmatrix} + \begin{bmatrix}0 & 0\\ 0& \nabla_{yy}^2\delta\end{bmatrix} + \begin{bmatrix}0 & \nabla_{xy}^2\delta\\ \nabla_{xy}^2\delta^\top& 0\end{bmatrix}\right\| \\ &\leq \|\nabla_{xx}^2\delta\| + \|\nabla_{yy}^2\delta\| + \|\nabla_{xy}^2\delta\|,\label{BlockOperatorNorm}
\end{align}
where we used the triangle inequality, plus the facts that zero padding does not change the operator norm, and an anti-block diagonal matrix with blocks $\nabla_{xy}^2\delta$ and $\nabla_{xy}^2\delta^\top$ has the same operator norm as $\nabla_{xy}^2\delta$. This allows us to bound $ \nabla^2 q_{\eta,\mu}(x,y) - \nabla^2 q(x,y)$ by bounding its blocks. We start by the $\nabla^2_{xx}$ block as follows:
\begin{align*}
    \nabla^2_{xx}q_{\eta,\mu}(x,y) =& ~\EE_{u,v}\left[ \left(uu^\top - I_n\right)\frac{q(x+\eta u,y+\mu v)}{\eta^2}\right]\\ =&~\EE_{u,v}\left[ \left(uu^\top - I_n\right)\frac{q(x+\eta u,y+{\mu v}) - q(x,y)  }{\eta^2}\right]\\
    =&~\EE_{u,v}\left[ \left(uu^\top - I_n\right)\frac{e_{x,y}(\eta u,\mu v) }{\eta^2}\right] + \EE_{u,v}\left[ \left(uu^\top - I_n\right)\frac{  u^\top \nabla_x q(x,y)  }{\eta}\right]\\&+ 
    \frac{1}{2}\EE_{u,v}\left[ \left(uu^\top - I_n\right)   u^\top \nabla^2_{xx} q(x,y)u \right],
\end{align*}
where the second equality is thanks to $\EE_u[(uu^\top-I_n)]=0$, and $e_{x,y}$ follows the notation in \eqref{eqexy}. We now simplify each term in the third equality. For the second term we have
\begin{align*}
   \frac{1}{\eta} \EE_{u,v}\left[ \left(uu^\top - I_n\right)  u^\top \nabla_x q(x,y)  \right] &=\frac{1}{\eta} \EE_{u}\left[ u \nabla_x q(x,y)^\top uu^\top     \right]\\&=0.
\end{align*}
Here we used the result (see \S 8.2.3 of \cite{petersen2008matrix}) that for a normal random variable $a\sim\mathcal{N}(\alpha,I)$ and fixed vector $b$: 
\[
\EE\left[ ab^\top aa^\top  \right] = \alpha b^\top\left(I+\alpha\alpha^\top \right)+\left(I+\alpha\alpha^\top \right)b\alpha^\top + b^\top\alpha \left(I-\alpha\alpha^\top \right).
\]
For the third term we have
\begin{align*}
    \frac{1}{2}\EE_{u,v}\left[ \left(uu^\top - I_n\right)   u^\top \nabla^2_{xx} q(x,y)u \right] =& ~ \frac{1}{2}\EE_{u}\left[ uu^\top \nabla^2_{xx} q(x,y)  u u^\top  \right] - \frac{1}{2}\tr\left(\nabla^2_{xx} q(x,y) \right)I_n\\ =& ~\nabla^2_{xx} q(x,y).
\end{align*}
Here we used the result (see \S 8.2.4 of \cite{petersen2008matrix}) that for a normal random variable $a\sim\mathcal{N}(\alpha,I)$ and fixed matrix $B$: 
\begin{align*}
    \EE\left[ aa^\top Baa^\top  \right] = \left(I+\alpha\alpha^\top \right)\left(B+B^\top\right)\left(I+\alpha\alpha^\top \right)+\alpha^\top B \alpha \left(I-\alpha\alpha^\top \right) + \tr(B)\left(I+\alpha\alpha^\top \right).
\end{align*}
This implies that
\begin{align} \notag 
   \left\| \nabla^2_{xx}q_{\eta,\mu}(x,y) - \nabla^2_{xx}q(x,y)\right\| &=\left\|\EE_{u,v}\left[ \left(uu^\top - I_n\right)\frac{e_{x,y}(\eta u,\mu v) }{\eta^2}\right] \right\|\\ \notag  &\leq \EE_{u,v}\left[ \left\| \left(uu^\top - I_n\right)\frac{e_{x,y}(\eta u,\mu v) }{\eta^2} \right\|\right]\\ \notag  & =  \EE_{u,v}\left[ \left | \frac{e_{x,y}(\eta u,\mu v)}{\eta^2}\right| \left\|uu^\top - I_n\right\|  \right]\\& \notag \leq \frac{2 L_{2,q}}{3\eta^2} \EE_{u,v}\left[\left(\|\eta u\|^3 + \|\mu v\|^3\right)\left\|uu^\top - I_n\right\|\right]\\ \notag & \leq \frac{2 L_{2,q}}{3\eta^2} \EE_{u,v}\left[\left(\|\eta u\|^3 + \|\mu v\|^3\right)\left(\|u\|^2 +1\right)\right]
   \\&\leq \frac{2 L_{2,q}}{3}\left(\eta (n+5)^{5/2} + \eta(n+3)^{3/2} + \frac{\mu^3}{\eta^2}(n+1)(m+3)^{3/2}  \right), \label{eqU2xx}
\end{align}
where for the second inequality a similar line of argument as \eqref{exy:bound} is used. In a similar fashion, we can show that
\begin{align}
   \left\| \nabla^2_{yy}q_{\eta,\mu}(x,y) - \nabla^2_{yy}q(x,y)\right\| \leq \frac{2 L_{2,q}}{3}\left(\mu (m+5)^{5/2} + \mu(m+3)^{3/2} + \frac{\eta^3}{\mu^2}(m+1)(n+3)^{3/2}  \right). \label{eqU2yy}
\end{align}
Finally, for the $\nabla_{xy}$ component we get
\begin{align*}
    \nabla^2_{xy}q_{\eta,\mu}(x,y) =& ~\EE_{u,v}\left[ uv^\top \frac{q(x+\eta u,y+\mu v)}{\eta\mu}\right]\\ =&~\EE_{u,v}\left[ uv^\top \frac{q(x+\eta u,y+{\mu v}) - q(x,y)  }{\eta\mu}\right]\\
    =&~\EE_{u,v}\left[ uv^\top\frac{e_{x,y}(\eta u,\mu v) }{\eta\mu}\right] + \EE_{u,v}\left[ uv^\top \frac{  (\eta u,\mu v)^\top \nabla q(x,y)  }{\eta\mu}\right]\\&+ 
    \frac{1}{2}\EE_{u,v}\left[ uv^\top  \frac{(\eta u,\mu v)^\top \nabla^2 q(x,y) (\eta u,\mu v)}{\eta\mu } \right],
\end{align*}
Each term in the third equality simplifies as follows. For the second term we have
\begin{align*}
   \frac{1}{\eta\mu } \EE_{u,v}\left[ uv^\top  (\eta u,\mu v)^\top \nabla q(x,y) \right] &= \EE_{u,v}\left[ uv^\top  \left(\eta u^\top \nabla_x q(x,y) +  \mu v^\top \nabla_y q(x,y)\right) \right]\\&=0.
\end{align*}
For the third term we have
\begin{align*}
    \frac{1}{2 \eta\mu }&\EE_{u,v}\left[ uv^\top  (\eta u,\mu v)^\top \nabla^2 q(x,y) (\eta u,\mu v) \right] \\&=  \frac{1}{2 \eta\mu }\EE_{u,v}\left[ uv^\top  \left( \eta^2 u^\top\nabla^2_{xx} q(x,y) u + \mu^2v^\top \nabla^2_{yy}q(x,y) v+ 2\eta \mu u^\top \nabla^2_{xy}q(x,y)v\right) \right]\\& = \EE_u\left[uu^\top\right]\nabla_{x,y}q(x,y)\EE_v\left[vv^\top\right]\\& = \nabla_{x,y}q(x,y).
\end{align*}
This result implies that
\begin{align}\notag 
   \left\| \nabla^2_{xy}q_{\eta,\mu}(x,y) - \nabla^2_{xy}q(x,y)\right\| &=\left\|\EE_{u,v}\left[ uv^\top\frac{e_{x,y}(\eta u,\mu v) }{\eta\mu}\right] \right\|\\\notag  &\leq \EE_{u,v}\left[ \left\| uv^\top\frac{e_{x,y}(\eta u,\mu v) }{\eta\mu} \right\|\right]\\ \notag  & =  \EE_{u,v}\left[ \left | \frac{e_{x,y}(\eta u,\mu v)}{\eta\mu }\right| \left\|uv^\top \right\|  \right]\\ \notag  &\leq \frac{2 L_{2,q}}{3\eta\mu} \EE_{u,v}\left[\left(\|\eta u\|^3 + \|\mu v\|^3\right)\left\|uv^\top \right\|\right]\\ \notag  & \leq \frac{2 L_{2,q}}{3\eta\mu} \EE_{u,v}\left[\left(\|\eta u\|^3 + \|\mu v\|^3\right)\|u\|\|v\| \right]
   \\&\leq \frac{2 L_{2,q}}{3}\left(\frac{\eta^2}{\mu}(n+4)^2 m^{1/2} + \frac{\mu^2}{\eta}(m+4)^2 n^{1/2} \right).\label{eqU2xy}
\end{align}
Using \eqref{eqU2xy}, \eqref{eqU2yy} and \eqref{eqU2xx} in \eqref{BlockOperatorNorm}, delivers the advertised result:
\begin{equation}
  \|  \nabla^2 q_{\eta,\mu}(x,y) - \nabla^2 q(x,y)\| \leq U^{(2)}_{xx} + U^{(2)}_{xy} + U^{(2)}_{yy}.\label{eqHessApproxBound}  
\end{equation}
\begin{remark}
    Similar to Remark \ref{remarkLeibniz}, if the conditions to exchange the order of expectation and Hessian hold for $q$, then a similar line of argument as Remark \ref{remarkLeibniz} can improve the bound \eqref{eqHessApproxBound} to
\begin{align*}
\|\nabla^2 q_{\eta,\mu}(x,y) - \nabla^2 q(x,y)\|  \leq  L_{2,q}\sqrt{\eta^2n + \mu^2 m}.
\end{align*}
\end{remark}

\subsection{Proof of Proposition \ref{propNestApprox_stch}}\label{propNestApprox_stch:proof}
\begin{proof}
First, note that by the triangle inequality and by Proposition \ref{propNestApprox}(b) we have
\begin{align*}
\|\nabla_x Q_{\eta,\mu}(x,y,\zeta) - \nabla_x q_{\eta,\mu}(x,y)\|
 &\le  \|\nabla_x Q_{\eta,\mu}(x,y,\zeta) - \nabla_x Q(x,y,\zeta)\| +\|\nabla_x Q(x,y,\zeta) - \nabla_x q(x,y)\|\\
 &+\|\nabla_x q(x,y)-\nabla_x q_{\eta,\mu}(x,y) \|\\
 &\le U^{(1)}_x \left(1+\frac{L_{1,Q}}{L_{1,q}}\right) +\|\nabla_x Q(x,y,\zeta) - \nabla_x q(x,y)\|.
\end{align*}
Taking expectation of the squared of both sides and under Assumption 1, we obtain the first inequality in part a). Similarly, we can obtain  the second inequality and thus, we skip the details.

Now, assuming $q\in \mathcal{C}^2(\mathbb{R}^{n+m};L_{2,q})$, and following a similar approach, we have
\begin{align*}
\|\nabla_{xy} Q_{\eta,\mu}(x,y,\zeta) - \nabla_{xy} q_{\eta,\mu}(x,y)\|
 &\le  \|\nabla_{xy} Q_{\eta,\mu}(x,y,\zeta) - \nabla_{xy} Q(x,y,\zeta)\| +\|\nabla_{xy} Q(x,y,\zeta) - \nabla_{xy} q(x,y)\|\\
 &+\|\nabla_{xy} q(x,y)-\nabla_{xy} q_{\eta,\mu}(x,y) \|\\
 &\le U^{(2)}_{xy} \left(1+\frac{L_{2,Q}}{L_{2,q}}\right) +\|\nabla_{xy} Q(x,y,\zeta) - \nabla_{xy} q(x,y)\|,
\end{align*}
which implies part b).

\end{proof}

\subsection{Proof of Proposition \ref{propMomentBounds}}\label{propMomentBounds:proof}
Notice that
\begin{align*}
    \mathbb{E}\|\tilde \nabla_x q_{\eta,\mu}(x,y) \|^2 &= \frac{1}{\eta^2}\mathbb{E}\left[\left( q(x+\eta u, y+\mu v) - q(x, y)\right)^2\|u\|^2 \right].
\end{align*}
The function-dependent term can be bounded as
\begin{align*}
    ( q(x\!+\!\eta u,& y\!+\!\mu v) - q(x, y))^2 \\&= \left( q(x\!+\!\eta u, y\!+\!\mu v) - q(x, y) - (\eta u,\mu v)^\top \nabla  q(x, y) + (\eta u,\mu v)^\top \nabla  q(x, y) \right)^2\\ &\leq 2\left( q(x\!+\!\eta u, y\!+\!\mu v) - q(x, y) - (\eta u,\mu v)^\top \nabla  q(x, y)\right)^2 + 2\left((\eta u,\mu v)^\top \nabla  q(x, y)\right)^2\\&\leq 
    \frac{L_{1,q}^2}{2}\left(\eta^2\|u\|^2+\mu^2\|v\|^2 \right)^2+ 2\left( \eta u^\top \nabla_x  q(x, y)+ \mu v^\top \nabla_y  q(x, y) \right)^2\\ 
    &\leq L_{1,q}^2\left(\eta^4\|u\|^4+\mu^4\|v\|^4 \right)+ 4\left(\eta^2 \left(u^\top \nabla_x  q(x, y)\right)^2 + \mu^2\left( v^\top \nabla_y  q(x, y) \right)^2\right),
\end{align*}
   which implies
\begin{align}\label{eq:expansionGradq}
      \|\tilde \nabla_x q_{\eta,\mu}(x,y) \|^2&\leq L_{1,q}^2\left[\eta^2\|u\|^6+\frac{\mu^4}{\eta^2}\|v\|^4\|u\|^2 \right]+ 4\left( \left(u^\top \nabla_x  q(x, y)\right)^2 + \frac{\mu^2}{\eta^2}\left( v^\top \nabla_y  q(x, y) \right)^2\right)u^\top u.
\end{align}
Using Lemma \ref{lemmaExpectationNormPower}, the expectation of the first term on the right side of \eqref{eq:expansionGradq} can be bounded as
\begin{align}
       L_{1,q}^2\EE\left[\eta^2\|u\|^6+\frac{\mu^4}{\eta^2}\|v\|^4\|u\|^2 \right] \leq L_{1,q}^2\left(\eta^2(n+6)^3+\frac{\mu^4}{\eta^2}n(m+4)^2 \right). \label{eq:expansionGradq2}
\end{align}
The second term expectation in \eqref{eq:expansionGradq} can be written as
\begin{align}\notag 
   \EE\bigg[ \Big(u^\top \nabla_x  q(x, y)\Big)^2 u^\top u &+ \frac{\mu^2}{\eta^2}\Big( v^\top \nabla_y  q(x, y) \Big)^2 u^\top u \bigg] \\&= \EE\left[u^\top \nabla_x  q(x, y) \nabla_x  q(x, y)^\top u u^\top u\right] + \frac{\mu^2}{\eta^2}\EE\left[v^\top \nabla_y  q(x, y) \nabla_y  q(x, y)^\top v u^\top u\right]\notag \\& = (n+2)\|\nabla_x  q(x, y)\|^2 + \frac{n\mu^2}{\eta^2}\|\nabla_y  q(x, y)\|^2,\label{eq:expansionGradq3}
\end{align}
where in the second equality we used the 4-th moment result  that for symmetric matrices $A$ and $B$ (see \S 8.2.4 of \cite{petersen2008matrix}):
\begin{equation}\label{cookbook:result:4thorder}
\EE u^\top A u u^\top B u = 2\tr(AB) + \tr(A)\tr(B).
\end{equation}
Using \eqref{eq:expansionGradq3} and \eqref{eq:expansionGradq2} in \eqref{eq:expansionGradq} yields 
\[
\EE\left[  \|\tilde \nabla_x q_{\eta,\mu}(x,y) \|^2 \right]\! \leq L_{1,q}^2\left(\eta^2(n+6)^3+\frac{\mu^4}{\eta^2}n(m+4)^2 \right) +  4(n+2) \|\nabla_x  q(x, y)\|^2 +  \frac{4\mu^2}{\eta^2}n\|\nabla_y  q(x, y)\|^2.
\]
   
\subsection{Proof of Proposition \ref{hessMomentBounds}}\label{hessMomentBounds:proof}

We first show part a). Under the formulation in \eqref{stochHessxx} and the fact that $\theta^\top u u^\top\theta = |\theta^\top u|^2\geq 0$,
\begin{align}\notag 
    \left\|  \tilde \nabla_{xx}^2 q_{\eta,\mu}(x,y) \theta  \right\|^2 &= \frac{\left(q(x+\eta u, y+\mu v)\! +\! q(x-\eta u,y-\mu v)\!-\! 2q(x,y )\right)^2}{4\eta^4}\theta^\top\!\left(\|u\|^2 uu^\top - 2uu^\top + I \right)\! \theta \\ &\leq \frac{\left(q(x+\eta u, y+\mu v)\! + \! q(x-\eta u,y-\mu v)\!-\! 2q(x,y )\right)^2}{4\eta^4}\theta^\top\!\left(\|u\|^2 uu^\top + I \right)\! \theta. \label{hessMomArg}
\end{align}
Using the definition of $e_{x,y}$ in \eqref{eqexy}, we can bound the function-dependent term as
\begin{align}\notag 
    ((q(x+\eta u, y+\mu v)\! & + \! q(x-\eta u,y-\mu v)\!-\! 2q(x,y ))^2 \\\notag  & = \left( e_{x,y}(\eta u,\mu v) + e_{x,y}(-\eta u,-\mu v) + \langle (\eta u,\mu v), \nabla^2 q(x,y) (\eta u,\mu v)\rangle \right)^2\\\notag  &\leq 2\left( e_{x,y}(\eta u,\mu v) + e_{x,y}(-\eta u,-\mu v)\right)^2 + 2\langle (\eta u,\mu v), \nabla^2 q(x,y) (\eta u,\mu v)\rangle ^2 \\ &\leq 8 L_{2,q}^2\left(\|\eta u\|^6 + \|\mu v\|^6\right)+ 2\langle (\eta u,\mu v), \nabla^2 q(x,y) (\eta u,\mu v)\rangle ^2,\label{functDependentsq}
\end{align}
where in the last inequality we used a similar argument as \eqref{exy:bound}, to get:
\begin{align*}
\left(e_{x,y}(\eta u,\mu v) + e_{x,y}(-\eta u,-\mu v)\right)^2&\leq \left(|e_{x,y}(\eta u,\mu v) | + |e_{x,y}(-\eta u,-\mu v)|\right)^2\\ &\leq  \frac{16 L_{2,q}^2}{9} \left(\|\eta u\|^3 + \|\mu v\|^3\right)^2\\ 
&\leq 4 L_{2,q}^2 \left(\|\eta u\|^6 + \|\mu v\|^6\right).
\end{align*}
On the other hand,
\begin{align*}
    \langle (\eta u,\mu v), \nabla^2 q(x,y) (\eta u,\mu v)\rangle ^2 &\!=\! \left( \eta^2 u^\top\nabla^2_{xx} q(x,y) u + 2\eta\mu u^\top\nabla^2_{xy} q(x,y) v + \mu^2 v^\top\nabla^2_{yy} q(x,y) v\right)^2\\ &\leq 3\eta^4\langle u,\! \nabla^2_{xx} q(x,y) u\rangle^2 \!+ \! 6\eta^2 \mu^2 \langle u,\! \nabla^2_{xy}q(x,y) v \rangle^2 \!+\! 3\mu^4 \langle v,\! \nabla^2_{yy}q(x,y) v \rangle^2,
\end{align*}
where we used $(a+b+c)^2\leq 3(a^2+b^2+c^2)$, which is simply an instance of the Cauchy–Schwarz inequality. 
Dropping the dependency of $q$ to $(x,y)$ for brevity, this result together with \eqref{functDependentsq} and \eqref{hessMomArg} implies that 
\begin{align}\notag 
    \left\|  \tilde \nabla_{xx}^2 q_{\eta,\mu} \theta  \right\|^2 \leq & \underbrace{2L_{2,q}^2\left( \eta^2\|u\|^6 + \frac{\mu^6}{\eta^4}\|v\|^6\right)\theta^\top \left(\|u\|^2 uu^\top + I \right) \theta}_\text{$T_1(u,v)$}\\ \notag  & + \underbrace{\frac{3}{2} \langle u,\! \nabla^2_{xx} q~\! u\rangle^2 \theta^\top \left(\|u\|^2 uu^\top + I \right) \theta}_\text{$T_2(u)$} \\ \notag  & + \underbrace{\frac{3\mu^2}{\eta^2}  \langle u,\! \nabla^2_{xy}q~\! v \rangle^2 \theta^\top \left(\|u\|^2 uu^\top + I \right) \theta}_\text{$T_3(u,v)$} \\ & + \underbrace{\frac{3\mu^4}{2\eta^4} \langle v,\! \nabla^2_{yy}q ~\! v \rangle^2 \theta^\top \left(\|u\|^2 uu^\top + I \right) \theta}_\text{$T_4(u,v)$}. \label{T1-T4Expansion}
\end{align}
In the sequel we bound the expectation of $T_1(u,v)$ through $T_4(u,v)$. For the first term we have
\begin{align*}
    \EE\left[T_1(u,v)\right] = 2L_{2,q}^2\EE\left[ \eta^2\|u\|^8(\theta^\top u)^2  + \eta^2 \|u\|^6\|\theta\|^2 + \frac{\mu^6}{\eta^4}\|v\|^6 \|u\|^2 (\theta^\top u)^2 +  \frac{\mu^6}{\eta^4}\|v\|^6 \|\theta\|^2 \right].
\end{align*}
Appealing to \eqref{cookbook:result:4thorder} we have $\EE  \|u\|^2 (\theta^\top u)^2 = (n+2)\|\theta\|^2$, and for an even integer $k\geq 4$, using the Cauchy-Shwartz inequality we get
\begin{align}\notag 
\EE  \|u\|^k (\theta^\top u)^2 &\leq \left( \EE \|u\|^{2k}\right)^{1/2} \left( \EE (\theta^\top u)^4\right)^{1/2}\\ &\leq \sqrt{3}(n+2k)^{k/2}\|\theta \|^2,\label{k-momentTimesNorm}
\end{align}
where for the first term of product we used Lemma \ref{lemmaExpectationNormPower}, and for the second term we used 
\eqref{cookbook:result:4thorder} as
\[
\EE (\theta^\top u)^4 = \EE u^\top \theta\theta^\top u u^\top \theta\theta^\top u = 2\tr(\theta\theta^\top \theta\theta^\top) + \tr(\theta\theta^\top)^2 = 3\|\theta\|^4.
\]
Using \eqref{k-momentTimesNorm} and Lemma \ref{lemmaExpectationNormPower} we can bound $\EE\left[T_1(u,v)\right]$ as follows:
\begin{align}\notag
    \EE\left[T_1(u,v)\right] &\leq  2L_{2,q}^2\left( \sqrt{3}\eta^2(n+16)^4  + \eta^2 (n+6)^3 + \frac{\mu^6}{\eta^4}(m+6)^3 (n+2) +  \frac{\mu^6}{\eta^4}(m+6)^3  \right)\|\theta\|^2\\ &\leq  2L_{2,q}^2\left( 2\eta^2(n+16)^4 + \frac{\mu^6}{\eta^4}(m+6)^3 (n+3)\right)\|\theta\|^2.\label{T1:Bound}
\end{align}
For $T_2(u)$ we have
\begin{align}\label{T2:expand}
    \EE\left[ T_2(u) \right] &= \frac{3}{2} \EE\left[\langle u,\! \nabla^2_{xx} q~\! u\rangle^2(\theta^\top u)^2\|u\|^2 \right] + \frac{3}{2}\EE\left[ \langle u,\! \nabla^2_{xx} q~\! u\rangle^2\right]\|\theta\|^2.
\end{align}
We start by bounding the first term in \eqref{T2:expand}. 
To do this, using the definition of the expectation for any $\tau\in(0,1)$, we have
\begin{align*}
\EE \Big[\langle u,\! \nabla^2_{xx} q~\!& u \rangle^2(\theta^\top u)^2\|u\|^2 \Big]
\\&=\frac{1}{(2\pi)^{n/2}}\int_{\mathbb{R}^n} \langle u,\! \nabla^2_{xx} q~\! u \rangle^2(\theta^\top u)^2\|u\|^2 \exp(-\frac{\|u\|^2}{2})du\\ &=
\frac{1}{(2\pi)^{n/2}}\int_{\mathbb{R}^n} \|u\|^2\exp(-\frac{\tau\|u\|^2}{2}) \langle u,\! \nabla^2_{xx} q~\! u \rangle^2(\theta^\top u)^2 \exp(-\frac{(1-\tau)\|u\|^2}{2})du\\
&\leq \frac{2}{(2\pi)^{n/2}e\tau}\int_{\mathbb{R}^n}  \langle u,\! \nabla^2_{xx} q~\! u \rangle^2 (\theta^\top u)^2 \exp(-\frac{(1-\tau)\|u\|^2}{2})du\\&
= \frac{2}{(2\pi)^{n/2}e\tau(1-\tau)^{3+n/2}}\int_{\mathbb{R}^n}  \langle u,\! \nabla^2_{xx} q~\! u \rangle^2 (\theta^\top u)^2 \exp(-\frac{\|u\|^2}{2})du\\&
=\frac{2}{e\tau(1-\tau)^{3+n/2}}\EE \left[\langle u,\! \nabla^2_{xx} q~\! u \rangle^2(\theta^\top u)^2\right], 
\end{align*}
where the inequality is thanks to the relation $t^pe^{-\frac{\tau}{2}t^2}\leq (\frac{p}{\tau e})^{p/2}$ that holds for $\tau\in(0,1)$ and $t\geq 0$. The function $\tau(1-\tau)^{3+n/2}$ attains its maximum at $\tau^* = \frac{2}{8+n}$ and 
\[
{\tau^*}(1-\tau^*)^{3+n/2} = \frac{2}{(8+n)}\left(\frac{6+n}{8+n}\right)^{\frac{6+n}{2}}\geq \frac{2}{(8+n) e}.
\]
Customizing the above inequality for $\tau^*$ we get
\beq \label{eight2six:convert}
\EE\left[\langle u,\! \nabla^2_{xx} q~\! u\rangle^2(\theta^\top u)^2\|u\|^2 \right] \leq (8+n)\EE \left[\langle u,\! \nabla^2_{xx} q~\! u\rangle^2(\theta^\top u)^2\right].
\eeq 
Using the 6-th moment result of \cite{magnus1978moments} we know that for symmetric matrices $A$, $B$ and $C$:
\begin{align}\notag 
\EE u^\top A u u^\top B u u^\top C u = &\tr(A)\tr(B)\tr(C) + 2\tr(A)\tr(BC)\\& + 2\tr(B)\tr(AC) + 2\tr(C)\tr(AB) + 8\tr(ABC), \label{magnus:result}
\end{align}
and therefore
\begin{align}\notag 
\EE\left[ \langle u,\! \nabla^2_{xx} q~\! u\rangle^2(\theta^\top u)^2\right] &= \left[\tr(\nabla^2_{xx} q)\right]^2\|\theta\|^2 + 2\left\|\nabla^2_{xx} q\right\|^2_F\|\theta\|^2 + 4\tr(\nabla^2_{xx} q) \theta^\top \nabla^2_{xx} q ~\!\theta + 8  \left\| \nabla^2_{xx} q~\! \theta\right\|^2 \\ \notag  &\leq \left( n\|\nabla^2_{xx} q\|_F^2 + 2\|\nabla^2_{xx} q\|_F^2 + 4\sqrt{n}\|\nabla^2_{xx} q\|_F\|\nabla^2_{xx} q\| + 8\|\nabla^2_{xx} q\|^2\right)\|\theta\|^2\\ & \leq (5n+10)\|\nabla^2_{xx} q\|_F^2\|\theta\|^2,\label{ExpInner2Inner2}
\end{align}
where the second inequality uses the fact that for a symmetric matrix $A$, 
\begin{equation}\label{trace:ineq}
   \tr(A)\leq \sqrt{\rank(A)}\|A\|_F. 
\end{equation}
Together with \eqref{eight2six:convert} we conclude that 
\beq \label{thirdterm:bound}
\EE \left[\langle \nabla_{xx}^2 q~\! u,u\rangle^2(\theta^\top u)^2\|u\|^2\right]\leq 5(8+n)(n+2)\|\nabla_{xx}^2 q\|_F^2\|\theta\|^2.
\eeq 
For the second  term in \eqref{T2:expand} appealing once more to \eqref{cookbook:result:4thorder} we have
\begin{align}\notag 
\EE\left[ \langle u,\! \nabla^2_{xx} q~\! u\rangle^2\right] &= 2\left\|\nabla_{xx}^2 q\right\|_F^2 + \left[\tr\left(\nabla_{xx}^2 q\right)\right]^2\\&\leq (n+2)\left\|\nabla_{xx}^2 q\right\|_F^2.\label{expHessian:prod}
\end{align}
Using the results of \eqref{expHessian:prod} and \eqref{thirdterm:bound} in \eqref{T2:expand} yields 
\begin{align}
 \EE\left[ T_2(u) \right] \leq \frac{15}{2} (n+6)^2\left\|\nabla_{xx}^2 q\right\|_F^2\|\theta\|^2.\label{T2:Bound}
\end{align}
Next, focusing on $T_3(u,v)$ we have
\begin{equation}\label{T3:expand}
    \EE\left[T_3(u,v)\right] = \frac{3\mu^2}{\eta^2} \EE\left[\langle u,\! \nabla^2_{xy}q~\! v \rangle^2 (\theta^\top u)^2\|u\|^2\right] + \frac{3\mu^2}{\eta^2} \EE\left[ \langle u,\! \nabla^2_{xy}q~\! v \rangle^2 \right] \|\theta\|^2
\end{equation}
The first expectation on the right-hand side of \eqref{T3:expand} can be bounded as follows
\begin{align}\notag 
    \EE\left[\langle u,\! \nabla^2_{xy}q~\! v \rangle^2 (\theta^\top u)^2\|u\|^2\right] & = \EE_u\left[ \EE_v\left[u^\top \nabla^2_{xy}q~\! vv^\top \nabla^2_{xy}q^\top u (\theta^\top u)^2\|u\|^2\right] \right]\\ \notag 
 &=\EE_u\left[ u^\top \nabla^2_{xy}q~\! \nabla^2_{xy}q^\top u u^\top \theta\theta^\top u u^\top u \right]\\ \notag  &= (n+4)\|\nabla^2_{xy}q\|_F^2\|\theta\|^2  + 2(n+4)\theta^\top \nabla^2_{xy}q~\! \nabla^2_{xy}q^\top \theta \\&\leq 3(n+4)\|\nabla^2_{xy}q\|_F^2\|\theta\|^2 \label{T3:firstTerm}
\end{align}
where we used \eqref{magnus:result} for in the third equality. The second expectation on the right-hand side of \eqref{T3:expand} can be calculated as
\begin{align}\notag 
    \EE\left[\langle u,\! \nabla^2_{xy}q~\! v \rangle^2\right] & = \EE_u\left[ \EE_v\left[u^\top \nabla^2_{xy}q~\! vv^\top \nabla^2_{xy}q^\top u \right] \right]\\ \notag 
 &=\EE_u\left[ u^\top \nabla^2_{xy}q~\! \nabla^2_{xy}q^\top u  \right]\\ \notag  &= \|\nabla^2_{xy}q\|_F^2, 
\end{align}
which together with \eqref{T3:firstTerm} and \eqref{T3:expand} implies that 
\begin{equation}\label{T3:bound}
    \EE\left[T_3(u,v)\right] \leq \frac{3\mu^2}{\eta^2}(3n+13)\|\nabla^2_{xy}q\|_F^2\|\theta\|^2. 
\end{equation}
Finally, 
\begin{align}\notag
    \EE\left[T_4(u,v)\right] &=\frac{3\mu^4}{2\eta^4}\EE_v\left[\langle v,\! \nabla^2_{yy}q ~\! v \rangle^2\right]\EE_u\left[(\theta^\top u)^2\|u\|^2+\|\theta\|^2\right] \\& \leq \frac{3\mu^4}{2\eta^4}(m+2)(n+3)\|\nabla^2_{yy}q\|_F^2\|\theta\|^2,\label{T4:bound}
\end{align}
Applying an expectation to \eqref{T1-T4Expansion}, and using the bounds \eqref{T4:bound}, \eqref{T3:bound}, \eqref{T2:Bound}, and \eqref{T1:Bound} immediately implies the claim in Proposition \ref{hessMomentBounds}.

We now show part b). By definition \eqref{stochHessxy},
\begin{align}\notag 
    \left\|  \tilde \nabla_{xy}^2 q_{\eta,\mu}(x,y) \theta  \right\|^2 = \frac{\left(q(x+\eta u, y+\mu v)\! +\! q(x-\eta u,y-\mu v)\!-\! 2q(x,y )\right)^2}{\eta^2\mu^2}\|u\|^2(\theta^\top v)^2.
\end{align}
In the first part of Proposition \ref{hessMomentBounds} proof we showed that
\begin{align*}
    ((q(x+\eta u, y+\mu v)\!  + \! q(x-\eta u,y-\mu v)\!-\! 2q(x,y ))^2  \!\leq\!&~ 8 L_{2,q}^2\left(\|\eta u\|^6 + \|\mu v\|^6\right)+ 6\eta^4\langle u,\! \nabla^2_{xx} q(x,y) u\rangle^2 \\& +\! 12\eta^2 \mu^2 \langle u,\! \nabla^2_{xy}q(x,y) v \rangle^2 \!+\! 6\mu^4 \langle v,\! \nabla^2_{yy}q(x,y) v \rangle^2.
\end{align*}
Using this result and dropping the dependency of $q$ to $(x,y)$ for brevity, we get 
\begin{align}\notag 
    \left\|  \tilde \nabla_{xy}^2 q_{\eta,\mu} \theta  \right\|^2 \leq & \underbrace{8L_{2,q}^2\left( \frac{\eta^4}{\mu^2}\|u\|^6 + \frac{\mu^4}{\eta^2}\|v\|^6\right)\|u\|^2(\theta^\top v)^2}_\text{$T_1(u,v)$} + \underbrace{\frac{6\eta^2}{\mu^2}  \langle u,\! \nabla^2_{xx} q~\! u\rangle^2 \|u\|^2(\theta^\top v)^2}_\text{$T_2(u,v)$} \\ & + \underbrace{12 \langle u,\! \nabla^2_{xy}q~\! v \rangle^2 \|u\|^2(\theta^\top v)^2}_\text{$T_3(u,v)$}  + \underbrace{\frac{6\mu^2}{\eta^2} \langle v,\! \nabla^2_{yy}q ~\! v \rangle^2 \|u\|^2(\theta^\top v)^2}_\text{$T_4(u,v)$}. \label{2ndmomentExpandxy}
\end{align}
In the sequel we bound the expectation of $T_1(u,v)$ through $T_4(u,v)$. For the first term we have
\begin{align}\notag 
    \EE\left[T_1(u,v)\right] &= 8L_{2,q}^2\EE\left[ \frac{\eta^4}{\mu^2}\|u\|^8(\theta^\top v)^2 + \frac{\mu^4}{\eta^2}\|u\|^2\|v\|^6(\theta^\top v)^2 \right]\\&\leq 8L_{2,q}^2\left[ \frac{\eta^4}{\mu^2}(n+8)^4 + \frac{2\mu^4}{\eta^2} n(m+12)^3\right]\|\theta\|^2,\label{T1Bound-xy}
\end{align}
where we used Lemma \ref{lemmaExpectationNormPower} and \eqref{k-momentTimesNorm}. 

For $T_2(u,v)$ we have
\begin{align}\notag 
    \EE\left[ T_2(u,v) \right] &= \frac{6\eta^2}{\mu^2} \EE_u\left[\langle u,\! \nabla^2_{xx} q~\! u\rangle^2\|u\|^2 \right] \EE_v\left[ (\theta^\top v)^2\right]\\ \notag &= \frac{6\eta^2}{\mu^2} \EE_u\left[\langle u,\! \nabla^2_{xx} q~\! u\rangle^2\|u\|^2 \right] \|\theta\|^2\\ \notag &=
    \frac{6\eta^2}{\mu^2}\left((n+4)\left[\tr(\nabla^2_{xx} q)\right]^2 + (2n+8)\left\|\nabla^2_{xx} q\right\|^2_F\right)\|\theta\|^2 \\ & \leq \frac{6\eta^2}{\mu^2}(n+4)(n+2)\|\nabla^2_{xx} q\|_F^2\|\theta\|^2,  
    \label{T2:expandxy}
\end{align}
where we used \eqref{magnus:result} for the third equality and \eqref{trace:ineq} for the last inequality.

The expectation of $T_3(u,v)$ can be rewritten as
\begin{align}\notag 
    \EE\left[ T_3(u,v)\right] &= 12\EE\left[v^\top \nabla^2_{xy}q^\top u u^\top u u^\top \nabla^2_{xy}q v v^\top \theta\theta^\top v \right]\\\notag &=12\EE_v\left[ v^\top \nabla^2_{xy}q^\top \EE_u\left[u u^\top u u^\top \right]\nabla^2_{xy}q v v^\top \theta\theta^\top v\right]\\ \notag & = 12(n+2)\EE\left[ v^\top \nabla^2_{xy}q^\top \nabla^2_{xy}q v v^\top \theta\theta^\top v\right]\\\notag & = 12(n+2)\left(2\theta^\top \nabla^2_{xy}q^\top \nabla^2_{xy}q\theta +  \left\|\nabla^2_{xy}q\right\|_F^2\|\theta\|^2\right)\\&\leq 36(n+2)  \left\|\nabla^2_{xy}q\right\|_F^2\|\theta\|^2,\label{T3:boundxy}
\end{align}
where we used \eqref{cookbook:result:4thorder} in the fourth equality. 

Finally, the last term in \eqref{2ndmomentExpandxy} can be bounded as
\begin{align}\notag 
    \EE\left[T_4(u,v)\right] &= \frac{6\mu^2}{\eta^2}\EE_u\left[\|u\|^2\right]\EE_v\left[\langle v,\! \nabla^2_{yy}q ~\! v \rangle^2 (\theta^\top v)^2\right]\\&\leq \frac{30\mu^2}{\eta^2}n(m+2)\|\nabla^2_{yy} q\|_F^2\|\theta\|^2,\label{T4:Boundxy}
\end{align}
where the expectation with respect to $v$ is bounded in an identical way as \eqref{ExpInner2Inner2}. 

Applying the individual term bounds in \eqref{T4:Boundxy}, \eqref{T3:boundxy}, \eqref{T2:expandxy} and \eqref{T1Bound-xy} to \eqref{2ndmomentExpandxy} validates the advertised result.

\subsection{Proof of Proposition 
\ref{zeroth-order bilevel}}\label{zeroth-order bilevel:proof}
We consider proving the more comprehensive version of Proposition \ref{zeroth-order bilevel} as follows.
\begin{proposition}\label{zeroth-order bilevelFULL}
Consider the bilevel optimization problem \eqnok{main_prob_st} and its Gaussian smooth approximation \eqref{main_prob_zst}. 
\begin{itemize}
    \item [a)] If $f\in\mathcal{C}^0(X\times \R^m,L_{0,f})$ and $g\in\mathcal{C}^0(X\times \R^m,L_{0,g})$, then
    \begin{align}
    &\|y_{\eta_2,\mu_2}^*(x)-y^*(x)\|^2 \le \frac{4L_{0,g}}{\lambda_g} \left(\eta_2^2n + \mu_2^2 m\right)^{\frac{1}{2}}, \label{distysL0FULL}\\
      &|\psi_{\eta,\mu}(x) - \psi(x)| \le L_{0,f} \left[2\sqrt{\frac{L_{0,g}}{\lambda_g}} \left(\eta_2^2n + \mu_2^2 m\right)^{\frac{1}{4}} +\left(\eta_1^2n + \mu_1^2 m\right)^{\frac{1}{2}} \right]. \label{diffPsi1FULL}
    \end{align}
    
    \item [b)] If $f\in\mathcal{C}^1(X\times \R^m,L_{1,f})$ and $g\in\mathcal{C}^1(X\times \R^m,L_{1,g})$, then
    \begin{align}
   & \|y_{\eta_2,\mu_2}^*(x)-y^*(x)\|^2 \leq \frac{2 L_{1,g}}{\lambda_g }\left(\eta_2^2n + \mu_2^2 m\right), \label{distysL1FULL}\\ 
    &\|\nabla \psi_{\eta,\mu}(x) - \nabla \psi(x)\| \le L_{1, f} \sqrt{\frac{2 L_{1,g}}{\lambda_g }\left(\eta_2^2n + \mu_2^2 m\right)}\nonumber \\
    & \qquad \qquad \qquad \qquad \qquad + \frac{L_{1,f}}{2}\left(\eta_1(n+3)^{\frac{3}{2}} + \frac{\mu_1^2}{\eta_1} m n^{\frac{1}{2}} + \frac{\eta_1^2}{\mu_1} n m^{\frac{1}{2}}+ \mu_1(m+3)^{\frac{3}{2}} \right).\label{grad_diffFULL}
    \end{align}
    
\end{itemize}
\end{proposition}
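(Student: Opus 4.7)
The plan is to prove both parts with a common skeleton. First, in each part I would bound the distance $\|y^*_{\eta_2,\mu_2}(x) - y^*(x)\|$ by exploiting strong convexity of $g_{\eta_2,\mu_2}(x,\cdot)$---inherited from $g(x,\cdot)$ with the same modulus $\lambda_g$ by Proposition~\ref{general_prop}(a)---together with the pointwise envelope on $|g_{\eta_2,\mu_2}-g|$ supplied by Proposition~\ref{propNestApprox}. Then I would transfer the distance bound to a bound on $\psi_{\eta,\mu}-\psi$ (for the $\mathcal{C}^0$ case) or on $\nabla\psi_{\eta,\mu}-\nabla\psi$ (for the $\mathcal{C}^1$ case) by inserting a suitable hybrid evaluation and invoking Lipschitz continuity of $f$ or $\nabla f$ at the point $(x,y^*_{\eta_2,\mu_2}(x))$.

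Concretely, for \eqref{distysL0FULL} and \eqref{distysL1FULL} I would start from the strong-convexity inequality
\begin{equation*}
\tfrac{\lambda_g}{2}\|y^*_{\eta_2,\mu_2}(x)-y^*(x)\|^2 \le g_{\eta_2,\mu_2}(x,y^*(x)) - g_{\eta_2,\mu_2}(x,y^*_{\eta_2,\mu_2}(x)),
\end{equation*}
insert $\pm g(x,y^*(x))$ and $\pm g(x,y^*_{\eta_2,\mu_2}(x))$ on the right, discard the nonnegative cross term $g(x,y^*_{\eta_2,\mu_2}(x))-g(x,y^*(x))\ge 0$ (since $y^*(x)$ minimizes $g(x,\cdot)$), and bound the remaining two discrepancies with the $\mathcal{C}^0$ estimate of Proposition~\ref{propNestApprox}(a) for part~(a) and with the tighter $\mathcal{C}^1$ estimate of Proposition~\ref{propNestApprox}(b) for part~(b); dividing through by $\lambda_g/2$ yields the two advertised distance bounds. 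For the objective gap \eqref{diffPsi1FULL} I would then split
\begin{equation*}
|\psi_{\eta,\mu}(x)-\psi(x)| \le |f_{\eta_1,\mu_1}(x,y^*_{\eta_2,\mu_2}(x))-f(x,y^*_{\eta_2,\mu_2}(x))| + |f(x,y^*_{\eta_2,\mu_2}(x))-f(x,y^*(x))|,
\end{equation*}
controlling the first summand by Proposition~\ref{propNestApprox}(a) applied with smoothing parameters $(\eta_1,\mu_1)$, and controlling the second by $L_{0,f}$-Lipschitz continuity of $f$ combined with the square root of \eqref{distysL0FULL}; the two contributions line up exactly with the two terms displayed in \eqref{diffPsi1FULL}.

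For the gradient bound \eqref{grad_diffFULL} I would perform an analogous insert-and-subtract through the hybrid hyper-gradient obtained by keeping the smoothed lower-level minimizer $y^*_{\eta_2,\mu_2}(x)$ and the smoothed Hessian blocks of $g_{\eta_2,\mu_2}$ in place but replacing $\nabla f_{\eta_1,\mu_1}$ by $\nabla f$. The first resulting piece is driven only by $\nabla f_{\eta_1,\mu_1}-\nabla f$ evaluated at the common point $(x,y^*_{\eta_2,\mu_2}(x))$, and Proposition~\ref{propNestApprox}(b) immediately delivers the $U^{(1)}_x+U^{(1)}_y$ contribution, matching the second bracketed line of \eqref{grad_diffFULL}. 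The remaining piece measures the sensitivity of the exact hyper-gradient to the $y$-displacement and collapses, through Lemma~\ref{grad_f_error}(a), to a constant times $\|y^*_{\eta_2,\mu_2}(x)-y^*(x)\|$, into which the just-proved \eqref{distysL1FULL} is substituted to yield the first term of \eqref{grad_diffFULL}. The main obstacle is that both $\nabla\psi_{\eta,\mu}$ and $\nabla\psi$ carry Hessian-inverse factors $[\nabla^2_{yy}g_{(\cdot)}]^{-1}$ that do not cancel under a naive insert-and-subtract; the decomposition has to be arranged so that those Hessian blocks either collapse against each other or are absorbed into a single Lipschitz-type constant, and Lemma~\ref{grad_f_error}(a) is the natural device for packaging this algebra so that only the clean coefficient $L_{1,f}$ survives in front of the distance bound.
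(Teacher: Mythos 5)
Your handling of \eqref{distysL0FULL}, \eqref{distysL1FULL}, and \eqref{diffPsi1FULL} is correct and is essentially the paper's argument: the paper adds the two strong-convexity inequalities written at $y^*(x)$ and $y^*_{\eta_2,\mu_2}(x)$, whereas you use one strong-convexity inequality plus optimality of $y^*(x)$, and you insert $f(x,y^*_{\eta_2,\mu_2}(x))$ where the paper inserts $f_{\eta_1,\mu_1}(x,y^*(x))$; either way the same two applications of Proposition~\ref{propNestApprox} and the Lipschitz property give identical constants.

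The gap is in your route to \eqref{grad_diffFULL}. The hybrid hyper-gradient you propose (smoothed Hessian blocks of $g_{\eta_2,\mu_2}$ kept, $\nabla f_{\eta_1,\mu_1}$ replaced by $\nabla f$) does not deliver the stated bound. First, in the piece driven by $\nabla f_{\eta_1,\mu_1}-\nabla f$, the $y$-component is multiplied by $\nabla^2_{xy}g_{\eta_2,\mu_2}\left[\nabla^2_{yy}g_{\eta_2,\mu_2}\right]^{-1}$, whose norm is only bounded by $L_{1,g}/\lambda_g\ge 1$, so you obtain $U^{(1)}_x+\tfrac{L_{1,g}}{\lambda_g}U^{(1)}_y$ rather than the clean $U^{(1)}_x+U^{(1)}_y$ appearing in the second line of \eqref{grad_diffFULL}. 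Second, Lemma~\ref{grad_f_error}(a) compares $\bar \nabla (f,g)$ built from the Hessian blocks of $g$ (not $g_{\eta_2,\mu_2}$) with $\nabla\psi$; to invoke it from your hybrid you would additionally have to control the discrepancies $\nabla^2_{xy}g_{\eta_2,\mu_2}-\nabla^2_{xy}g$ and the corresponding inverse blocks, which Proposition~\ref{propNestApprox} only supplies under $g\in\mathcal{C}^2$, not the $\mathcal{C}^1$ hypothesis of part (b); and even then the resulting coefficient is $C_1$, not the advertised $L_{1,f}$. So the very obstacle you flag at the end is not actually resolved by your decomposition. The paper's proof never touches the Hessian-inverse factors: it bounds $\|\nabla f_{\eta,\mu}(x,y^*_{\eta_2,\mu_2}(x))-\nabla f(x,y^*(x))\|$ by inserting the intermediate point $\nabla f_{\eta,\mu}(x,y^*(x))$, controlling the first difference by Lipschitz continuity of $\nabla f_{\eta,\mu}$ (Proposition~\ref{general_prop}(b), constant $L_{1,f}$) and the second by Proposition~\ref{propNestApprox}(b), then substituting \eqref{distysL1FULL}; that is where both terms of \eqref{grad_diffFULL} and the clean $L_{1,f}$ coefficient come from, and your write-up would need to be reorganized along those lines (or supply the missing Hessian-block estimates) to close the argument.
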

First, note that Assumption \ref{fg_assumption}(b) implies that for a fixed $x$, any pair of points $y_1, y_2\in\R^m$ obey:
\begin{equation}\label{strConvg}
    g(x,y_2) - g(x,y_1)\geq \nabla_y g(x,y_1)^\top (y_2-y_1) + \frac{\lambda_g}{2}\|y_2-y_1\|^2.
\end{equation}
By part (a) of Proposition \ref{general_prop}, $g_{\eta_2,\mu_2}(x,y_2)$ also has a similar modulus of convexity, and
\begin{equation}\label{strConvgem}
    g_{\eta_2,\mu_2}(x,y_2) - g_{\eta_2,\mu_2}(x,y_1)\geq \nabla_y g_{\eta_2,\mu_2}(x,y_1)^\top (y_2-y_1) + \frac{\lambda_g}{2}\|y_2-y_1\|^2.
\end{equation}
Since $y^*(x)$ and $y_{\eta_2,\mu_2}^*(x)$ are respectively the optimal points for $g(x,y)$ and $g_{\eta_2,\mu_2}(x,y)$, appealing to \eqref{strConvg} and \eqref{strConvgem}, one gets
\begin{align*}
    \frac{\lambda_g}{2}\|y_{\eta_2,\mu_2}^*(x)-y^*(x)\|^2 &\leq g(x,y_{\eta_2,\mu_2}^*(x)) - g(x,y^*(x)),\\
    \frac{\lambda_g}{2}\|y_{\eta_2,\mu_2}^*(x)-y^*(x)\|^2 &\leq g_{\eta_2,\mu_2}(x,y^*(x)) - g_{\eta_2,\mu_2}(x,y_{\eta_2,\mu_2}^*(x)).
\end{align*}
Adding the two inequalities, we are able to bound the distance between $y_{\eta_2,\mu_2}^*(x)$ and $y^*(x)$ as
\begin{align}\notag 
    \|y_{\eta_2,\mu_2}^*(x)-y^*(x)\|^2 &\leq \frac{2}{\lambda_g}\left |g_{\eta_2,\mu_2}(x,y_{\eta_2,\mu_2}^*(x)) - g(x,y_{\eta_2,\mu_2}^*(x))\right | + \frac{2}{\lambda_g}\left |g_{\eta_2,\mu_2}(x,y^*(x)) - g(x,y^*(x))\right |\\ &\leq \frac{4L_{0,g}}{\lambda_g} \left(\eta_2^2n + \mu_2^2 m\right)^{\frac{1}{2}},\label{distysL0} 
\end{align}
where the second inequality is thanks to part (a) of Proposition \ref{propNestApprox}. Alternatively, an application of part (b) of the same proposition implies \eqref{distysL1FULL}.
In view of this and part (b) of Proposition \ref{general_prop} we are able to bound the discrepancy  between $\psi_{\eta,\mu}(x)$  and  $\psi(x)$ as 
\begin{align}\notag 
|\psi_{\eta,\mu}(x) - \psi(x)|&=
    \left|f_{\eta_1,\mu_1}(x,y_{\eta_2,\mu_2}^*(x)) - f(x,y^*(x))\right| \\ \notag 
    &\le \left|f_{\eta_1,\mu_1}(x,y_{\eta_2,\mu_2}^*(x)) - f_{\eta_1,\mu_1}(x,y^*(x))\right|
    +\left|f_{\eta_1,\mu_1}(x,y^*(x)) - f(x,y^*(x))\right|\\ \notag  &\leq L_{0,f}   \left\|y_{\eta_2,\mu_2}^*(x)-y^*(x)\right\| + L_{0,f}\left(\eta_1^2n + \mu_1^2 m\right)^{\frac{1}{2}}\\&\leq 2L_{0,f}\sqrt{\frac{ L_{0,g}}{\lambda_g}} \left(\eta_2^2n + \mu_2^2 m\right)^{\frac{1}{4}} + L_{0,f}\left(\eta_1^2n + \mu_1^2 m\right)^{\frac{1}{2}},\notag
\end{align}
which supports the claim in \eqref{diffPsi1FULL}. 

Finally, under the assumptions of part (b), and using part (b) of Proposition \ref{propNestApprox} we obtain
\begin{align*}
    \big \|\nabla f_{\eta,\mu}(x,&y_{\eta_2,\mu_2}^*(x)) - \nabla f(x,y^*(x))\big\| \\
    &\le \left \|\nabla f_{\eta,\mu}(x,y_{\eta_2,\mu_2}^*(x)) - \nabla f_{\eta,\mu}(x,y^*(x))\right\| +\left\|\nabla f_{\eta,\mu}(x,y^*(x)) - \nabla f(x,y^*(x))\right\|\\
    &\le L_{1, f} \left\|y_{\eta_2,\mu_2}^*(x) - y^*(x) \right\| + \frac{L_{1,f}}{2}\left(\eta_1(n+3)^{\frac{3}{2}} + \frac{\mu_1^2}{\eta_1} m n^{\frac{1}{2}} + \frac{\eta_1^2}{\mu_1} n m^{\frac{1}{2}}+ \mu_1(m+3)^{\frac{3}{2}} \right),
\end{align*}
which together with \eqref{distysL1FULL}, imply \eqref{grad_diffFULL}.

\subsection{Proof of Theorem \ref{sgd:hessInv}}\label{sgd:hessInv:proof}
The proof is comprised of two parts. In the first part (Lemma \ref{sgd:quadratic}), we consider the quadratic minimization problem
\begin{equation}
    \minimize_z \frac{1}{2}z^\top H z - h^\top z, \label{quad:prog}
\end{equation}
where $H = \EE_u [H(u)]\succ 0$,  $h = \EE_w [h(w)]$, and $u$ and $w$ are some random vectors. We provide convergence results for this problem considering the SGD updates
\begin{equation}
  z_{k+1} = z_k - \gamma_k (H(u_k)z_k - h(w_k)).\label{sgd:iterate}  
\end{equation}
As will be stated in Lemma \ref{sgd:quadratic} below, the convergence results are in terms of some moment terms $\bar V$ and $V_H$. In the second part of proof, we focus on the customized problem \eqref{zbar:program}, and show that as an instance of \eqref{quad:prog}, the conditions stated by Lemma \ref{sgd:quadratic} can be established for parameters that will be derived.

\begin{lemma}\label{sgd:quadratic}
Consider the minimization \eqref{quad:prog} and the SGD updates \eqref{sgd:iterate}, such that $H = \EE_u [H(u)]$ and  $h = \EE_w [h(w)]$. Assume that
$0\prec L_{\min}I\preceq H\preceq L_{\max}I$,
and for a given $z$ and the optimal solution $z^* = H^{-1}h$:
        \begin{equation}
            \EE_u \|H(u)z\|^2\leq V_H\|z\|^2, ~~~~~~\EE_{u,w} \|H(u)z^* - h(w)\|^2\leq \bar V,
        \end{equation}  
where $V_H$ and $\bar V$ are some positive scalars. Choose $0<\gamma < \min\left\{ \frac{2}{L_{\min}+L_{\max}}, \frac{L_{\min}}{2V_H}\right\}$ and set $\gamma_k = \gamma$, then
\begin{align}
    \label{eqSGD:Var}
    \EE \| z_N - z^*\|^2 &\leq (1-\gamma L_{\min})^N\|z_0 - z^*\|^2 + \frac{2\gamma \bar V}{L_{\min}},\\ 
    \|\EE z_N - z^*\|^2 &\leq (1-\gamma L_{\min})^N\|z_0 - z^*\|^2, \label{eqSGD:bias}
\end{align}
where $z_N$ is the $N$-th SGD update. 
\end{lemma}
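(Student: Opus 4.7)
The plan is to obtain the two bounds separately, both via standard SGD-style recursions tailored to the quadratic setting, with the randomness of $H(u_k)$ and $h(w_k)$ handled through their first and second moments. For the bias bound \eqref{eqSGD:bias}, I would simply take an expectation of \eqref{sgd:iterate}: since $u_k,w_k$ are drawn independently of the history, $\EE[H(u_k)z_k-h(w_k)\mid z_k]=Hz_k-h$, and combined with $z^*=H^{-1}h$ this yields
\begin{equation*}
\EE[z_{k+1}]-z^* = (I-\gamma H)(\EE[z_k]-z^*).
\end{equation*}
Under $L_{\min} I\preceq H\preceq L_{\max} I$ and $\gamma<2/(L_{\min}+L_{\max})$, a one-line spectral estimate gives $\|I-\gamma H\|\le 1-\gamma L_{\min}$. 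Iterating $N$ times and squaring then produces $\|\EE z_N-z^*\|^2\le (1-\gamma L_{\min})^{2N}\|z_0-z^*\|^2$, which is in fact stronger than the claim.

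For the variance bound \eqref{eqSGD:Var}, I would expand $\|z_{k+1}-z^*\|^2$ and take a conditional expectation given $z_k$. The cross term equals $-2\gamma(z_k-z^*)^\top H(z_k-z^*)$ after using $Hz^*=h$, which is bounded above by $-2\gamma L_{\min}\|z_k-z^*\|^2$. The stochastic term I would decompose by the basic inequality
\begin{equation*}
\|H(u_k)z_k - h(w_k)\|^2 \le 2\|H(u_k)(z_k-z^*)\|^2 + 2\|H(u_k)z^* - h(w_k)\|^2,
\end{equation*}
so that the two moment hypotheses give an expected upper bound $2V_H\|z_k-z^*\|^2+2\bar V$. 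Taking a full expectation and combining, I arrive at the one-step recursion
\begin{equation*}
\EE\|z_{k+1}-z^*\|^2 \le \bigl(1-2\gamma L_{\min}+2\gamma^2 V_H\bigr)\,\EE\|z_k-z^*\|^2 + 2\gamma^2\bar V.
\end{equation*}

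The point that needs the most care, and is really the only non-bookkeeping step, is absorbing the $\gamma^2$ term into the linear one: the hypothesis $\gamma\le L_{\min}/(2V_H)$ is precisely what forces $2\gamma^2 V_H\le \gamma L_{\min}$, tightening the contraction factor to $1-\gamma L_{\min}$. Unrolling the recursion from $k=0$ to $N-1$ and summing the geometric tail $\sum_{k\ge 0}(1-\gamma L_{\min})^k=1/(\gamma L_{\min})$ then yields $(1-\gamma L_{\min})^N\|z_0-z^*\|^2+2\gamma\bar V/L_{\min}$, which is \eqref{eqSGD:Var}. I do not anticipate a substantive obstacle; the only two things to verify along the way are the factorization $Hz_k-h=H(z_k-z^*)$ used in the cross term and the spectral estimate on $I-\gamma H$ under the prescribed step size.
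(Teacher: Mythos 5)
Your proposal is correct and follows essentially the same route as the paper's proof: the same conditional-expectation expansion with the cross term bounded via $H\succeq L_{\min}I$, the same decomposition $\|H(u_k)z_k-h(w_k)\|^2\le 2\|H(u_k)(z_k-z^*)\|^2+2\|H(u_k)z^*-h(w_k)\|^2$ with the step-size condition $\gamma\le L_{\min}/(2V_H)$ absorbing the $\gamma^2 V_H$ term, and the same spectral estimate on $I-\gamma H$ for the bias recursion (your observation that this in fact yields the exponent $2N$ matches the paper, which simply relaxes it to $N$).
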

\begin{proof}
We start by proving \eqref{eqSGD:Var}. Let $\EE_{|k}$ denote the conditional expectation given $z_k$. Subtracting $z^*$ from both sides of \eqref{sgd:iterate} and taking a squared norm gives
\begin{align*}
    \EE_{|k}\|z_{k+1}-z^*\|^2 = \|z_k-z^*\|^2 -2\gamma(z_k-z^*)^\top H(z_k-z^*) + \gamma^2 \EE_{|k} \|H(u_k)z_k - h(w_k)\|^2. 
\end{align*}
The last squared term above can be bounded as
\begin{align*}
    \EE_{|k} \|H(u_k)z_k - h(w_k)\|^2 &= \EE_{|k}\|H(u_k)(z_k-z^*) + H(u_k)z^* - h(w_k)\|^2\\&\leq 2\EE_{|k}\|H(u_k)(z_k-z^*)\|^2 +2\EE_{|k} \|H(u_k)z^* - h(w_k)\|^2,\\&\leq 2V_H\|(z_k-z^*)\|^2 + 2\bar V,
\end{align*}
and therefore 
\begin{align*}
    \EE_{|k}\|z_{k+1}-z^*\|^2 &\leq  \|z_k-z^*\|^2 -2\gamma (L_{\min} - \gamma V_H )\|z_k-z^*\|^2 + 2\gamma^2 \bar V\\&  \leq (1-\gamma L_{\min})\|z_k-z^*\|^2 + 2\gamma^2 \bar V
\end{align*}
where the second inequality is thanks to $\gamma< L_{\min}/(2V_H)$ which implies $2\gamma (L_{\min} - \gamma V_H )> \gamma L_{\min}$. Applying an expectation with respect to $z_k$ to both sides gives
\begin{equation}\label{telescop:eq}
    \EE\| z_{k+1}-z^*\|^2\leq (1-\gamma L_{\min})\EE\| z_k-z^*\|^2 + 2\gamma^2\bar V.
\end{equation}
Notice that thanks to $\gamma\leq 2/(L_{\min}+L_{\max})\leq 1/L_{\min}$, we have $0<(1-\gamma L_{\min})<1$, and recursively applying \eqref{telescop:eq} gives
\begin{align*}
    \EE\|z_N - z^*\|^2 &\leq (1-\gamma L_{\min})^N\|z_0 - z^*\|^2 + 2\gamma^2 \bar V\sum_{k=0}^{N-1} (1-\gamma L_{\min})^k \\& \leq  (1-\gamma L_{\min})^N\|z_0 - z^*\|^2 + \frac{2\gamma \bar V}{L_{\min}},
\end{align*}
which completes the proof of \eqref{eqSGD:Var}.

To prove \eqref{eqSGD:bias}, the SDG update implies that
\[
\EE_{|k} z_{k+1} - z^* = z_k -z^* -  \gamma(Hz_k - h) = z_k -z^* -  \gamma H(z_k - z^*).
\]
Taking an expectation of both side with respect to $z_k$ and taking a norm gives
\begin{equation}
    \|\EE z_{k+1} - z^*\| = \|(I -\gamma H)\EE(z_k-z^*)\| \leq  \|I -\gamma H\|\|\EE z_k-z^*\|.
\end{equation}
Since $\gamma\leq 2/(L_{\min}+L_{\max})$, by the Weyl's inequality, all the eigenvalues of $I - \gamma H$ are within the interval $[0, 1-\gamma L_{\min}]$ and we have $ \|\EE z_{k+1} - z^*\| \leq (1-\gamma L_{\min})\|\EE z_k-z^*\|$, or
\begin{equation}\label{eqWeyl}
\|\EE z_N - z^*\|^2 \leq (1-\gamma L_{\min})^N\|z_0 - z^*\|^2,
\end{equation}
which completes the lemma proof. 
\end{proof}
In the remainder of Theorem \ref{sgd:hessInv} proof, we customize Lemma \ref{sgd:quadratic} for the quadratic problem \ref{zbar:program}. 

To establish the conditions $H = \EE_u [H(u)]$ and  $h = \EE_w [h(w)]$ of Lemma \ref{sgd:quadratic} for problem \ref{zbar:program}, note that
$\tilde \nabla J(z)$ can be written as $ \tilde \nabla_{yy}^2 G_{\eta_2,\mu_2}z - \tilde \nabla F_{\eta_1,\mu_1}$ where
\begin{align}\notag 
\tilde \nabla_{yy}^2 G_{\eta_2,\mu_2} &:=  \Big( v v^\top - I_m\Big) \left[\frac{G(\bar x+\eta_2 u, \bar y+\mu_2 v, \zeta) + G(\bar x-\eta_2 u,\bar y-\mu_2 v, \zeta)-2G(\bar x,\bar y,\zeta)}{2\mu_2^2} \right],\\   \tilde \nabla_y F_{\eta_1,\mu_1}&:=v'\!\left[  \frac{F(\bar x+\eta_1 u', \bar y+\mu_1 v', \xi) - F(\bar x, \bar y,\xi)}{\mu_1} \ \right].
\end{align}
By the definition we have
\begin{align}\notag 
\mathbb{E}_\zeta \tilde \nabla_{yy}^2 G_{\eta_2,\mu_2} =  \Big( v v^\top - I_m\Big) \left[\frac{g(\bar x+\eta_2 u, \bar y+\mu_2 v) + g(\bar x+\eta_2 u,\bar y-\mu_2 v)-2g(\bar x+\eta_2 u,\bar y)}{2\mu_2^2} \right],
\end{align}
and using \eqref{Hessian_y} we get
\[
\mathbb{E}_{u,v}\mathbb{E}_\zeta \tilde \nabla_{yy}^2 G_{\eta_2,\mu_2}= \nabla^2_{yy} g_{\eta_2,\mu_2}(\bar x,\bar y).
\]
In a similar fashion we can show that $\mathbb{E}_{u',v'}\mathbb{E}_\xi \tilde \nabla_y F_{\eta_1,\mu_1}= \nabla_{y} f_{\eta_1,\mu_1}(\bar x,\bar y)
$. 

Next, to find the quantities $L_{\min}$ and $L_{\max}$ of Lemma \ref{sgd:quadratic} that correspond to problem \ref{zbar:program}, notice that for a test vector $y_0$:
\[
y_0^\top \nabla^2_{yy} g_{\eta_2,\mu_2}(\bar x,\bar y)  y_0 = (0,y_0)^\top \nabla^2 g_{\eta_2,\mu_2}(\bar x,\bar y)(0,y_0) \geq \lambda_g \|y_0\|^2,
\]
where the inequality is thanks to part (a) of Proposition \ref{general_prop}. In a similar fashion, this time using part (b) of Proposition \ref{general_prop} we get $y_0^\top \nabla^2_{yy} g_{\eta_2,\mu_2}(\bar x,\bar y)  y_0\leq L_{2,g}\|y_0\|^2$. Together, they imply that
\begin{equation}
  \lambda_gI  \preceq \nabla^2 g_{\eta_2,\mu_2}(\bar x,\bar y) \preceq L_{2,g}I,
\end{equation}
indicating that when applied to problem \ref{zbar:program}, $L_{\min}$ and $L_{\max}$ of Lemma \ref{sgd:quadratic} need to be set to $\lambda_g$ and $L_{2,g}$, respectively. 

As the last two steps, for $z$ being a given vector and $\bar z$ the optimal solution in \eqref{zbar:program}, we bound  $\EE\| \tilde \nabla_{yy}^2 G_{\eta_2,\mu_2}z\|^2$ and $\EE\|  \tilde \nabla_{yy}^2 G_{\eta_2,\mu_2}\bar z - \tilde \nabla_y F_{\eta_1,\mu_1}\|^2$ to find $V_H$ and $\bar V$.   

Using Proposition \ref{hessMomentBounds} customized for $\tilde \nabla_{yy}^2$, we have
\begin{align*}\notag 
\EE_{u,v}\Big\|  \tilde \nabla_{yy}^2 G_{\eta_2,\mu_2}z  \Big\|^2  \leq & ~2L_{2,G}^2\left( 2\mu_2^2(m+16)^4 + \frac{\eta_2^6}{\mu_2^4}(n+6)^3 (m+3)\right)\|z\|^2\\ \notag &+ \bigg( \frac{15}{2} (m+6)^2\left\|\nabla_{yy}^2 G(\bar x,\bar y,\zeta)\right\|_F^2 + \frac{3\eta_2^2}{\mu_2^2}(3m+13)\|\nabla^2_{yx}G(\bar x,\bar y,\zeta)\|_F^2 \\& \qquad\qquad\qquad\quad  \qquad\qquad\qquad+ \frac{3\eta_2^4}{2\mu_2^4}(n+2)(m+3)\|\nabla^2_{xx}G(\bar x,\bar y,\zeta)\|_F^2\bigg)\|z\|^2.
\end{align*}
Assuming $\eta_2/\mu_2=\mathcal{O}(1)$ 
\begin{align*}\notag 
\EE_{u,v}\Big\|  \tilde \nabla_{yy}^2 G_{\eta_2,\mu_2}z  \Big\|^2 \!\!\leq & \! ~ c\mu_2^2 L_{2,G}^2m\left( m+n\right)^3\|z\|^2 \!+\!c m\left(m+n\right)\!\Big(\!\left\|\nabla_{yy}^2 G(\bar x,\bar y,\zeta)\right\|_F^2\!+ \!2\!\left\|\nabla_{yx}^2 G(\bar x,\bar y,\zeta)\right\|_F^2  \\ \notag &~~~~~~~~~~~~~~~~~~~~~~~~~~~~~~~~~~~~~~~~~~~~~~~~~~~~~+\left\|\nabla_{xx}^2 G(\bar x,\bar y,\zeta)\right\|_F^2\Big)\|z\|^2 \\ \notag =&~ c\mu_2^2 L_{2,G}^2m\left( m+n\right)^3\|z\|^2 + c m\left(m+n\right)\left\|\nabla^2 G(\bar x,\bar y,\zeta)\right\|_F^2\|z\|^2,
\end{align*}
where $c$ is a sufficiently large constant. Finally, taking an expectation of both sides of the inequality above with respect to $\zeta$, and using the fact that for a given matrix $A$, $\|A\|_F\leq \sqrt{\rank(A)}\|A\|$ yields 
\begin{align}\notag 
\EE\Big\|  \tilde \nabla_{yy}^2 G_{\eta_2,\mu_2}z  \Big\|^2 &= \EE_{\zeta}\EE_{u,v} \Big\|  \tilde \nabla_{yy}^2 G_{\eta_2,\mu_2}z  \Big\|^2 \\ \notag &\leq  c\mu_2^2 L_{2,G}^2m\left( m+n\right)^3\|z\|^2 + c m\left(m+n\right)\EE_\zeta \left[\left\|\nabla^2 G(\bar x,\bar y,\zeta)\right\|_F^2\right]\|z\|^2,
\\ \notag \notag &=  c\mu_2^2 L_{2,G}^2m\left( m+n\right)^3\|z\|^2 + c m\left(m+n\right)\left(\sigma_{2,G}^2 + \left\|\nabla^2 g(\bar x,\bar y)\right\|_F^2\right)\|z\|^2\\ 
 &\leq   c\mu_2^2 L_{2,G}^2m\left( m+n\right)^3\|z\|^2 + c m\left(m+n\right)^2\left(\frac{\sigma_{2,G}^2}{m+n} + L_{2,G}^2\right)\|z\|^2,\label{vHbound}
\end{align}
which is consistent with the $V_H$ expression in \eqref{def_VH}.

Finally, to find $\bar V$ we bound $\EE\|  \tilde \nabla_{yy}^2 G_{\eta_2,\mu_2}\bar z - \tilde \nabla F_{\eta_1,\mu_1}\|^2$ as follows
\begin{align}\notag 
    \EE\left\|  \tilde \nabla_{yy}^2 G_{\eta_2,\mu_2}\bar z - \tilde \nabla_y F_{\eta_1,\mu_1}\right \|^2 &= \mathbb{V}\left[ \tilde \nabla_{yy}^2 G_{\eta_2,\mu_2}\bar z - \tilde \nabla_y F_{\eta_1,\mu_1}\right]\\ \notag  &= \mathbb{V}\left[ \tilde \nabla_{yy}^2 G_{\eta_2,\mu_2}\bar z \right] + \mathbb{V}\left[  \tilde \nabla_y F_{\eta_1,\mu_1}\right] \\&\leq \EE\left\| \tilde \nabla_{yy}^2 G_{\eta_2,\mu_2}\bar z \right\|^2 + \EE\left\|  \tilde \nabla_y F_{\eta_1,\mu_1}\right\|^2 \label{vbar:expansion}
\end{align}
where the first equality is thanks to 
\[\EE\left[ \tilde \nabla_{yy}^2 G_{\eta_2,\mu_2}\bar z - \tilde \nabla_y F_{\eta_1,\mu_1}\right] = \nabla_{yy}^2 g_{\eta_2,\mu_2}(\bar x, \bar y)\bar z-  \nabla_y f_{\eta_1,\mu_1}(\bar x, \bar y) = 0,
\]
and the second equality is thanks to the independence of $\tilde \nabla_{yy}^2 G_{\eta_2,\mu_2}\bar z$ and $\tilde \nabla_y F_{\eta_1,\mu_1}$. The magnitude of $\bar z$ can be bounded as
\beq\notag 
\|\bar z\| = \left\|\left[\nabla_{yy}^2 g_{\eta_2,\mu_2}(\bar x, \bar y)\right]^{-1}  \nabla_y f_{\eta_1,\mu_1}(\bar x, \bar y)\right\|\leq \left\|\left[\nabla_{yy}^2 g_{\eta_2,\mu_2}(\bar x, \bar y)\right]^{-1} \right\| \left\| \nabla_y f_{\eta_1,\mu_1}(\bar x, \bar y)\right \|\leq \frac{L_{0,f}}{\lambda_g},
\eeq 
which combined with \eqref{vHbound} offers a bound for  the first term in \eqref{vbar:expansion} as
\begin{align}
\EE\Big\|  \tilde \nabla_{yy}^2 G_{\eta_2,\mu_2}\bar z  \Big\|^2 
 \leq   c \frac{\mu_2^2 L_{0,f}^2L_{2,G}^2}{\lambda_g^2}m\left( m+n\right)^3 + c\frac{L_{0,f}^2}{\lambda_g^2} m\left(m+n\right)^2\left(\frac{\sigma_{2,G}^2}{m+n} + L_{2,G}^2\right).\label{vbarfirstterm}
\end{align}

For the second term in \eqref{vbar:expansion}, setting $\eta_1=0$ in Corollary \ref{corMomentBounds} we obtain
\begin{align*}
\EE_{u',v'}  \left \|\tilde \nabla_y F_{0,\mu_1} \right \|^2  &\leq \mu_1^2 L_{1,F}^2(m+6)^3+  4(m+2) \|\nabla_y  F(\bar x, \bar y,\xi)\|^2\\ &\leq c'\mu_1^2 L_{1,F}^2m^3+  c'm \|\nabla_y  F(\bar x, \bar y,\xi)\|^2,
\end{align*}
where $c'$ is a sufficiently large constant. Taking an expectation of both sides of the inequality with respect to $\xi$ yields 
\begin{align}\notag 
\EE \left \|\tilde \nabla_y F_{\eta_1,\mu_1} \right \|^2  &= \EE_{\xi}\EE_{u',v'} \left \|\tilde \nabla_y F_{\eta_1,\mu_1} \right \|^2 \\ \notag &\leq  c'\mu_1^2 L_{1,F}^2 m^3 + c'm \EE_\xi\left[ \|\nabla  F(\bar x, \bar y,\xi)\|^2\right],
\\ \notag &=  c'\mu_1^2 L_{1,F}^2 m^3 + c'm \left(\sigma_{1,F}^2 + \left\|\nabla f(\bar x,\bar y)\right\|^2\right)
\\ \notag &\leq   c'\mu_1^2 L_{1,F}^2 m^3 + c'm \left(\sigma_{1,F}^2 + L_{0,f}^2\right).
\end{align}
Using this inequality along with \eqref{vbarfirstterm} in \eqref{vbar:expansion} verifies the advertised expression of $\bar V$ in \eqref{def_Vbar}. 



\begin{thebibliography}{10}

\bibitem{aghasi2012joint}
{\sc A.~Aghasi, I.~Mendoza-Sanchez, L.~Abriola, and E.~L. Miller}, {\em Joint
  electrical and hydrological inversion for reconstruction of subsurface
  contaminant source zones}, in 2012 IEEE International Geoscience and Remote
  Sensing Symposium, IEEE, 2012, pp.~7059--7062.

\bibitem{aghasi2013geometric}
{\sc A.~Aghasi, I.~Mendoza-Sanchez, E.~L. Miller, C.~A. Ramsburg, and L.~M.
  Abriola}, {\em A geometric approach to joint inversion with applications to
  contaminant source zone characterization}, Inverse problems, 29 (2013),
  p.~115014.

\bibitem{Aiyoshi1980HIERARCHICALDS}
{\sc E.~Aiyoshi and K.~Shimizu}, {\em Hierarchical decentralized system and its
  new solution by a barrier method.}, 1980.

\bibitem{bendsoe1995optimization}
{\sc M.~P. Bends{\o}e}, {\em Optimization of structural topology, shape, and
  material}, vol.~414, Springer, 1995.

\bibitem{bennett2008bilevel}
{\sc K.~P. Bennett, G.~Kunapuli, J.~Hu, and J.-S. Pang}, {\em Bilevel
  optimization and machine learning}, in IEEE world congress on computational
  intelligence, Springer, 2008, pp.~25--47.

\bibitem{borsos2020coresets}
{\sc Z.~Borsos, M.~Mutny, and A.~Krause}, {\em Coresets via bilevel
  optimization for continual learning and streaming}, Advances in Neural
  Information Processing Systems, 33 (2020), pp.~14879--14890.

\bibitem{Bracken1973MathematicalPW}
{\sc J.~Bracken and J.~T. McGill}, {\em Mathematical programs with optimization
  problems in the constraints}, Oper. Res., 21 (1973), pp.~37--44.

\bibitem{Case1998AnLP}
{\sc L.~M. Case}, {\em An l(1) penalty function approach to the nonlinear
  bilevel programming problem}, 1998.

\bibitem{chen2023fine}
{\sc J.~Chen, H.~Chen, B.~Gu, and H.~Deng}, {\em Fine-grained theoretical
  analysis of federated zeroth-order optimization}, in Thirty-seventh
  Conference on Neural Information Processing Systems, 2023.

\bibitem{chen2017zoo}
{\sc P.-Y. Chen, H.~Zhang, Y.~Sharma, J.~Yi, and C.-J. Hsieh}, {\em Zoo: Zeroth
  order optimization based black-box attacks to deep neural networks without
  training substitute models}, in Proceedings of the 10th ACM workshop on
  artificial intelligence and security, 2017, pp.~15--26.

\bibitem{chen2022learning}
{\sc T.~Chen, X.~Chen, W.~Chen, Z.~Wang, H.~Heaton, J.~Liu, and W.~Yin}, {\em
  Learning to optimize: A primer and a benchmark}, The Journal of Machine
  Learning Research, 23 (2022), pp.~8562--8620.

\bibitem{chen2021closing}
{\sc T.~Chen, Y.~Sun, and W.~Yin}, {\em Closing the gap: Tighter analysis of
  alternating stochastic gradient methods for bilevel problems}, in Advances in
  Neural Information Processing Systems, 2021.

\bibitem{chen2023optimal}
{\sc X.~Chen, T.~Xiao, and K.~Balasubramanian}, {\em Optimal algorithms for
  stochastic bilevel optimization under relaxed smoothness conditions}, 2023.

\bibitem{chen2017learning}
{\sc Y.~Chen, M.~W. Hoffman, S.~G. Colmenarejo, M.~Denil, T.~P. Lillicrap,
  M.~Botvinick, and N.~Freitas}, {\em Learning to learn without gradient
  descent by gradient descent}, in International Conference on Machine
  Learning, PMLR, 2017, pp.~748--756.

\bibitem{chen2022meta}
{\sc Y.~Chen, R.~Zhong, S.~Zha, G.~Karypis, and H.~He}, {\em Meta-learning via
  language model in-context tuning}, in Proceedings of the 60th Annual Meeting
  of the Association for Computational Linguistics (Volume 1: Long Papers),
  2022, pp.~719--730.

\bibitem{clegg2001bilevel}
{\sc J.~Clegg, M.~Smith, Y.~Xiang, and R.~Yarrow}, {\em Bilevel programming
  applied to optimising urban transportation}, Transportation Research Part B:
  Methodological, 35 (2001), pp.~41--70.

\bibitem{Colson2005ATM}
{\sc B.~Colson, P.~Marcotte, and G.~Savard}, {\em A trust-region method for
  nonlinear bilevel programming: Algorithm and computational experience},
  Computational Optimization and Applications, 30 (2005), pp.~211--227.

\bibitem{colson2007overview}
{\sc B.~Colson, P.~Marcotte, and G.~Savard}, {\em An overview of bilevel
  optimization}, Annals of operations research, 153 (2007), pp.~235--256.

\bibitem{conn2012bilevel}
{\sc A.~R. Conn and L.~N. Vicente}, {\em Bilevel derivative-free optimization
  and its application to robust optimization}, Optimization Methods and
  Software, 27 (2012), pp.~561--577.

\bibitem{dagrou2022a}
{\sc M.~Dagr{\'e}ou, P.~Ablin, S.~Vaiter, and T.~Moreau}, {\em A framework for
  bilevel optimization that enables stochastic and global variance reduction
  algorithms}, in Advances in Neural Information Processing Systems, 2022.

\bibitem{dhurandhar2022model}
{\sc A.~Dhurandhar, P.-Y. Chen, K.~Shanmugam, T.~Pedapati, A.~Balakrishnan, and
  R.~Puri}, {\em Model agnostic contrastive explanations for structured data},
  Nov.~22 2022.
\newblock US Patent 11,507,787.

\bibitem{dhurandhar2019model}
{\sc A.~Dhurandhar, T.~Pedapati, A.~Balakrishnan, P.-Y. Chen, K.~Shanmugam, and
  R.~Puri}, {\em Model agnostic contrastive explanations for structured data},
  2019.

\bibitem{du2019query}
{\sc J.~Du, H.~Zhang, J.~T. Zhou, Y.~Yang, and J.~Feng}, {\em Query-efficient
  meta attack to deep neural networks}, in International Conference on Learning
  Representations, 2019.

\bibitem{ehrhardt2021inexact}
{\sc M.~J. Ehrhardt and L.~Roberts}, {\em Inexact derivative-free optimization
  for bilevel learning}, Journal of Mathematical Imaging and Vision, 63 (2021),
  pp.~580--600.

\bibitem{Falk1995OnBP}
{\sc J.~E. Falk and J.~Liu}, {\em On bilevel programming, part i: General
  nonlinear cases}, Mathematical Programming, 70 (1995), pp.~47--72.

\bibitem{fang2022communication}
{\sc W.~Fang, Z.~Yu, Y.~Jiang, Y.~Shi, C.~N. Jones, and Y.~Zhou}, {\em
  Communication-efficient stochastic zeroth-order optimization for federated
  learning}, IEEE Transactions on Signal Processing, 70 (2022), pp.~5058--5073.

\bibitem{finn2017model}
{\sc C.~Finn, P.~Abbeel, and S.~Levine}, {\em Model-agnostic meta-learning for
  fast adaptation of deep networks}, in International conference on machine
  learning, PMLR, 2017, pp.~1126--1135.

\bibitem{franceschi2018bilevel}
{\sc L.~Franceschi, P.~Frasconi, S.~Salzo, R.~Grazzi, and M.~Pontil}, {\em
  Bilevel programming for hyperparameter optimization and meta-learning}, in
  International Conference on Machine Learning, PMLR, 2018.

\bibitem{GhadWang18}
{\sc S.~Ghadimi and M.~Wang}, {\em Approximation methods for bilevel
  programming}, arXiv:1802.02246,  (2018).

\bibitem{Gu-etal2021}
{\sc B.~Gu, G.~Liu, Y.~Zhang, X.~Geng, and H.~Huang}, {\em Optimizing
  large-scale hyperparameters via automated learning algorithm}, CoRR,
  abs/2102.09026 (2021).

\bibitem{Hansen1992NewBR}
{\sc P.~Hansen, B.~Jaumard, and G.~Savard}, {\em New branch-and-bound rules for
  linear bilevel programming}, SIAM J. Sci. Comput., 13 (1992), pp.~1194--1217.

\bibitem{herskovits2000contact}
{\sc J.~Herskovits, A.~Leontiev, G.~Dias, and G.~Santos}, {\em Contact shape
  optimization: a bilevel programming approach}, Structural and
  multidisciplinary optimization, 20 (2000), pp.~214--221.

\bibitem{hong2020two}
{\sc M.~Hong, H.-T. Wai, Z.~Wang, and Z.~Yang}, {\em A two-timescale framework
  for bilevel optimization: Complexity analysis and application to
  actor-critic}, arXiv:2007.05170,  (2020).

\bibitem{hospedales2021meta}
{\sc T.~Hospedales, A.~Antoniou, P.~Micaelli, and A.~Storkey}, {\em
  Meta-learning in neural networks: A survey}, IEEE transactions on pattern
  analysis and machine intelligence, 44 (2021), pp.~5149--5169.

\bibitem{hu2023gradient}
{\sc C.~Hu, X.~Zhang, and Q.~Wu}, {\em Gradient-free accelerated
  event-triggered scheme for constrained network optimization in smart grids},
  IEEE Transactions on Smart Grid,  (2023).

\bibitem{hutter2019automated}
{\sc F.~Hutter, L.~Kotthoff, and J.~Vanschoren}, {\em Automated machine
  learning: methods, systems, challenges}, Springer Nature, 2019.

\bibitem{ilyas2018black}
{\sc A.~Ilyas, L.~Engstrom, A.~Athalye, and J.~Lin}, {\em Black-box adversarial
  attacks with limited queries and information}, in International conference on
  machine learning, PMLR, 2018, pp.~2137--2146.

\bibitem{Ji2020BilevelOC}
{\sc K.~Ji, J.~Yang, and Y.~Liang}, {\em Bilevel optimization: Convergence
  analysis and enhanced design}, in International Conference on Machine
  Learning, 2020.

\bibitem{ji2021provably}
\leavevmode\vrule height 2pt depth -1.6pt width 23pt, {\em Provably faster
  algorithms for bilevel optimization and applications to meta-learning}, 2021.

\bibitem{Kolstad1990DerivativeEA}
{\sc C.~D. Kolstad and L.~S. Lasdon}, {\em Derivative evaluation and
  computational experience with large bilevel mathematical programs}, Journal
  of Optimization Theory and Applications, 65 (1990), pp.~485--499.

\bibitem{konda1999actor}
{\sc V.~Konda and J.~Tsitsiklis}, {\em Actor-critic algorithms}, Advances in
  neural information processing systems, 12 (1999).

\bibitem{kwon2023fully}
{\sc J.~Kwon, D.~Kwon, S.~Wright, and R.~Nowak}, {\em A fully first-order
  method for stochastic bilevel optimization}, 2023.

\bibitem{leblanc1986bilevel}
{\sc L.~J. LeBlanc and D.~E. Boyce}, {\em A bilevel programming algorithm for
  exact solution of the network design problem with user-optimal flows},
  Transportation Research Part B: Methodological, 20 (1986), pp.~259--265.

\bibitem{li2021zeroth}
{\sc Y.~Li, X.~Ren, F.~Zhao, and S.~Yang}, {\em A zeroth-order adaptive
  learning rate method to reduce cost of hyperparameter tuning for deep
  learning}, Applied Sciences, 11 (2021), p.~10184.

\bibitem{lian2019towards}
{\sc D.~Lian, Y.~Zheng, Y.~Xu, Y.~Lu, L.~Lin, P.~Zhao, J.~Huang, and S.~Gao},
  {\em Towards fast adaptation of neural architectures with meta learning}, in
  International Conference on Learning Representations, 2019.

\bibitem{liu2018darts}
{\sc H.~Liu, K.~Simonyan, and Y.~Yang}, {\em Darts: Differentiable architecture
  search}, arXiv:1806.09055,  (2018).

\bibitem{liu2020primer}
{\sc S.~Liu, P.-Y. Chen, B.~Kailkhura, G.~Zhang, A.~O. Hero~III, and P.~K.
  Varshney}, {\em A primer on zeroth-order optimization in signal processing
  and machine learning: Principals, recent advances, and applications}, IEEE
  Signal Processing Magazine, 37 (2020), pp.~43--54.

\bibitem{lu2016multilevel}
{\sc J.~Lu, J.~Han, Y.~Hu, and G.~Zhang}, {\em Multilevel decision-making: A
  survey}, Information Sciences, 346 (2016), pp.~463--487.

\bibitem{mackay2018selftuning}
{\sc M.~Mackay, P.~Vicol, J.~Lorraine, D.~Duvenaud, and R.~Grosse}, {\em
  Self-tuning networks: Bilevel optimization of hyperparameters using
  structured best-response functions}, in International Conference on Learning
  Representations, 2019.

\bibitem{magnus1978moments}
{\sc J.~R. Magnus et~al.}, {\em The moments of products of quadratic forms in
  normal variables}, Univ., Instituut voor Actuariaat en Econometrie, 1978.

\bibitem{marcotte1992efficient}
{\sc P.~Marcotte and G.~Marquis}, {\em Efficient implementation of heuristics
  for the continuous network design problem}, Annals of Operations Research, 34
  (1992), pp.~163--176.

\bibitem{mishra2018simple}
{\sc N.~Mishra, M.~Rohaninejad, X.~Chen, and P.~Abbeel}, {\em A simple neural
  attentive meta-learner}, in International Conference on Learning
  Representations, 2018.

\bibitem{nesterov2018lectures}
{\sc Y.~Nesterov et~al.}, {\em Lectures on convex optimization}, vol.~137,
  Springer, 2018.

\bibitem{nesterov2017random}
{\sc Y.~Nesterov and V.~Spokoiny}, {\em Random gradient-free minimization of
  convex functions}, Foundations of Computational Mathematics, 17 (2017),
  pp.~527--566.

\bibitem{nichol2018first}
{\sc A.~Nichol, J.~Achiam, and J.~Schulman}, {\em On first-order meta-learning
  algorithms}, arXiv:1803.02999,  (2018).

\bibitem{petersen2008matrix}
{\sc K.~B. Petersen, M.~S. Pedersen, et~al.}, {\em The matrix cookbook},
  Technical University of Denmark, 7 (2008), p.~510.

\bibitem{protter1985differentiation}
{\sc M.~H. Protter and C.~B. Morrey~Jr}, {\em Differentiation under the
  integral sign. improper integrals. the gamma function}, in Intermediate
  Calculus, Springer, 1985, pp.~421--453.

\bibitem{qiu2021finite}
{\sc S.~Qiu, Z.~Yang, J.~Ye, and Z.~Wang}, {\em On finite-time convergence of
  actor-critic algorithm}, IEEE Journal on Selected Areas in Information
  Theory, 2 (2021), pp.~652--664.

\bibitem{roghanian2007probabilistic}
{\sc E.~Roghanian, S.~J. Sadjadi, and M.-B. Aryanezhad}, {\em A probabilistic
  bi-level linear multi-objective programming problem to supply chain
  planning}, Applied Mathematics and computation, 188 (2007), pp.~786--800.

\bibitem{ruan2019learning}
{\sc Y.~Ruan, Y.~Xiong, S.~Reddi, S.~Kumar, and C.-J. Hsieh}, {\em Learning to
  learn by zeroth-order oracle}, in International Conference on Learning
  Representations, 2019.

\bibitem{sadigh2012manufacturer}
{\sc A.~N. Sadigh, M.~Mozafari, and B.~Karimi}, {\em Manufacturer--retailer
  supply chain coordination: A bi-level programming approach}, Advances in
  Engineering Software, 45 (2012), pp.~144--152.

\bibitem{santoro2016meta}
{\sc A.~Santoro, S.~Bartunov, M.~Botvinick, D.~Wierstra, and T.~Lillicrap},
  {\em Meta-learning with memory-augmented neural networks}, in International
  conference on machine learning, PMLR, 2016.

\bibitem{9533824}
{\sc A.~Shaker, F.~Alesiani, S.~Yu, and W.~Yin}, {\em Bilevel continual
  learning}, in 2021 International Joint Conference on Neural Networks (IJCNN),
  2021, pp.~1--8.

\bibitem{Shi2005AnEK}
{\sc C.~Shi, J.~Lu, and G.~Zhang}, {\em An extended kuhn-tucker approach for
  linear bilevel programming}, Appl. Math. Comput., 162 (2005), pp.~51--63.

\bibitem{shi2021numerical}
{\sc H.-J.~M. Shi, M.~Q. Xuan, F.~Oztoprak, and J.~Nocedal}, {\em On the
  numerical performance of derivative-free optimization methods based on
  finite-difference approximations}, arXiv preprint arXiv:2102.09762,  (2021).

\bibitem{snell2017prototypical}
{\sc J.~Snell, K.~Swersky, and R.~Zemel}, {\em Prototypical networks for
  few-shot learning}, Advances in neural information processing systems, 30
  (2017).

\bibitem{song2019maml}
{\sc X.~Song, W.~Gao, Y.~Yang, K.~Choromanski, A.~Pacchiano, and Y.~Tang}, {\em
  Es-maml: Simple hessian-free meta learning}, in International Conference on
  Learning Representations, 2019.

\bibitem{sow2022convergence}
{\sc D.~Sow, K.~Ji, and Y.~Liang}, {\em On the convergence theory for
  hessian-free bilevel algorithms}, Advances in Neural Information Processing
  Systems, 35 (2022), pp.~4136--4149.

\bibitem{stein1972bound}
{\sc C.~Stein}, {\em A bound for the error in the normal approximation to the
  distribution of a sum of dependent random variables}, in Proceedings of the
  Sixth Berkeley Symposium on Mathematical Statistics and Probability, Volume
  2: Probability Theory, vol.~6, University of California Press, 1972,
  pp.~583--603.

\bibitem{stein1981estimation}
{\sc C.~M. Stein}, {\em Estimation of the mean of a multivariate normal
  distribution}, The annals of Statistics,  (1981), pp.~1135--1151.

\bibitem{tsai2020transfer}
{\sc Y.-Y. Tsai, P.-Y. Chen, and T.-Y. Ho}, {\em Transfer learning without
  knowing: Reprogramming black-box machine learning models with scarce data and
  limited resources}, in International Conference on Machine Learning, PMLR,
  2020, pp.~9614--9624.

\bibitem{tu2019autozoom}
{\sc C.-C. Tu, P.~Ting, P.-Y. Chen, S.~Liu, H.~Zhang, J.~Yi, C.-J. Hsieh, and
  S.-M. Cheng}, {\em Autozoom: Autoencoder-based zeroth order optimization
  method for attacking black-box neural networks}, in Proceedings of the AAAI
  Conference on Artificial Intelligence, vol.~33, 2019, pp.~742--749.

\bibitem{van2022data}
{\sc D.~van~de Berg, T.~Savage, P.~Petsagkourakis, D.~Zhang, N.~Shah, and E.~A.
  del Rio-Chanona}, {\em Data-driven optimization for process systems
  engineering applications}, Chemical Engineering Science, 248 (2022),
  p.~117135.

\bibitem{vemula2019contrasting}
{\sc A.~Vemula, W.~Sun, and J.~Bagnell}, {\em Contrasting exploration in
  parameter and action space: A zeroth-order optimization perspective}, in The
  22nd International Conference on Artificial Intelligence and Statistics,
  PMLR, 2019, pp.~2926--2935.

\bibitem{vinyals2016matching}
{\sc O.~Vinyals, C.~Blundell, T.~Lillicrap, D.~Wierstra, et~al.}, {\em Matching
  networks for one shot learning}, Advances in neural information processing
  systems, 29 (2016).

\bibitem{wang2020m}
{\sc J.~Wang, J.~Wu, H.~Bai, and J.~Cheng}, {\em M-nas: Meta neural
  architecture search}, in Proceedings of the AAAI Conference on Artificial
  Intelligence, vol.~34, 2020, pp.~6186--6193.

\bibitem{wang2022zarts}
{\sc X.~Wang, W.~Guo, J.~Su, X.~Yang, and J.~Yan}, {\em Zarts: On zero-order
  optimization for neural architecture search}, Advances in Neural Information
  Processing Systems, 35 (2022), pp.~12868--12880.

\bibitem{xie2023zo}
{\sc L.~Xie, K.~Huang, F.~Xu, and Q.~Shi}, {\em Zo-darts: Differentiable
  architecture search with zeroth-order approximation}, in ICASSP 2023-2023
  IEEE International Conference on Acoustics, Speech and Signal Processing
  (ICASSP), IEEE, 2023, pp.~1--5.

\bibitem{yang2023achieving}
{\sc Y.~Yang, P.~Xiao, and K.~Ji}, {\em Achieving ${O}(\epsilon^{-1.5})$
  complexity in hessian/jacobian-free stochastic bilevel optimization}, 2023.

\bibitem{yang2019provably}
{\sc Z.~Yang, Y.~Chen, M.~Hong, and Z.~Wang}, {\em Provably global convergence
  of actor-critic: A case for linear quadratic regulator with ergodic cost},
  Advances in neural information processing systems, 32 (2019).

\bibitem{yao2018taking}
{\sc Q.~Yao, M.~Wang, Y.~Chen, W.~Dai, Y.-F. Li, W.-W. Tu, Q.~Yang, and Y.~Yu},
  {\em Taking human out of learning applications: A survey on automated machine
  learning}, arXiv:1810.13306,  (2018).

\bibitem{Ye2022BOMEBO}
{\sc M.~Ye, B.~Liu, S.~J. Wright, P.~Stone, and Q.~Liu}, {\em Bome! bilevel
  optimization made easy: A simple first-order approach}, Advances in Neural
  Information Processing Systems, 35 (2022), p.~17248–17262.

\bibitem{yin2002multiobjective}
{\sc Y.~Yin}, {\em Multiobjective bilevel optimization for transportation
  planning and management problems}, Journal of advanced transportation, 36
  (2002), pp.~93--105.

\bibitem{zhang2020bi}
{\sc H.~Zhang, W.~Chen, Z.~Huang, M.~Li, Y.~Yang, W.~Zhang, and J.~Wang}, {\em
  Bi-level actor-critic for multi-agent coordination}, in Proceedings of the
  AAAI Conference on Artificial Intelligence, vol.~34, 2020, pp.~7325--7332.

\bibitem{zhang2021directional}
{\sc J.~Zhang, S.~Bi, and G.~Zhang}, {\em A directional gaussian smoothing
  optimization method for computational inverse design in nanophotonics},
  Materials \& Design, 197 (2021), p.~109213.

\bibitem{zhang2021robustify}
{\sc Y.~Zhang, Y.~Yao, J.~Jia, J.~Yi, M.~Hong, S.~Chang, and S.~Liu}, {\em How
  to robustify black-box ml models? a zeroth-order optimization perspective},
  in International Conference on Learning Representations, 2021.

\end{thebibliography}
\end{document}